\documentclass[Afour,sageh,times]{sagej}

\usepackage{amsmath,amsfonts,amsthm}
\usepackage{amssymb}
\usepackage{prettyref}
\usepackage{mathrsfs}
\usepackage{graphicx}
\usepackage{wrapfig}
\usepackage{subfig}
\usepackage{framed}
\usepackage{multirow}
\usepackage{booktabs}
\usepackage{algorithm}
\usepackage[table]{xcolor}
\usepackage[export]{adjustbox}
\usepackage[noend]{algpseudocode}
\usepackage{enumitem}
\usepackage{comment}
\usepackage[colorlinks,allcolors=gray,hypertexnames=true]{hyperref} 

\usepackage[nomessages]{fp}
\usepackage[overload]{empheq}

\newtheorem{theorem}{Theorem}
\newtheorem{lemma}[theorem]{\TE{Lemma}}
\newtheorem{remark}[theorem]{\TE{Remark}}
\newtheorem{assume}[theorem]{\TE{Assumption}}

\newtheorem{corollary}[theorem]{\TE{Corollary}}

\algnewcommand{\LineComment}[1]{\State \(\triangleright\) #1}
\algdef{SE}[DOWHILE]{Do}{doWhile}{\algorithmicdo}[1]
{\algorithmicwhile\ #1}
\newcommand*{\colorboxed}{}
\def\colorboxed#1#{%
  \colorboxedAux{#1}%
}
\newcommand*{\colorboxedAux}[3]{%
  \begingroup
    \colorlet{cb@saved}{.}%
    \color#1{#2}%
    \boxed{%
      \color{cb@saved}%
      #3%
    }%
  \endgroup
}

\newrefformat{fig}{Figure~\ref{#1}}
\newrefformat{par}{Section~\ref{#1}}
\newrefformat{appen}{Appendix~\ref{#1}}
\newrefformat{sec}{Section~\ref{#1}}
\newrefformat{sub}{Section~\ref{#1}}
\newrefformat{table}{Table~\ref{#1}}
\newrefformat{ass}{Assumption~\ref{#1}}
\newrefformat{alg}{Algorithm~\ref{#1}}
\newrefformat{def}{Definition~\ref{#1}}
\newrefformat{rem}{Remark~\ref{#1}}
\newrefformat{thm}{Theorem~\ref{#1}}
\newrefformat{lem}{Lemma~\ref{#1}}
\newrefformat{cor}{Corollary~\ref{#1}}
\newrefformat{step}{Step~\ref{#1}}
\newrefformat{ln}{Line~\ref{#1}}
\newrefformat{eq}{Equation~\ref{#1}}
\newrefformat{pb}{Problem~\ref{#1}}
\newrefformat{it}{Item~\ref{#1}}
\newrefformat{te}{Term~\ref{#1}}
\def\Eqref Eq:#1:{\eqref{eq:#1}}
\newrefformat{Eq}{Equation~\Eqref#1:}

\newcommand{\TE}[1]{\textbf{#1}}

\newcommand{\FPP}[2]{\frac{\partial{#1}}{\partial{#2}}}
\newcommand{\FPPR}[2]{{\partial{#1}}/{\partial{#2}}}

\newcommand{\TWO}[2]{\left(\setlength{\arraycolsep}{1pt}\begin{array}{cc}{#1}, & {#2}\end{array}\right)}
\newcommand{\TWOC}[2]{\left(\setlength{\arraycolsep}{1pt}\begin{array}{c}#1 \\ #2\end{array}\right)}

\newcommand{\THREE}[3]{\left(\setlength{\arraycolsep}{1pt}\begin{array}{ccc}{#1}, & {#2}, & {#3}\end{array}\right)}

\newcommand{\FOUR}[4]{\left(\setlength{\arraycolsep}{1pt}\begin{array}{cccc}{#1}, & {#2}, & {#3}, & {#4}\end{array}\right)}

\newcommand{\SIX}[6]{\left(\setlength{\arraycolsep}{1pt}\begin{array}{cccccc}{#1}, & {#2}, & {#3}, & {#4}, & {#5}, & {#6}\end{array}\right)}

\newcommand{\SIXR}[6]{\left(\setlength{\arraycolsep}{1pt}\begin{array}{cccccc}{#1}^T, & {#2}^T, & {#3}^T, & {#4}^T, & {#5}^T, & {#6}^T\end{array}\right)^T}

\newcommand{\dist}{\text{dist}}

\newcommand{\argmin}[1]{\underset{#1}{\text{argmin}}}

\newcommand{\ST}{\text{s.t.}}

\newcommand{\TWORCell}[2]{\begin{tabular}{@{}l@{}}#1 \\ #2\end{tabular}}


\newcommand{\rad}{\rho}
\newcommand{\dom}{\text{dom}}

\newcommand{\logdet}{\text{logdet}}
\newcommand{\CH}{\text{CH}}
\newcommand{\CLIP}{\text{CLIP}}

\newcommand\numeq[1]{\stackrel{\scriptscriptstyle(\mkern-1.5mu#1\mkern-1.5mu)}{=}}
\newcommand\numleq[1]{\stackrel{\scriptscriptstyle(\mkern-1.5mu#1\mkern-1.5mu)}{\leq}}
\newcommand\numgeq[1]{\stackrel{\scriptscriptstyle(\mkern-1.5mu#1\mkern-1.5mu)}{\geq}}

\newcommand{\proofread}[1]{}
\newif\ifArxiv
\usepackage{xcolor}
\definecolor{Blue}{rgb}{0.2, 0.2, 0.8}
\definecolor{Black}{rgb}{0.0, 0.0, 0.0}

\newcommand\BibTeX{{\rmfamily B\kern-.05em \textsc{i\kern-.025em b}\kern-.08em
T\kern-.1667em\lower.7ex\hbox{E}\kern-.125emX}}

\begin{document}

\runninghead{Zherong and Kui}

\newif\iflong
\longtrue

\newif\ifreview
\reviewtrue

\title{BC-ADMM: A Parallel Decoupled Non-convex Constrained Optimizer for Robot Applications}

\author{Zherong Pan and Kui Wu\affilnum{1}}

\affiliation{\affilnum{1}LIGHTSPEED}

\email{\{zherong.pan.usa@gmail.com, walker.kui.wu@gmail.com\}}

\begin{abstract}
Non-convex constrained optimizations are ubiquitous in robotic applications such as multi-agent navigation, UAV trajectory optimization, and soft robot simulation. As a common feature in these problems, the associated non-convex constraints, including collision constraints, inversion-free constraints, and strain limits, are also non-smooth with ill-defined gradients. It is well-known that such constraints are notoriously difficult to handle, for which off-the-shelf optimizers can fail catastrophically. Instead, prior works tend to design problem-specific optimizers that trade performance for robustness.
To efficiently solve this problem class in a unified manner, we propose a variant of Alternating Direction Method of Multiplier (ADMM), called BC-ADMM. Over the past decade, ADMM has achieved great success in efficiently solving many large-scale (constrained) optimization problems by decoupling them into sub-problems that can be solved in parallel. However, prior ADMM algorithms do not have convergence guarantee when dealing with a large number of non-convex constraints with loopy constraint graphs. Instead, our BC-ADMM relaxes each non-convex constraint into a bi-convex function, further breaking the constraint into two sub-problems. We show that such relaxation leads to both theoretical convergence speed guarantees and practical convergence guarantees in the asymptotic sense. Through numerical experiments in a row of four robotic applications, we show that BC-ADMM has faster convergence than conventional gradient descent and Newton's method in terms of wall clock time.
\end{abstract}

\keywords{Constrained Optimization, Multi-agent Navigation, UAV Trajectory Optimization, Robot Simulation}

\maketitle
\section{Introduction}
Non-convex constrained optimization problems are pervasive in robotics, particularly in areas such as geometric processing, motion planning, soft-robot modeling, and multi-agent navigation. These challenges commonly arise in applications like deformable body simulation~\cite{lipson2014challenges,kovalsky2015large}, collision-free trajectory optimization~\cite{zhang2020optimization}, and multi-robot coordination~\cite{guy2009clearpath}, among others. Solving such problems is notoriously difficult for two main reasons. First, they often involve large-scale formulations with a high number of decision variables and constraints. For instance, the number of variables can grow superlinearly with mesh resolution. In multi-agent navigation, the number of collision constraints increases quadratically with the number of agents-reaching on the order of $10^4$ constraints for $10^2$ agents in our evaluations. Similarly, in soft robot modeling using the finite element method, the number of elements can reach $10^3$, while collision and strain-limiting constraints may scale up to $10^6$. Second, many of these non-convex constraints are also non-smooth, often lacking well-defined gradients. A common example is the collision avoidance constraint. To illustrate this challenge, consider the simple scenario shown in~\prettyref{fig:non-smooth}, where a circular robot attempts to reach a target position while avoiding a triangle-shaped obstacle. This problem can be formulated as the following optimization:
\begin{equation}
\begin{aligned}
\label{eq:non-smooth}
\argmin{x_1}\;&\|x_1-x_1^\star\|^2\\
\ST\quad&\text{inf}_{x'\in\CH(x_2,x_3,x_4)}\|x_1-x'\|\geq r,
\end{aligned}
\end{equation}
which involves a single non-convex collision constraint. It is well-known that this constraint function is non-smooth due to the point-to-set distance function $\text{inf}_{x'\in\CH(x_2,x_3,x_4)}\|x_1-x'\|$. Indeed, as illustrated in~\prettyref{fig:non-smooth-cases}, such a distance function has a vanishing gradient when $x_1\in\CH^\circ(x_2,x_3,x_4)$ where $\CH^\circ$ indicates the interior of the convex hull, but a unit-norm directional derivative exists when $x_1\notin\CH^\circ(x_2,x_3,x_4)$.

\begin{figure}[ht]
\centering
\includegraphics[width=.84\linewidth]{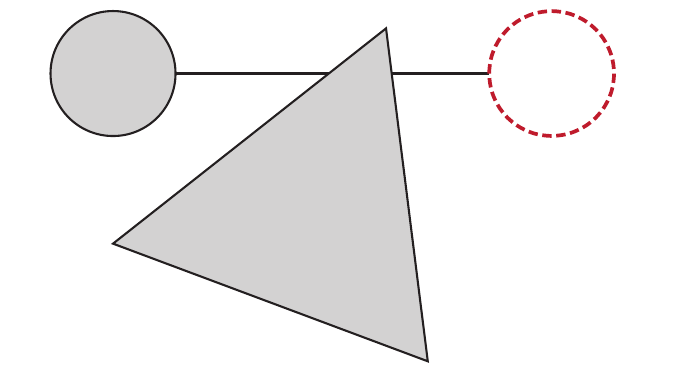}
\put(-170,83){$x_1$}
\put(-40,83){$x_1^\star$}
\put(-88,10){$x_2$}
\put(-160,37){$x_3$}
\put(-97,88){$x_4$}
\caption{\label{fig:non-smooth} We optimize the position of a circular robot with a radius $r$ under collision constraint, which is located at $x_1\in\mathbb{R}^2$. The robot needs to reach the goal position $x_1^\star$, while avoiding the triangular obstacle spanned by its three vertices $x_2,x_3,x_4$. We can formulate this problem as an optimization with the collision constraint $\text{inf}_{x'\in\CH(x_2,x_3,x_4)}\|x_1-x'\|\geq r$. Here $\CH$ denotes the closed convex set spanned by a set of vertices.}
\end{figure}

Solving constrained optimizations with non-smooth, non-convex constraint functions is notoriously difficult for general-purpose optimization algorithms, such as sequential quadratic programming and primal-dual interior point method~\cite{nocedal1999numerical}, as implemented in mature software packages SNOPT~\cite{gill2005snopt} and IPOPT~\cite{wachter2002interior}. All these algorithms are infeasible point optimizers, meaning that they do not assume the intermediary solutions during an optimization are strictly feasible. Instead, they rely on the gradient of constraints to be non-vanishing, which could be used to recover feasibility from infeasible initial guesses. Unfortunately, such non-vanishing-gradient assumptions on the constraint functions are frequently violated in practical problems, e.g. when $x_1\in\CH^\circ(x_2,x_3,x_4)$ in~\prettyref{eq:non-smooth}. The violation of these assumptions can lead to catastrophic failure of infeasible point optimizers by getting stuck at an infeasible solution, as analyzed in~\cite{schulman2014motion,byravan2014space}.

\begin{figure}[ht]
\centering
\includegraphics[width=.84\linewidth]{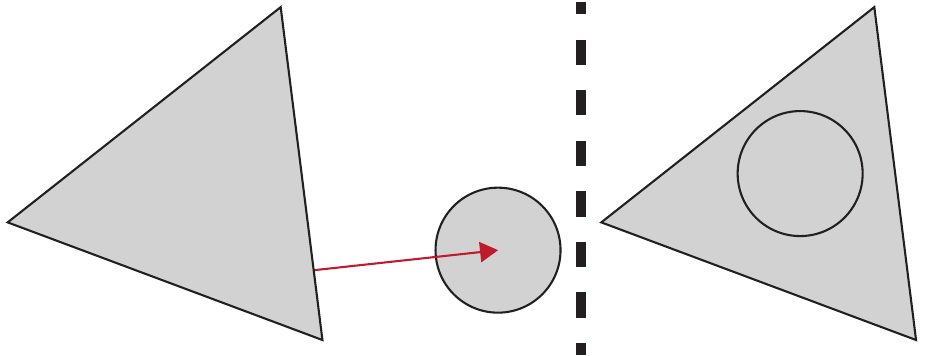}
\put(-98,14){$x_1$}
\put(-33,35){$x_1$}
\put(-190,-10){$x_1\notin\CH^\circ(x_2,x_3,x_4)$}
\put(-70,-10){$x_1\in\CH^\circ(x_2,x_3,x_4)$}
\caption{\label{fig:non-smooth-cases} We illustrate two cases of the non-smooth collision constraint function. Left: When $x_1\notin\CH^\circ(x_2,x_3,x_4)$, the function is locally differentiable with a gradient (red arrow) satisfying $\left\|\FPPR{\left[\text{inf}_{x'\in\CH(x_2,x_3,x_4)}\|x_1-x'\|-r\right]}{x_1}\right\|=1$. Right: When $x_1\in\CH^\circ(x_2,x_3,x_4)$, then gradient vanishes.}
\end{figure}
To address the difficulty of infeasible point optimizers, researchers have noticed that many non-smooth constraints are indeed sufficiently smooth in the feasible domain. For example, the point-to-set distance function in~\prettyref{eq:non-smooth} is differentiable with unit-norm gradient when restricted to its feasible domain. Therefore, several works have proposed to use a feasible interior point method by using a barrier function to restrict the solution in its feasible domain. In the example of~\prettyref{eq:non-smooth}, the constrained optimization is transformed into the following unconstrained optimization, which can then be solved using gradient descend or Newton's method:
\begin{equation}
\begin{aligned}
\label{eq:non-smooth-feasible}
\argmin{x}\;\|x-x^\star\|^2-
\log_\epsilon\left[\text{inf}_{y\in\CH(y_1,y_2,y_3)}\|x-y\|-r\right]
\end{aligned}
\end{equation}
with $\log_\epsilon$ being a locally supported variant of log-barrier function depending on a hyperparameter $\epsilon$. Starting from a feasible initial guess, the feasible interior point method maintains constraint feasibility and thus avoids the issue of non-smoothness. It can be shown that the function $\log_\epsilon$ can be designed such that the local solution of~\prettyref{eq:non-smooth-feasible} is also a local solution of~\prettyref{eq:non-smooth} as $\epsilon$ tends to zero. Variants of such feasible interior point methods have been proposed to solve problems in deformable body simulation~\cite{li2020incremental,10478182}, collision-free trajectory optimization~\cite{ni2022robust,10505800}, and multi-robot navigation~\cite{van2008reciprocal,karamouzas17}. These works use problem-specific barrier functions to achieve the ideal balance between computational efficacy, robustness, and solution quality. However, despite their robustness, the computational efficacy has been compromised as compared with the infeasible point methods~\cite{gill2005snopt,wachter2002interior}. This is because the optimizer needs to take very small step sizes so that none of the constraints are violated. Typically, the global step size is upper bounded by the single, most stringent constraint.

In this work, we propose an unified structure-aware optimizer to improve the computational efficacy of feasible interior point method. Unlike black-box numerical algorithms, structure-aware optimizers~\cite{chang2014multi,jiang2019structured,khatana2022dc} exploit the properties in objective functions and accelerate computation. Recently, Alternating Direction Method of Multipliers (ADMM), a prominent structure-aware optimization algorithm, has been extensively applied to general physics simulation~\cite{overby2017admmpd}, geometric processing~\cite{ouyang2020anderson}, motion planning~\cite{ni2022robust} and multi-agent navigation~\cite{Saravanos-RSS-21} problems. These methods utilize the fact that objective functions and constraints can be decomposed into terms related to small subsets of variables, leading to a set of smaller sub-problems that can be solved efficiently and in parallel. The solutions of these sub-problems are then summarized into a unified set of variables, forming a consensus~\cite{boyd2011distributed}. Unlike the infeasible interior point method, where the entire solution is updated using a global step size, ADMM solves each sub-problem independently, allowing them to take much larger step sizes. Further, by allowing these sub-problems to be solved in parallel, ADMM enjoys a low iterative cost, allowing it to quickly converge to an approximate solution for large problem instances.

\begin{figure}[ht]
\centering
\includegraphics[width=.84\linewidth]{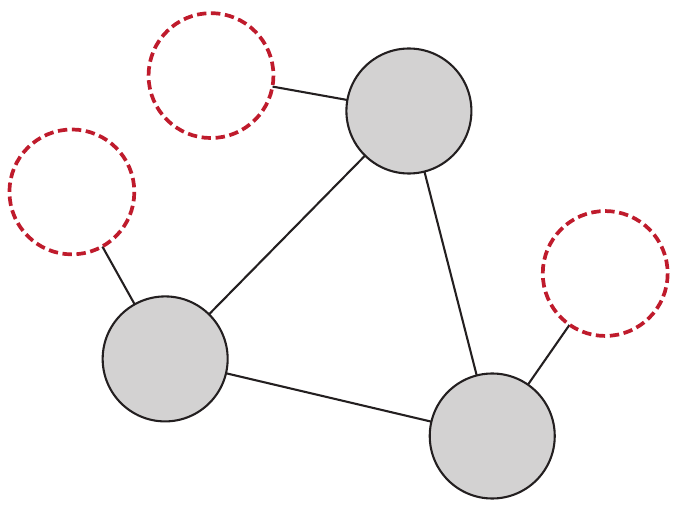}
\put(-60,19){$x_1$}
\put(-155,42){$x_2$}
\put(-82,115){$x_3$}
\put(-27,66){$x_1^\star$}
\put(-183,90){$x_2^\star$}
\put(-142,125){$x_3^\star$}
\put(-131,20){$\tilde{P}_r(x_1,x_2)$}
\put(-157,90){$\tilde{P}_r(x_2,x_3)$}
\put(-114,60){$\tilde{P}_r(x_3,x_1)$}
\caption{\label{fig:fail}We optimize the position of three circular robots with a uniform radius $r$ under collision constraints, where their position variables are denoted as $x_{1,2,3}\in\mathbb{R}^2$ concatenated into a decision variable $x\in\mathbb{R}^6$. The objective function $f:\mathbb{R}^6\to\mathbb{R}^+$ is designed for the three robots to reach their distinctive goals $x_i^\star$, which is defined as $\sum_{i=1}^3\|x_i-x_i^\star\|^2$. Each collision constraint is modeled as an extended-real log-barrier function $\tilde{P}\in\mathbb{R}^4\to\mathbb{R}^+\cup\{\infty\}$ defined as $\tilde{P}_r(x_i,x_j)=-\log_\epsilon(\|x_i-x_j\|-2r)$, with $\log_\epsilon$ being a locally supported variant of log-barreir function. Altogether, we need three barrier functions $\tilde{P}_r(x_1,x_2)$, $\tilde{P}_r(x_2,x_3)$, and $\tilde{P}_r(x_3,x_1)$, of which the corresponding constraint graph has a loop. Unfortunately, existing ADMM algorithm does not have convergence guarantee for this problem.}
\end{figure}
However, we observe that when the sub-problems imply non-convex constraints and the connectivity between constraints has loops, then existing structured optimization algorithms are not guaranteed to converge. Unfortunately, such problems are ubiquitous in all the aforementioned paradigms. An illustrative toy example is provided in~\prettyref{fig:fail} where we optimize collision-free poses for 3 robots. Variants of this problem have been considered in prior works~\cite{van2008reciprocal,karamouzas17}.  In this case, collision constraints exist between each pair of the 3 robots. If we consider each robot as the node of a graph, and each collision constraint is an edge connecting the pair of robots potentially in collision, then the constraint graph has a loop. Surprisingly, existing convergence analyses of the ADMM algorithm can fail for loopy constraint graphs. This limitation prevents ADMM from being adopted in a wide range of robotic applications.

\begin{figure}[ht]
\centering
\includegraphics[width=.84\linewidth]{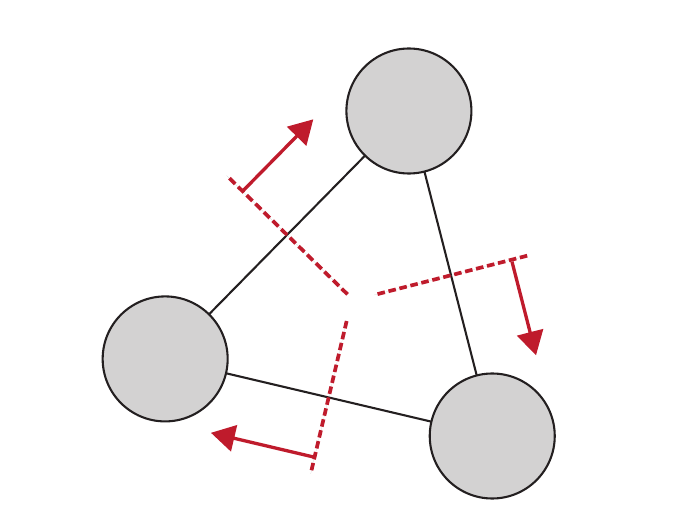}
\put(-60,19){$x_1$}
\put(-155,42){$x_2$}
\put(-82,115){$x_3$}
\put(-135,110){$n_{23}$}
\put(-148,90){$d_{23}$}
\put(-43,60){$n_{13}$}
\put(-50,82){$d_{13}$}
\put(-140,10){$n_{12}$}
\put(-105,10){$d_{12}$}
\caption{\label{fig:fail-reformulation} We can reformulate the log-barrier function $\tilde{P}_r(x_i,x_j)$ in~\prettyref{fig:fail} into an augmented function $P_r(x_i,x_j,n_{ij},d_{ij})$. This is done by introducing separating planes between the pair of robots $x_i$ and $x_j$ and require that the two robots lie on different sides of the separating plane, with the normals and offsets of these separating planes (red) denoted as $n_{ij}$ and $d_{ij}$, respectively. We cancatenate all these variables into $z=\SIXR{n_{12}}{d_{12}}{n_{23}}{d_{23}}{n_{31}}{d_{31}}$. By proper design, we can make the function $P_r$ convex in $\TWO{x_i}{x_j}$ and in $\TWO{n_{ij}}{d_{ij}}$ but not both. We refer readers to~\prettyref{sec:collision-potential} for more details on the design of the function $P_r$.}
\end{figure}
In this work, we propose a variant of ADMM called Biconvex ADMM (BC-ADMM). For a special kind of non-convex optimization that endows a bi-convex relaxation, our algorithm takes as input a strictly feasible solution and maintains its feasibility during the optimization. BC-ADMM adopts the same algorithm pipeline as standard distributed ADMM~\cite{boyd2011distributed}. Under proper parameter choices, we show that our algorithm has an oracle complexity of $O(\epsilon^{-2})$. Further, by maintaining the feasibility of solutions, BC-ADMM pertains the desirable property similar to anytime motion planner~\cite{5980479}, aka. it allows the algorithm to be terminated anytime while returning a feasible solution. We propose practical implementations of our method and show that a large class of known optimization problems in motion planning, robot simulation, and trajectory optimization problems can be modeled after our framework. In the case of~\prettyref{fig:fail}, we can relax the collision constraint using the celebrated hyperplane separation theorem by inserting a separating plane between each pair of potentially colliding robots as illustrated in~\prettyref{fig:fail-reformulation}. Our \TE{main contributions} are summarized below:
\begin{itemize}[leftmargin=*]
\item A bi-convex relaxation of a class of non-smooth optimization problems, including \TE{motion planning, soft-robot simulation, multi-agent navigation, and trajectory optimization}.
\item A provably robust variant of ADMM for solving the relaxed constrained optimization.
\end{itemize}
\section{Related Work}
In this section, we first review the development and limitations of general-purpose constraint optimizers. Next, we review related works on structured optimization techniques. Finally, we discuss several applications in the robotic community, which involve optimization problems that can be handled by our BC-ADMM.

\subsection{General-purpose Constrained Optimizations}
General-purpose constrained optimization solvers are designed to be applicable for a large problem class with no assumptions on the underlying structures of objective and constraint functions. Typically, they only require these functions to be (twice-)differentiable. Algorithms for constrained optimization include the augmented Lagrangian method, the sequential quadratic programming, and the interior point method. These algorithms have been built into mature software packages, such as LANCELOT~\cite{conn2013lancelot}, SNOPT~\cite{gill2005snopt}, and IPOPT~\cite{wachter2002interior}, and are widely used in optimization problems from robotic applications. However, their convergence to feasible solutions relies on strong assumptions on the Jacobian of constraint functions, which are known as constraint qualifications~\cite{boggs1995sequential,birgin2008improving}. These constraint qualifications ensure that an infeasible solution can be guided back to the feasible domain by the gradients of the constraint functions. Unfortunately, it is non-trivial to check and ensure that constraint qualifications are satisfied for practical problems. Even worse, in problems as illustrated in~\prettyref{fig:non-smooth}, constraint qualifications are known to fail in many cases. All these factors lead to the lack of robustness in using general-purpose constrained optimizers with failure cases observed and analyzed in several prior works~\cite{schulman2014motion,byravan2014space,pan2022environment}.

Recently, it has been discovered that several non-smooth constraint functions are indeed smooth when restricted to their feasible domain, which includes collision constraints~\cite{li2020incremental,karamouzas17,ni2021robust}, inversion-free constraints~\cite{fu2016computing}, and strain limits~\cite{li2020incremental}. Researchers thus propose to use a feasible interior point method to restrict the solution to its feasible domain, yielding much better robustness than their infeasible counterpart. However, such robustness comes at the cost of computational efficacy~\cite{nocedal1999numerical}. This is because the feasible interior point method needs to take very small step sizes to ensure all the constraints are satisfied after moving the solution. As a result, the step sizes are typically upper-bounded by the most-stringent constraint.

\subsection{Structured Optimization Algorithms}
Compared with general-purpose algorithms, structured optimizers exploit additional structures in the objective and constraint functions to significantly boost the performance. For example, when both objective and constraints are convex, then globally optimal solutions can be found via local optimization~\cite{boyd2004convex}. When the objective function is multi-convex~\cite{xu2013block}, then block coordinate descent algorithms can be used with guaranteed convergence. When the objective function is a submodular quadratic function, then binary optimization can be solved efficiently using graph-cut. In our work, we propose a variant of ADMM, which is yet another class of structured optimization algorithms. ADMM works by breaking a large-scale problem into many small problems that can be solved in a numerically stable and efficient manner. Early works on ADMM focus on convex optimization problems~\cite{boyd2011distributed,deng2016global}. Over the years, the convergence of ADMM has been confirmed for a much wider class of constrained optimization problems with non-convex objectives and possibly non-convex constraints (see e.g.~\cite{wang2019global}). The massively parallel nature of ADMM makes it a stellar fit for efficiently solving large-scale problem instances. Indeed, ADMM has been adopted to handle collision and joint constraints for both physics simulation~\cite{overby2017admmpd} and geometric processing~\cite{zhang2019accelerating}. However, we notice that state-of-the-art ADMM convergence analysis requires the associated constraint graph to contain no loops, which is the focus of this work.

\subsection{Applications of BC-ADMM}
We discuss several robotic applications of our BC-ADMM, and we show that all these methods use the feasible interior point method, which suffers from small step sizes.

\paragraph{Multi-agent Navigation}
In early works, navigating agent motions are simulated using force-based models~\cite{reynolds1987flocks} or continuum models~\cite{hughes2002continuum}. However, these methods suffer from numerical stability issues and require careful parameter tuning to generate smooth and collision-free agent motions. Velocity-based methods~\cite{van2008reciprocal} are then proposed to introduce hard velocity constraints between a pair of agents to resolve collisions. But these constraints are solved for each agent separately, which limits the allowed range of collision-free motions. Most recently, \citet{karamouzas17} draws connections with optimization-based numerical integrators and introduces a formulation that jointly optimizes agent positions at the next timestep under pairwise barrier functions to prevent collisions between agents.

\paragraph{UAV Trajectory Optimization}
Generating collision-free, executable, and optimal UAV trajectories has been an active area of robotic research~\cite{deits2015efficient,zhou2019robust,8462878,ni2021robust,ni2022robust}. Among other technical disparities, these formulations vary in their techniques for handling the collision constraints. The non-convex collision constraints can be relaxed into a disjoint union of convex constraints~\cite{deits2015efficient,8462878} by pre-computing a set of convex corridor shapes, under which the globally optimal trajectory can be found by solving a mixed-integer convex program. On the other hand, \citet{8462878} proposed to introduce non-convex soft penalty functions to search for locally optimal collision-free trajectories, without utilizing safe corridors. \citet{ni2021robust} further enhanced the soft-penalty function into a log-barrier function, providing rigorous collision-free guarantees. Unsurprisingly, their underlying feasible interior point optimizer suffers from the small step size. In view of this drawback, follow-up works~\cite{ni2022robust,wang2020alternating} propose to decompose the optimization problem into sub-problems and adopt alternating minimization. However, none of these algorithms can decouple the collision constraints into sub-problems, which significantly limits the potential of alternating minimization.

\paragraph{Deformable Object \& Soft Robot Simulation}
Deformable objects are ubiquitous in robotic manipulation tasks. The simulation, manipulation, and control of deformable bodies have been an active area of research for decades~\cite{rus2015design}. In particular, various formulations have been proposed to model the dynamics of deformable objects, including mass-spring systems~\cite{huang2020dynamic,baraff2023large}, As-Rigid-As-Possible (ARAP) deformations~\cite{sorkine2007rigid,kwok2017gdfe,fang2018geometry}, and the finite element method~\cite{duriez2013control,qin2024modeling}. Whichever formulation is adopted, a practical deformable body simulator needs to handle various non-convex constraints, of which the two most common constraints stem from collision and strain limits~\cite{goldenthal2007efficient,wang2010multi,kim2012comparison}. The strain limits are introduced to model the behavior where a soft body hardens infinitely under large forces to prevent compression or stretching beyond the material limit. These hard constraints can prevent the underlying optimizer from taking large step sizes and making faster progress. This issue has been partially addressed by prior works~\cite{liu2013fast,overby2017admmpd}. Unfortunately, these formulations either cannot handle both types of constraints or do not have convergence guarantees.
\section{Problem Formulation}
We begin the construction of BC-ADMM by considering the unconstrained optimization problem induced from the feasible interior point method. Considering the following optimization corresponding to the toy problem in~\prettyref{fig:fail}:
\begin{equation}
\begin{aligned}
\label{eq:fail}
\argmin{x}\;&\sum_{i=1}^3\|x_i-x_i^\star\|^2+\\
&\tilde{P}_r(x_1,x_2)+
\tilde{P}_r(x_2,x_3)+
\tilde{P}_r(x_3,x_1),
\end{aligned}
\end{equation}
the three barrier potential terms each model a collision constraint between a pair of robots. However, if solve~\prettyref{eq:fail} using a global step size, then the step size must be upper bounded by the most stringent constraint, which oftentimes hinders convergence speed. Instead, ADMM works by decoupling the constraint barrier functions from the objective function. Specifically, we define the objective function as $f(x)=\sum_{i=1}^3\|x_i-x_i^\star\|^2$ and the barrier functions are summarized into another function:
\begin{align}
\label{eq:fail-g}
\tilde{g}(y)=\tilde{P}_r(y_1,y_2)+
\tilde{P}_r(y_3,y_4)+
\tilde{P}_r(y_5,y_6),
\end{align}
with $y\in\mathbb{R}^{12}$. ADMM then works by reformulating~\prettyref{eq:fail} into the following equivalent form:
\begin{align}
\label{eq:nlp}
\argmin{x}\;f(x)+\tilde{g}(Ax),
\end{align}
which is our main problem under consideration with a general matrix $A$. In our toy problem, the matrix is defined as:
\begin{align}
A=\left(\setlength{\arraycolsep}{1pt}\begin{array}{cccccc}
I & 0 & 0 & 0 & I & 0 \\ 
0 & I & I & 0 & 0 & 0 \\
0 & 0 & 0 & I & 0 & I \\
\end{array}\right)^T\in\mathbb{R}^{12\times6}.
\end{align}
We further take the following assumption for~\prettyref{eq:nlp}:
\begin{assume}
i) $f:\mathbb{R}^n\to\mathbb{R}^+\cup\{\infty\}$ is continuous and differentiable in $\dom(f)$ and $\tilde{g}:\mathbb{R}^m\to\mathbb{R}^+\cup\{\infty\}$ is continuous and twice continuously differentiable in $\dom(\tilde{g})$. ii) $f(\bullet)+\tilde{g}(A\bullet)$ is coercive. iii) A strictly feasible $x^0\in\mathbb{R}^n$ is given, such that $f(x^0)+\tilde{g}(Ax^0)<\infty$.
\end{assume}
Many underlying problems in geometric processing, physics simulation, and trajectory generation satisfy our assumptions. For example, in inversion-free deformation, $f$ is a quadratic objective function, $\tilde{g}$ is the elastic energy with log-barrier functions to limit the strain, and the rest shape of the deformable body is strictly feasible and known. For multi-agent trajectory generation, $f$ is the trajectory smoothness objective, $\tilde{g}$ is the collision-free constraint, and the initial state of agents is collision-free and known.
\begin{remark}
The two properties of function $\tilde{g}$ are both critical to the convergence analysis of our algorithm. As an example, the function that is frequently considered in our applications is the log-barrier function $\tilde{g}(x)=-\log(x)$ (or its locally supported variant $\tilde{g}(x)=-\log_\epsilon(x)$ with similar properties), as in illustrative problem of~\prettyref{fig:fail}. This function has $\dom(\tilde{g})=\{x|x>0\}$ in which it is smooth and thus twice continuously differentiable. In addition, it is continuous in the entire domain $\mathbb{R}$. As a counter-example, we consider the function $\tilde{g}(x)=\iota_{\{x|x>0\}}(x)$, where $\iota_\mathcal{Z}(z)$ is the indicator function, which takes value $0$ when $z\in\mathcal{Z}$ and $\infty$ otherwise. We again have $\dom(\tilde{g})=\{x|x>0\}$ in which it is constant and thus twice continuously differentiable. However, this function is not continuous at $x=0$.
\end{remark}

\subsection{ADMM Relaxation}
To adopt structured optimization, ADMM reformulates~\prettyref{eq:nlp} using a slack variable $y$ into the following equivalent form:
\begin{align}
\label{eq:rnlp}
\argmin{x,y}f(x)+\tilde{g}(y)\quad\ST\;Ax=y.
\end{align}
The above reformulation allows ADMM to introduce the following augmented Lagrangian function associated with~\prettyref{eq:rnlp}:
\begin{equation}
\begin{aligned}
\mathcal{L}(x,y,\lambda)&=f(x)+\tilde{g}(y) \\
&+\frac{\beta}{2}\|Ax-y\|^2+\lambda^T(Ax-y).
\end{aligned}
\end{equation}
It is well-known that any solution of~\prettyref{eq:rnlp} corresponds to the critical point of the Lagrangian function in all the parameters $x$, $y$, and $\lambda$. The Lagrangian function is widely used to gauge convergence in the augmented Lagrangian method~\cite{conn2013lancelot} and sequential quadratic programming~\cite{boggs1995sequential}, and we refer readers to these works for more discussions. As a key difference from these methods, however, ADMM searches for the critical point in a decoupled manner by updating parameters $x$, $y$, and $\lambda$ sequentially. The main benefit of this strategy is that each subproblem has a small size and can be solved extremely efficiently. Taking the toy function $\tilde{g}$ from~\prettyref{eq:fail-g} for example, the $y$ update can be decomposed into three subproblems related to $\TWO{y_1}{y_2}$, $\TWO{y_3}{y_4}$, and $\TWO{y_5}{y_6}$, respectively, which can be solved in parallel.

For solving~\prettyref{eq:rnlp}, several variants of ADMM algorithms~\cite{deng2016global,jiang2019structured,themelis2020douglas} have been developed. However, we show that these algorithms all assume additional properties on the problem data that do not hold in the aforementioned problem instances. In particular,~\citet{deng2016global} assumed that $\tilde{g}$ is convex, which is not the case with collision or strain limits. \citet{jiang2019structured,themelis2020douglas} either assumed that $A$ has full row rank, which implies that there is always a consensus $x$ satisfying all the constraints as long as each constraint is satisfied in a separate manner, or they assume that $\tilde{g}$ is a real instead of an extended real function, which implies that $\tilde{g}$ cannot be used to model constraints. This is not the case with non-trivial problem instances. In the case of~\prettyref{fig:fail}, again for example, $A$ cannot have full row rank. Further, $\tilde{g}$ is a non-convex extended real function. As summarized in~\prettyref{table:ADMM-summary}, the existing ADMM is not a suitable solver for~\prettyref{eq:rnlp} even if we know that the problem is strictly feasible.
\begin{table}[h]
\centering
\scalebox{0.8}{
\begin{tabular}{cccc}
\toprule
Method & $f$ & $g$ & \TWORCell{Additional}{Assumption} \\
\midrule
\cite{deng2016global} & CVX,ER & CVX,ER & F \\
\cite{jiang2019structured} & NC,ER & NC,R & F  \\
\cite{themelis2020douglas} & NC,R & NC,ER & F,S \\
Ours & NC,ER & BCVX,ER & F \\
\bottomrule
\end{tabular}}
\caption{\label{table:ADMM-summary} Assumptions on the problem data in different methods. (NC: Nonconvex, CVX: Convex, BCVX: Bi-convex, R: Real Function, ER: Extended Real Function, F: Feasible solution exists, S: $A$ is Surjective and has full row rank) Our method allows the solution of problems where $g$ can be non-convex, extended-real and $A$ does not have full row rank.}
\end{table}

\subsection{Bi-convex Relaxation \& BC-ADMM}
To overcome the limitations of existing ADMM variants, BC-ADMM introduces a bi-convex relaxation. In the toy example shown in~\prettyref{fig:fail}, we demonstrate how the collision constraints can be reformulated using the hyperplane separation theorem, as illustrated in~\prettyref{fig:fail-reformulation}. This reformulation introduces a new set of variables representing the separating planes, which we concatenate into a vector $z$. While this increases the overall number of decision variables, it also induces a special structure in the problem, which forms the basis of our convergence analysis. In the case of~\prettyref{fig:fail-reformulation}, we define:
\begin{equation}
\begin{aligned}
z=&\SIXR{n_{12}}{d_{12}}{n_{23}}{d_{23}}{n_{31}}{d_{31}}\\
g(y,z)=&P_r(y_1,y_2,n_{12},d_{12})+\\
&P_r(y_2,y_3,n_{23},d_{23})+\\
&P_r(y_3,y_1,n_{31},d_{31}).
\end{aligned}
\end{equation}
In summary, we consider the bi-convex variant of~\prettyref{eq:nlp} with a slack variable $z$ as follows:
\begin{align}
\label{eq:snlp}
\argmin{x,z\in\mathcal{Z}}f(x)+g(Ax,z),
\end{align}
where we take the following additional assumption:
\begin{assume}
\label{ass:bivariable-case1}
i) $f:\mathbb{R}^n\to\mathbb{R}^+\cup\{\infty\}$ is continuous and differentiable in $\dom(f)$ and $g:\mathbb{R}^{m+l}\to\mathbb{R}^+\cup\{\infty\}$ is continuous and twice continuously differentiable in $\dom(g)$. ii) $f(\bullet)+g(A\bullet,\bullet)$ is coercive. iii) A strictly feasible solution $x^0\in\mathbb{R}^n,z^0\in\mathcal{Z}$ is given, such that $f(x^0)+g(Ax^0,z^0)<\infty$. iv) $g(\bullet,z)$ is convex,  $g(y,\bullet)$ is $\sigma$-strongly convex, and $\mathcal{Z}$ is convex and non-empty.
\end{assume}
In other words, the non-convexity of $g$ is encoded in the additional variable $z$, and $g$ is convex otherwise. We will show that a large class of problems has such relaxations. In our BC-ADMM algorithm, we relax~\prettyref{eq:snlp} similar to the standard ADMM algorithm and introduce a slack variable $y$, giving the following problem:
\begin{align}
\label{eq:sadmm}
\argmin{x,y,z}f(x)+g(y,z)\quad\ST\;Ax=y.
\end{align}
BC-ADMM then proceeds by introducing the following Lagrangian function:
\begin{equation}
\begin{aligned}
&\mathcal{L}(x,y,z,\lambda)\\
=&f(x)+g(y,z)+\frac{\beta}{2}\|Ax-y\|^2+\lambda^T(Ax-y).
\end{aligned}
\end{equation}
Finally, BC-ADMM updates $x,y,z,\lambda$ iteratively by the following rule, with superscript denoting the iteration number:
\begin{small}
\begin{subequations}
\begin{align}[left={\rotatebox[origin=c]{90}{$\text{BC-ADMM}$}\empheqlbrace}]
y^0=&Ax^0\quad
z^0=z(y^0)\quad
\lambda^0=\nabla_yg(y^0,z^0)\\
\label{eq:step-a}
x^{k+1}=&\argmin{x}\mathcal{L}(x,y^k,z^k,\lambda^k)+\frac{\beta_x}{2}\|x-x^k\|^2\\
\label{eq:step-b}
y^{k+1}=&\argmin{y}\mathcal{L}(x^{k+1},y,z^k,\lambda^k)+\frac{\beta_y}{2}\|y-y^k\|^2\\
\label{eq:step-c}
z^{k+1}=&\argmin{z\in\mathcal{Z}}\mathcal{L}(x^{k+1},y^{k+1},z,\lambda^k)\\
\label{eq:step-d}
\lambda^{k+1}=&\lambda^k+\beta(Ax^{k+1}-y^{k+1}).
\end{align}
\end{subequations}
\end{small}
Here, for~\prettyref{eq:step-a}, we assume that a locally optimal solution can be computed starting from the initial guess of $x^k$. For~\prettyref{eq:step-b} and~\prettyref{eq:step-c}, the locally optimal solution is also global due to convexity. Indeed, since $g$ is $\sigma$-strongly convex in $z$, we know that $z$ is the unique minimizer of the function $g$, which depends only on $y$. Due to the uniqueness, we can write $z$ as a function of $y$ denoted as $z(y)$. Note that the benefit of ADMM is retained in our bi-convex relaxation. Indeed, the update of $y$ and $z$ can both be done by solving small problems in parallel. Taking the toy problem of~\prettyref{eq:fail} for example, the $z$ update corresponds to adjusting the separating plane for each collision constraint, which can be decomposed into subproblems related to $\TWO{n_{12}}{d_{12}}$, $\TWO{n_{23}}{d_{23}}$, and $\TWO{n_{31}}{d_{31}}$, respectively.
\begin{remark}
\label{rem:well-definedness}
Note that the two steps~\prettyref{eq:step-b} and~\prettyref{eq:step-c} might not be well-defined because we only assume the function $g$ is proper as a function of $\TWO{y}{z}$ but $g(\bullet,z^k)$ or $g(y^{k+1},\bullet)$ might not be proper, meaning that $y^{k+1}$ and $z^{k+1}$ might not have a finite solution. But we can easily see that BC-ADMM is well-defined by induction. As our base step, note that $g(y^0,z^0)<\infty$ by our initial condition with $z^0\in\mathcal{Z}$. As our inductive step, suppose $g(y^k,z^k)<\infty$ with $z^k\in\mathcal{Z}$, then since~\prettyref{eq:step-b} is minimizing the objective, we have $g(y^{k+1},z^k)<\infty$. Further, since~\prettyref{eq:step-c} is minimizing the objective, we have $g(y^{k+1},z^{k+1})<\infty$.
\end{remark}

\prettyref{rem:well-definedness} implies that BC-ADMM can be practically implemented without encountering infinite values or improper functions during each subproblem solve. However, we are still unclear whether solving the series of subproblems would in turn solve the problem in~\prettyref{eq:sadmm}, which is the main goal of our analysis. To this end, we follow the standard analysis technique and treat the subproblem solvers in~\prettyref{eq:step-b}, ~\ref{eq:step-c}, and~\ref{eq:step-d} as oracles, while we analyze the number of BC-ADMM iterations needed until convergence. We will show that, by a proper choice of positive parameters $\beta,\beta_x,\beta_y$, each iteration generated by BC-ADMM will be strictly feasible and the algorithm converges with an oracle complexity of $O(\epsilon^{-2})$. The result is formally claimed below and proved in \iflong \prettyref{sec:convergence}. \else our extended paper. \fi
\begin{theorem}
\label{thm:BC-ADMM-case1}
Taking \iflong \prettyref{ass:bivariable} \else \prettyref{ass:bivariable-case1} \fi and under sufficiently large $\beta_x,\beta_y,\beta$, each iteration generated by BC-ADMM satisfies $f(x^k)+g(Ax^k,z^k)<\infty$. Further, BC-ADMM converges the $\epsilon$-stationary solution of~\prettyref{eq:snlp} with an oracle complexity of $O(\epsilon^{-2})$.
\end{theorem}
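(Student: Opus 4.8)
The plan is to run the standard ``sufficient-decrease plus bounded-below'' argument for non-convex ADMM, but with the bi-convexity of $g$ playing the role that is usually filled by a smoothness/Lipschitz-gradient hypothesis on one block. First I would establish a \emph{dual-control} estimate: because $\lambda^0=\nabla_y g(y^0,z^0)$ and each $y$-update \prettyref{eq:step-b} is an unconstrained minimization of a function whose $y$-gradient is $\nabla_y g(y^{k+1},z^k) - \lambda^k + \beta(Ax^{k+1}-y^{k+1}) + \beta_y(y^{k+1}-y^k)$, the optimality condition gives $\lambda^{k+1} = \nabla_y g(y^{k+1},z^k) + \beta_y(y^{k+1}-y^k)$. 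Combined with the $z$-update \prettyref{eq:step-c} and a first-order expansion of $\nabla_y g$ in its second argument (here I use that $g$ is twice continuously differentiable on $\dom(g)$, together with the coercivity/feasibility of the iterates to stay on a compact sublevel set where the Hessian is bounded), this bounds $\|\lambda^{k+1}-\lambda^k\|$ by a constant times $\|y^{k+1}-y^k\| + \|z^{k+1}-z^k\|$ plus lower-order terms. This is the crucial inequality that lets the dual ascent in \prettyref{eq:step-d} be absorbed by the primal decrease.

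Next I would show the augmented Lagrangian is monotonically decreasing along the iterates. Step \prettyref{eq:step-a} decreases $\mathcal{L}$ by at least $\tfrac{\beta_x}{2}\|x^{k+1}-x^k\|^2$ (it is an exact proximal minimization, even if only a local minimizer is reached, by comparing with $x=x^k$); step \prettyref{eq:step-b} decreases it by at least $\tfrac{\beta_y}{2}\|y^{k+1}-y^k\|^2$ plus a convexity term; step \prettyref{eq:step-c} decreases it by at least $\tfrac{\sigma}{2}\|z^{k+1}-z^k\|^2$ using the $\sigma$-strong convexity of $g(y,\bullet)$ and convexity of $\mathcal{Z}$; and the $\lambda$-update \emph{increases} $\mathcal{L}$ by exactly $\tfrac1\beta\|\lambda^{k+1}-\lambda^k\|^2$. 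Plugging the dual-control estimate into this last term, the net change in $\mathcal{L}$ over one full iteration is bounded above by $-c_1\|x^{k+1}-x^k\|^2 - c_2\|y^{k+1}-y^k\|^2 - c_3\|z^{k+1}-z^k\|^2$ for positive constants $c_1,c_2,c_3$, provided $\beta_y$ and $\beta$ are taken large enough relative to $\sigma$ and the Hessian bound of $g$ (and $\beta_x>0$ arbitrary). Here I would also verify $\mathcal{L}(x^k,y^k,z^k,\lambda^k)$ is bounded below: rewrite $\mathcal{L} = f(x^k)+g(y^k,z^k) + \tfrac{\beta}{2}\|Ax^k-y^k+\lambda^k/\beta\|^2 - \tfrac{1}{2\beta}\|\lambda^k\|^2$, use $\lambda^k=\nabla_y g(y^k,z^{k-1})+\beta_y(y^k-y^{k-1})$ together with convexity of $g(\bullet,z)$ to dominate the $-\|\lambda^k\|^2/(2\beta)$ term by $g$ itself, and invoke coercivity of $f(\bullet)+g(A\bullet,\bullet)$; feasibility $f(x^k)+g(Ax^k,z^k)<\infty$ for all $k$ follows from \prettyref{rem:well-definedness} applied inductively, and the sublevel-set boundedness of $(x^k,y^k,z^k)$ follows from coercivity once $\mathcal{L}$ is shown bounded.

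With a monotone, bounded-below Lyapunov function whose decrease controls the squared successive differences, telescoping over $k=0,\dots,K-1$ gives $\sum_k \big(\|x^{k+1}-x^k\|^2+\|y^{k+1}-y^k\|^2+\|z^{k+1}-z^k\|^2\big) \le \big(\mathcal{L}^0-\inf\mathcal{L}\big)/c$, hence $\min_{k<K}$ of the summand is $O(1/K)$, i.e. the successive differences are $O(K^{-1/2})$. Translating this into an $\epsilon$-stationarity certificate for \prettyref{eq:snlp} is the last step: the stationarity residual of the ADMM subproblem optimality conditions — namely $\nabla f(x^{k+1})+A^T\lambda^{k+1}$ (from \prettyref{eq:step-a}, modulo the $\beta_x$ proximal term), $\nabla_y g(y^{k+1},z^k)-\lambda^{k+1}$, the $z$-subgradient condition on $\mathcal{Z}$, and the primal residual $Ax^{k+1}-y^{k+1}=(\lambda^{k+1}-\lambda^k)/\beta$ — is in each case bounded by a constant times the successive differences (again invoking the dual-control estimate and the Hessian bound of $g$ to pass from $\nabla_y g(y^{k+1},z^k)$ to $\nabla_y g(y^{k+1},z^{k+1})$). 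Therefore after $O(\epsilon^{-2})$ iterations some iterate has all residuals below $\epsilon$, which is exactly an $\epsilon$-stationary point of \prettyref{eq:snlp}.

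The main obstacle I anticipate is the dual-control estimate under the \emph{extended-real} nature of $g$: $\nabla_y g$ need not be globally Lipschitz, and the subproblems can a priori drive iterates toward $\partial\dom(g)$ where the Hessian blows up. The argument must therefore be bootstrapped — one first shows the iterates remain in a fixed compact subset of $\dom(g)$ strictly away from the boundary (using that $\mathcal{L}$ decreases from a finite initial value and that $g\to\infty$ near $\partial\dom(g)$, a consequence of barrier-type behavior implied by the feasibility assumption), and only then are the Hessian bounds and hence the $O(\epsilon^{-2})$ rate valid. Making this boundary-avoidance rigorous, and pinning down how large $\beta_x,\beta_y,\beta$ must be as explicit functions of $\sigma$ and the compact-set Hessian bound, is where the real work lies; the rest is the by-now-standard non-convex ADMM bookkeeping.
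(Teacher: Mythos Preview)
Your high-level plan---sufficient decrease of a Lyapunov function, lower-boundedness, telescoping, then reading off an $O(\epsilon^{-2})$ rate---is exactly the architecture the paper uses. But the bootstrap you propose to handle the extended-real nature of $g$ is circular as written, and the paper resolves this circularity by a device you are missing: the Whitney extension. Concretely, the paper replaces $g$ by a real-valued function $G_{r,M}$ that coincides with $g$ (up to second derivatives) on a compact set $\mathcal{S}(r,M)\subset\dom(g)$ but has a \emph{globally} $L_G^{r,M}$-Lipschitz gradient (\prettyref{lem:extension}). It then analyzes a modified algorithm BC-ADMM$_G$ with $g\leftarrow G_{r,M}$, for which the Lyapunov decrease holds unconditionally. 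Only afterward does a careful inductive $\delta$-margin argument (\prettyref{lem:margin}, \prettyref{lem:same-sequence}) show the BC-ADMM$_G$ iterates never leave the set where $G_{r,M}=g$, so the two algorithms coincide. Your plan instead assumes the iterates stay on a compact sublevel set in order to get a Hessian bound, then uses the Hessian bound to prove the Lyapunov decrease that keeps the iterates on the sublevel set---this is the circularity, and ``$g\to\infty$ near $\partial\dom(g)$'' alone does not break it, because the $y$-subproblem minimizer $y^{k+1}$ could in principle jump far from $y^k$ before you have established any step-size control. Relatedly, your claim that $f(x^k)+g(Ax^k,z^k)<\infty$ follows from \prettyref{rem:well-definedness} is incorrect: that remark only gives $g(y^k,z^k)<\infty$, and since $Ax^k\neq y^k$ in general, finiteness at $(Ax^k,z^k)$ is genuinely nontrivial and is obtained in the paper precisely through the margin argument of \prettyref{lem:same-sequence}.

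A second, smaller gap: your dual-control estimate is off by one index. From the $y$-optimality condition you have $\lambda^{k+1}=\nabla_y g(y^{k+1},z^k)+\beta_y(y^{k+1}-y^k)$ and $\lambda^k=\nabla_y g(y^k,z^{k-1})+\beta_y(y^k-y^{k-1})$, so $\|\lambda^{k+1}-\lambda^k\|$ involves $\|y^k-y^{k-1}\|$ and $\|z^k-z^{k-1}\|$, not $\|z^{k+1}-z^k\|$. Because of this lag, $\mathcal{L}$ itself is not monotone; the paper augments it to $\Psi_{r,M}=\mathcal{L}_{r,M}+c\|y^k-y^{k-1}\|^2$ (\prettyref{lem:Lyapunov}) before the decrease estimate goes through. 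You would need the same augmentation, and the $\|z^{k+1}-z^k\|$ term you obtain from $\sigma$-strong convexity in step~\prettyref{eq:step-c} does not directly absorb the lagged $\|z^k-z^{k-1}\|$ contribution---in the paper this is handled by the Lipschitz estimate $\|z^k-z^{k-1}\|\le L_{zy}\|y^k-y^{k-1}\|$ (\prettyref{lem:zy}), which is then folded into the augmented Lyapunov.
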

\TE{Sketch of Proof.} To establish~\prettyref{thm:BC-ADMM-case1}, we propose a novel analysis technique. Specifically, we note that, when the function $g$ is not an extended-real function, but rather a real function with Lipschitz continuous gradient, then~\prettyref{eq:sadmm} takes a similar form as the prior setting~\cite{jiang2019structured}. Therefore, we propose to replace $g$ with another function $G$, such that $G$ is a real function with Lipschitz continuous gradient. Further, we make sure that the two functions, $g$ and $G$, match up to second derivatives in a sufficiently large compact set, as illustrated in~\prettyref{fig:whitney}. Such a function $G$ is guaranteed to exist by the Whitney extension theorem~\cite{whitney1992analytic}. With the function $g$ replaced by $G$, we derive a modified BC-ADMM algorithm denoted as BC-ADMM$_G$, for which a slightly modified version of the analysis in~\cite{jiang2019structured} can be applied to establish convergence. Finally, we show that BC-ADMM$_G$ generates a solution trajectory that lies in the compact domain where $g=G$, which establishes the convergence of BC-ADMM.
\begin{figure}[ht]
\centering
\includegraphics[width=0.99\linewidth]{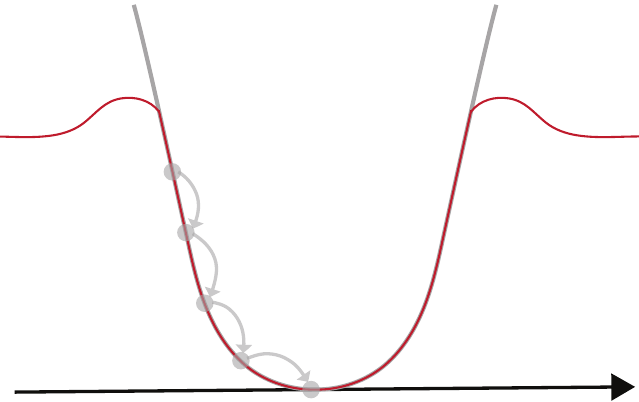}
\put(-180,140){$g$}
\put(-240,105){$G$}
\put(-20,10){$x$}
\put(-155,50){$\FOUR{x^k}{y^k}{z^k}{\lambda^k}$}
\caption{\label{fig:whitney} An illustration of our technique of analysis. We replace the extended real function $g$ (gray) with a real function $G$ with Lipschitz continuous gradient (red). With $g$ replaced by $G$, we derive a modified BC-ADMM algorithm denoted as BC-ADMM$_G$. The solution sequence generated by BC-ADMM$_G$ (gray dots and arrows), is restricted to the compact set where $g=G$, which establishes the convergence of BC-ADMM.}
\end{figure}

\prettyref{thm:BC-ADMM-case1} extends prior ADMM algorithms to solve optimizations under a large number of constraints in the form of soft barrier functions, while achieving the same order of convergence rate of optimizing general non-convex functions using the first-order methods~\cite{jiang2019structured}. However, we emphasize that the parameter choices for $\beta_x,\beta_y,\beta$ in our analysis are not defined in a constructive manner. In other words, we cannot design an algorithm to compute these parameters that achieves the claimed convergence speed. To tackle this difficulty, we propose a practical algorithm in the next section, which is guaranteed to converge under arbitrary choices of these parameters.
\section{Practical Algorithm and Applications}
Our analytic results require sufficiently large $\beta_x,\beta_y,\beta$, which cannot be algorithmically determined. Therefore, we need a practical algorithm that can automatically detect the parameters. In this section, we combine several approaches to detect these parameters, leading to an efficient and robust implementation of our method with an additional assumption that $f$ is convex. 

\subsection{Practical Algorithm}
Our practical implementation is summarized in~\prettyref{alg:practical}, where we adopt several approaches to ensure its robustness, anytime feasible property, and efficacy. First, to ensure that our method guarantees convergence without carefully tuning its parameters, we propose to use a lazy strategy to update $z^{k+1}$. Note that when $z^{k+1}$ is fixed, i.e.~\prettyref{eq:step-c} is not executed, BC-ADMM reduces to the standard ADMM applied to convex optimization when $f$ is convex. With $z$ fixed, a wide range of parameters leads to convergence of $x$ to the (globally) optimal solution of $f(\bullet)+g(A\bullet,z)$~\cite{deng2017parallel}. Therefore, we propose to only update $z^{k+1}$ when $x^{k+1}$ sufficiently reduces the objective function level and the optimal solution of the convex objective is sufficiently approached. Specifically, we require the objective function value to be reduced by at least $(1-\eta)(g(y^{k+1},z^{k+1})-g(y^{k+1},z^\star))$. We further require that the convergence measure $\Phi^{k+1}$ of the convex optimization has a norm less than $\eta^K$. We will show that these conditions will be ultimately met and $z^{k+1}$ will be updated. Further, our lazy update allows us to track a monotonically decreasing best solution denoted as $\langle x,z\rangle^\star$. This strategy is outlined in~\prettyref{ln:lazy-z}. In practice, we find that BC-ADMM converges even without these conditions, so we choose to be less stringent and set $\eta$ very close to $1$. 

Second, without sufficiently large parameters, we are not guaranteed to return a feasible solution upon early termination. To ensure the anytime feasible property, we propose to keep track of the best solution, denoted as $x^\star,z^\star$. When a solution is detected to be infeasible, we restart from $x^\star,z^\star$, while increasing $\beta_y,\beta$ by a constant factor of $\kappa_y$ and $\kappa$. Note that the factor $\kappa$ we use for $\beta$ is strictly larger than the factor $\kappa_y$ for $\beta_y$. This is because our analysis requires $\beta$ to be much larger than $\beta_y$. This strategy is outlined in~\prettyref{ln:feasibility}. Combining the strategies above, we argue for the asymptotic convergence of~\prettyref{alg:practical} in the following result, which is proved in \iflong \prettyref{sec:practical-convergence}. \else our extended paper. \fi
\begin{theorem}
\label{thm:asymptotic}
Assume that new constraints have never been detected at~\prettyref{ln:detector}. Taking~\prettyref{ass:bivariable} and assuming $f$ is convex,~\prettyref{alg:practical} converges to the $\epsilon$-stationary solution of~\prettyref{eq:snlp}.
\end{theorem}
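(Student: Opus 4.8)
The plan is to show that the two "laziness" conditions guarding the $z$-update in \prettyref{alg:practical} are eventually satisfied, so that $z^{k+1}$ is updated infinitely often, and that between consecutive $z$-updates the restart-and-increase mechanism drives $\beta_x,\beta_y,\beta$ into the regime where \prettyref{thm:BC-ADMM-case1} applies. I would organize the argument around two nested claims. The outer claim is that the "feasibility restart" at \prettyref{ln:feasibility} can only be triggered finitely many times: each restart multiplies $\beta_y$ by $\kappa_y$ and $\beta$ by $\kappa>\kappa_y$, so after finitely many restarts the triple $(\beta_x,\beta_y,\beta)$ exceeds the (non-constructive but finite) thresholds from \prettyref{thm:BC-ADMM-case1}; once the parameters are large enough, \prettyref{thm:BC-ADMM-case1} guarantees that every iterate stays in $\dom(f)\times\dom(g)$, i.e. feasible, so no further restart occurs. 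Here I would use that $f$ is convex (the extra standing assumption) together with coercivity (\prettyref{ass:bivariable}(ii)) to ensure the best-solution bookkeeping $\langle x,z\rangle^\star$ is well-defined and monotonically non-increasing, so restarting from $x^\star,z^\star$ never loses progress.

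The inner claim, carried out with $z$ held fixed between updates, is the convergence of plain ADMM on the convex problem $\min_x f(x)+g(Ax,z^k)$. Since $f$ is convex and $g(\bullet,z^k)$ is convex (\prettyref{ass:bivariable}(iv)), the $x$- and $y$-updates \prettyref{eq:step-a}–\prettyref{eq:step-b} with the proximal terms $\tfrac{\beta_x}{2}\|x-x^k\|^2$, $\tfrac{\beta_y}{2}\|y-y^k\|^2$ are exactly proximal ADMM, whose convergence for arbitrary positive penalty parameters and rank-deficient $A$ is classical (\cite{deng2017parallel}); in particular the convergence measure $\Phi^k$ (primal residual $\|Ax^k-y^k\|$ plus dual residual) tends to $0$ and the objective value converges to the optimum $f^\star_{z^k}:=\min_x f(x)+g(Ax,z^k)$. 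Consequently, for any $K$, after finitely many inner iterations both guards are met: $\|\Phi^{k+1}\|<\eta^K$ holds once the residual is small enough, and the objective-decrease condition $g(y^{k+1},z^{k+1})-g(y^{k+1},z^\star)\le\eta\,(g(y^k,z^k)-\cdots)$ — more precisely the "$(1-\eta)$-fraction reduction" stated in the text — holds once the convex subproblem is nearly solved and the optimal $z^\star=z(y)$ strictly improves the barrier value (this is where $\sigma$-strong convexity of $g(y,\bullet)$ is used: it makes $z(y)$ unique and the improvement quantitative). Hence $z$ is updated infinitely often.

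It remains to pass from "$z$ updated infinitely often, parameters eventually large, iterates feasible" to "$(x^k,z^k)$ is asymptotically $\epsilon$-stationary for \prettyref{eq:snlp}". For this I would invoke \prettyref{thm:BC-ADMM-case1} on the tail of the run after the last restart: with parameters above threshold, BC-ADMM (which is exactly what \prettyref{alg:practical} executes once the lazy guards fire every iteration) produces, within $O(\epsilon^{-2})$ oracle calls, an $\epsilon$-stationary point of \prettyref{eq:snlp}; combined with the monotone best-solution tracking $\langle x,z\rangle^\star$, the stationarity residual of the recorded best solution goes to $0$. One subtlety to handle carefully is that the lazy strategy may skip $z$-updates for long stretches, so the "tail" on which \prettyref{thm:BC-ADMM-case1} is applied must itself contain infinitely many $z$-updates; I would argue that after the final restart the guards — now with $K$ itself increasing — are passed with frequency that makes the skipped iterations harmless, since on skipped steps the convex ADMM is still monotonically reducing $\mathcal{L}$.

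\textbf{Main obstacle.} The delicate point is reconciling the \emph{asymptotic} statement here with the \emph{non-constructive} threshold in \prettyref{thm:BC-ADMM-case1}: one must show the geometric parameter growth actually reaches that unknown threshold \emph{before} an infinite number of restarts occurs, and that each restart's "reset to $\langle x,z\rangle^\star$" does not interact badly with the Lyapunov/sufficient-decrease estimates underpinning \prettyref{thm:BC-ADMM-case1} (which are stated for a fresh run, not a warm-started one). Making the warm-start compatible with those estimates — essentially, re-deriving the descent of $\mathcal{L}$ across a restart using that $\langle x,z\rangle^\star$ has no larger objective than the point being discarded — is where the real work lies; everything else is a routine assembly of convex ADMM convergence and the already-established \prettyref{thm:BC-ADMM-case1}.
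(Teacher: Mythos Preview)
Your outer claim (finitely many feasibility restarts, because each restart geometrically increases $\beta_y,\beta$ and eventually the thresholds of \prettyref{thm:BC-ADMM-case1} are exceeded, after which every iterate stays feasible) and your inner claim (with $z$ frozen, the $x,y,\lambda$ updates are proximal ADMM on a convex problem, so $\Theta^{k+1}\to 0$) are correct and match the paper. The paper uses \prettyref{thm:BC-ADMM-case1} for exactly this and nothing more: once feasibility is permanent, that theorem is not invoked again.

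The genuine gap is your final step, where you propose to ``invoke \prettyref{thm:BC-ADMM-case1} on the tail of the run after the last restart.'' This cannot work: even with arbitrarily large parameters, \prettyref{alg:practical} is \emph{not} BC-ADMM, because the lazy guard $\Theta^{k+1}\le\eta^K$ still controls whether $z$ is updated, and since $K$ increases each time the guard fires, the guard gets \emph{harder}, not easier, to pass. There is no mechanism forcing the guards to fire every iteration, and the Lyapunov descent behind \prettyref{thm:BC-ADMM-case1} relies on $z^{k+1}=z(y^{k+1})$ at every step. Your ``main obstacle'' paragraph therefore misidentifies the difficulty: warm-start compatibility across restarts is a non-issue (there are no restarts in the tail); the issue is that the tail dynamics are not BC-ADMM at all.

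The paper's argument for the tail is entirely different. It splits into two cases. If $z^\star$ is updated only finitely often, the algorithm eventually runs pure convex ADMM with a frozen $z^\star$ and limit $x^\dagger$; either $g(Ax^\dagger,z(Ax^\dagger))<g(Ax^\dagger,z^\star)$, in which case the sufficient-decrease guard is eventually met (contradiction), or equality holds, in which case uniqueness of $z(\cdot)$ from \prettyref{ass:bivariable} forces $z^\star=z(Ax^\dagger)$ and $x^\dagger$ is already $\epsilon$-stationary. If $z^\star$ is updated infinitely often along a subsequence $k(i)$, the first guard guarantees that the monotone, bounded-below best-value sequence $f(x^\star)+g(Ax^\star,z^\star)$ drops by at least $(1-\eta)\bigl(g(y^{k(i)+1},z^{k(i)})-g(y^{k(i)+1},z^{k(i)+1})\bigr)$ at each such $i$, so these drops must vanish; then uniqueness of $z(y^\dagger)$ at any limit point $y^\dagger$ forces $z^{k(i)+1}-z^{k(i)}\to 0$, hence $z^{k(i)}\to z(y^\dagger)$, and combined with $\Theta^{k(i)+1}\le\eta^{K}\to 0$ one reads off $\epsilon$-stationarity directly. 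The ingredients you are missing are this case split and the ``summable strict decrease $\Rightarrow$ vanishing $z$-improvement $\Rightarrow$ convergence of $z^{k(i)}$ via uniqueness'' chain.
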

\TE{Sketch of Proof} Our analysis is divided into two sections. First, we show that~\prettyref{ln:feasibility} would adaptively increase $\beta_x,\beta_y,\beta$. Ultimately, the assumptions of~\prettyref{thm:BC-ADMM-case1} will hold. In other words, these parameters will only be modified for finitely many iterations. Second, we analyze the lazy update in two cases, where $z^{k+1}$ can be updated for finitely or infinitely many times. In both cases, we combine the uniqueness of function $z(y)$ and the sufficient decrease condition below~\prettyref{ln:lazy-z} to establish the termination condition blow~\prettyref{ln:termination} after finitely many iterations.

\prettyref{thm:asymptotic} result immediately implies that the termination condition below~\prettyref{ln:termination} will be met within finitely many iterations. As a result,~\prettyref{alg:practical} becomes a practical optimization solver that converges with a wide range of parameter choices for $\beta_y$, and $\beta$, which automatically determines $\beta_x$. Unfortunately, we are unable to prove convergence speed in this case, but instead demonstrates satisfactory performance across several applications.

Finally, we observe that in many problems, a large number of constraints co-exist. When the number of agents in~\prettyref{fig:fail} grows, for example, we must introduce a collision constraint for each pair of agents, leading to a quadratic rate of growth in the number of terms in $g$ with respect to the number of agents. However, many of these constraints can be ignored when the pair of agents is far apart. To effectively limit the number of constraints, we could introduce a preemptive constraint detector. Formally, we assume $g=\sum_ig_i(y,z)$ is decomposed into finitely many terms, each modeling a singleton constraint. We further introduce the function $\mathcal{V}=$Constraint-Detected($x,g$), which determines whether $g_i$ is violated and returns the violated index set $\mathcal{V}=\{i|g_i(Ax,z)=\infty\;\forall z\in\mathcal{Z}, g_i\text{ not included in }g\}$. Using this function, we can start by ignoring all the constraints, and then iteratively introduce constraints preemptively detected. For example, in the case of~\prettyref{fig:fail}, the constraint detector can take the form of a collision detector~\cite{pan2012fcl}. Unfortunately, some constraint detectors can only find constraints that are already violated, but BC-ADMM requires the problem to be strictly feasible. Therefore, we propose to revert to the previous solution when such constraints are detected, where the problem data, including $f,g,A$, is modified. This strategy is outlined in~\prettyref{ln:detector} and we argue for the rigorous convergence under this setting in the following result.
\begin{corollary}
Taking~\prettyref{ass:bivariable} and assuming $f$ is convex,~\prettyref{alg:practical} converges to the $\epsilon$-stationary solution of~\prettyref{eq:snlp}.
\end{corollary}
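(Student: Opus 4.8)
The plan is to reduce the corollary to \prettyref{thm:asymptotic} by showing that the constraint detector at \prettyref{ln:detector} fires only finitely often, after which none of the hypotheses of \prettyref{thm:asymptotic} is ever violated again. The key structural fact is that $g=\sum_{i=1}^N g_i$ decomposes into \emph{finitely many} singleton terms. By construction $\mathcal{V}=$Constraint-Detected$(x,g)$ returns only indices $i$ whose term $g_i$ is \emph{not yet} in the working $g$, and a term once included is never dropped; hence each pass through the detection branch permanently enlarges the active index set by at least one, so that branch fires at most $N$ times. Consequently there is a finite iteration $K_0$ after which the data $(f,g,A)$ never changes, and --- since there are only finitely many such data phases and within each one the infeasibility safeguard at \prettyref{ln:feasibility} inflates $\beta_x,\beta_y,\beta$ only finitely often, exactly as in the proof of \prettyref{thm:asymptotic} --- the penalty weights stabilize as well.

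Next I would check that the feasibility invariant underlying BC-ADMM survives each detection event. When $\mathcal{V}\ne\emptyset$ the algorithm reverts to a previously recorded solution at which the detector had not yet reported the new term; such a solution is strictly feasible for the \emph{augmented} data, because the separating-plane block of $z$ associated with the new constraint decouples from the rest and can be (re)set to a value keeping $g_i$ finite --- and, in the worst case, $\langle x^0,z^0\rangle$ itself is strictly feasible for the full $g$ by item~(iii) of \prettyref{ass:bivariable-case1}, hence for any sub-collection of terms. Thus the invariant $f(x^k)+g(Ax^k,z^k)<\infty$ is re-established after every modification, and for $k\ge K_0$ the tail of \prettyref{alg:practical} is literally an execution of BC-ADMM on fixed data, with $f$ convex, \prettyref{ass:bivariable-case1} in force, a strictly feasible warm start, and the detection branch never firing again; \prettyref{thm:asymptotic} then yields convergence to an $\epsilon$-stationary point of \prettyref{eq:snlp} \emph{for the frozen data}.

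The last --- and, I expect, the hardest --- step is to upgrade this to $\epsilon$-stationarity of the \emph{original} \prettyref{eq:snlp}, i.e.\ to show the omitted terms $g_i$ do not perturb the first-order optimality certificate at the limit. Here I would lean on the \emph{preemptive} nature of detection: along the converging tail the detector keeps returning $\emptyset$, so every omitted $g_i$ stays outside the support of its locally supported barrier $P_r$ near the limit configuration, whence $g_i$ and $\nabla g_i$ vanish there and adding these terms back changes neither the objective value nor the KKT residual. The point that needs care is precisely this ``inactivity'' claim under the literal definition of $\mathcal{V}$ --- one must exclude the borderline case of an omitted constraint that is non-violated yet still on the rising branch of its barrier at the limit --- which is where the explicit construction of $P_r$ in \prettyref{sec:collision-potential}, together with the detection margin, enters.
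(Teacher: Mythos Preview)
Your reduction to \prettyref{thm:asymptotic} via the two observations---finitely many detection events (because each event strictly enlarges a finite index set) and preservation of strict feasibility across each restart---is exactly the paper's argument. The paper's feasibility step is somewhat cleaner than yours: rather than invoking decoupling of the $z$-blocks or falling back on $\langle x^0,z^0\rangle$, it notes that $\langle x,z\rangle^\star$ is only ever updated \emph{after} the detector returns $\emptyset$, so the rolled-back configuration was already certified non-violating for every $g_i$; hence setting $z^{k+1}\gets z(Ax^\star)$ at~\prettyref{ln:init-new-z} gives $g(Ax^\star,z^{k+1})<\infty$ for the augmented $g$ directly. Your version reaches the same conclusion but leans on application-specific structure that is not strictly needed.

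Where you go further than the paper is your third step: upgrading $\epsilon$-stationarity from the \emph{frozen} working $g$ to the \emph{full} problem~\prettyref{eq:snlp}. The paper's proof of the corollary simply stops after invoking \prettyref{thm:asymptotic} on the final fixed data and does not address whether omitted terms could spoil the KKT residual; the only acknowledgement is a separate remark stating that a ``well-defined'' detector ignores terms only when they are below a threshold and that the conclusion is then ``not difficult.'' Your identification of this as the delicate point---and your observation that the locally supported $P_r$ together with a detection margin is what makes omitted terms vanish to first order at the limit---is a genuine sharpening. The borderline scenario you flag (an omitted but active barrier) is precisely the case the paper's literal detector definition $\mathcal{V}=\{i\mid g_i(Ax,z)=\infty\ \forall z\}$ does not exclude, so if you want the statement for the full~\prettyref{eq:snlp} you will indeed need the margin-based detector rather than the violation-only one.
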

\begin{proof}
This result only extends~\prettyref{thm:asymptotic} by enabling constraint detector. Note that when new constraints are detected in~\prettyref{ln:detector}, we would roll back to the previous best solution $\langle x,z\rangle^\star$. We claim that no constraints are violated at that configuration, because it will be rolled back to even earlier solutions otherwise. Since no constraints are violated at $\langle x,z\rangle^\star$, we conclude that $g_i(Ax^\star,z(Ax^\star))<\infty$ for any $i\in\mathcal{V}$. By our initialization of $z^{k+1}\gets z(y^{k+1})=z(Ax^\star)$ at~\prettyref{ln:init-new-z}, we know that $g(y^{k+1},z^{k+1})<\infty$, so that~\prettyref{ass:bivariable-case1} still holds and~\prettyref{alg:practical} is essentially restarting with a new function $g$ and a feasible solution. If no more constraints are detected, then the asymptotic convergence of~\prettyref{alg:practical} follows from~\prettyref{thm:asymptotic}. Our result follows by noting that new constraints can only be detected a finite number of times.
\end{proof}

\begin{algorithm}[ht]
\caption{\label{alg:practical}BC-ADMM}
\begin{algorithmic}[1]
\Require{$x^0$, $A$, convex $f$, bi-convex $g$, $\epsilon>0$}
\Require{$\kappa>\kappa_y>1$, $\gamma\in(0,1)$, $\eta\in(0,1)$, $\beta_y>0$, $\beta>0$}
\Ensure{$x^k$}
\State $\epsilon_x\gets\beta_y\gamma/(\beta_y+\beta\gamma)$,
$\epsilon_y\gets\beta\gamma/(\beta_y+\beta\gamma)$
\State $\beta_x\gets\beta(1/\epsilon_x-1)\|A^TA\|/\gamma$
\State $\langle x,z\rangle^0\gets
\langle x^0,z(y^0)\rangle$
\State $y^0\gets Ax^0$
\State $\lambda^0\gets\nabla_yg(y^0,z^0)$
\State $K\gets1$
\For{$k=0,1,2,\cdots$}
\State $x^{k+1}\gets\argmin{x}\mathcal{L}(x,y^k,z^k,\lambda^k)+\frac{\beta_x}{2}\|x-x^k\|^2$
\LineComment{The update in $y^{k+1}$ oftentimes allow parallel solver}
\State $y^{k+1}\gets\argmin{y}\mathcal{L}(x^{k+1},y,z^k,\lambda^k)+\frac{\beta_y}{2}\|y-y^k\|^2$
\State $z^{k+1}\gets\argmin{z\in\mathcal{Z}}\mathcal{L}(x^{k+1},y^{k+1},z,\lambda^k)$
\LineComment{Termination condition}
\label{ln:termination}
\If{\small$\left\|\TWOC
{\nabla_x f(x^{k+1})+A^T\nabla_y g(y^{k+1},z^{k+1})}
{Ax^{k+1}-y^{k+1}}\right\|_\infty\leq\epsilon$}
\State Return $\langle x,y,z\rangle^{k+1}$
\EndIf
\State $\Theta^{k+1}\gets\left\|\TWOC
{\nabla_x f(x^{k+1})+A^T\nabla_y g(y^{k+1},z^k)}
{Ax^{k+1}-y^{k+1}}\right\|_\infty$
\State $\Lambda^{k+1}\gets \begin{array}{c}f(x^\star)+g(Ax^\star,z^\star)+\\(1-\eta)(g(y^{k+1},z^{k+1})-g(y^{k+1},z^\star))\end{array}$
\LineComment{Track the best solution and lazy update $z^{k+1}$}
\label{ln:lazy-z}
\footnotesize
\If{$f(x^{k+1})+g(Ax^{k+1},z^{k+1})<\Lambda^{k+1}$$\land$ 
$\Theta^{k+1}\leq\eta^K$}
\normalsize
\LineComment{Handle new constraints detected}
\label{ln:detector}
\If{$\emptyset\neq\mathcal{V}\gets$Constraints-Detected($x^{k+1},g$)}
\State $\langle x,z\rangle^{k+1}\gets
\langle x,z\rangle^\star$
\State $y^{k+1}\gets Ax^{k+1}$
\State $\lambda^{k+1}\gets\nabla_yg(y^{k+1},z^{k+1})$
\LineComment{Insert detected constraints $\mathcal{V}$}
\State $g\gets g+\sum_{i\in\mathcal{V}}g_i$
\State $z^{k+1}\gets z(y^{k+1})$\label{ln:init-new-z}
\State Continue
\Else 
\State $\langle x,z\rangle^\star\gets
\langle x,z\rangle^{k+1}$
\State $K\gets K+1$
\EndIf
\Else
\State $z^{k+1}\gets z^k$
\EndIf
\State $\lambda^{k+1}\gets\lambda^k+\beta(Ax^{k+1}-y^{k+1})$
\LineComment{Ensure feasibility by increasing $\beta_x,\beta_y,\beta$}
\label{ln:feasibility}
\If{$f(x^{k+1})+g(Ax^{k+1},z^{k+1})=\infty$}
\State $\langle x,z\rangle^{k+1}\gets
\langle x,z\rangle^\star$
\State $y^{k+1}\gets Ax^{k+1}$
\State $\lambda^{k+1}\gets\nabla_yg(y^{k+1},z^{k+1})$
\State $\beta_y\gets\kappa_y\beta_y$,
$\beta\gets\kappa\beta$
\State $\epsilon_x\gets\beta_y\gamma/(\beta_y+\beta\gamma)$,
$\epsilon_y\gets\beta\gamma/(\beta_y+\beta\gamma)$
\State $\beta_x\gets\beta(1/\epsilon_x-1)\|A^TA\|/\gamma$
\EndIf
\EndFor
\end{algorithmic}
\end{algorithm}
\subsection{Applications}
We present several optimization problems in robotic applications that BC-ADMM can solve. To begin with, we prove the following result, which shows that if two functions $g_1$ and $g_2$ satisfy our assumption, then so does their summation $g_1+g_2$. As a result, we can arbitrarily composite the terms discussed in this section to deal with more complex problem instances.
\begin{corollary}
\label{cor:summation}
We suppose $g_1(y_1,z_1)\in\mathbb{R}^{m_1+l_1}$ and $\mathcal{Z}_1$ satisfy~\prettyref{ass:bivariable-case1} iv) and vi) and $g_2(y_2,z_2)\in\mathbb{R}^{m_2+l_2}$ and $\mathcal{Z}_2$ satisfy~\prettyref{ass:bivariable-case1} iv) and vi). Then $g=g_1+g_2\in\mathbb{R}^{m+l}$ with $m=m_1+m_2$, $l=l_1+l_2$, and $\mathcal{Z}=\mathcal{Z}_1\times\mathcal{Z}_2$ satisfy~\prettyref{ass:bivariable-case1} iv) and vi).
\end{corollary}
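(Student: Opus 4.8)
The plan is to exploit the \textbf{block-separable structure} that $g=g_1+g_2$ inherits once the variables are interleaved correctly. Writing $y=\TWOR{y_1}{y_2}$ and $z=\TWOR{z_1}{z_2}$, we have $g(y,z)=g_1(y_1,z_1)+g_2(y_2,z_2)$, so $g$ depends on $(y_1,z_1)$ and on $(y_2,z_2)$ through two non-interacting summands. Every property in~\prettyref{ass:bivariable-case1} iv) (and the extra item vi)) is then verified by checking it separately on each block and invoking the corresponding property of $g_1$ and $g_2$.

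Most of these properties are closed under addition and Cartesian products essentially by inspection, so I would dispatch them quickly. The set $\mathcal{Z}=\mathcal{Z}_1\times\mathcal{Z}_2$ is non-empty and convex, being a product of non-empty convex sets. For fixed $z=(z_1,z_2)$, the map $y\mapsto g_1(y_1,z_1)+g_2(y_2,z_2)$ is convex in $y$, as a sum of two functions each convex in one coordinate block and constant in the other; likewise continuity on $\mathbb{R}^{m+l}$ and twice continuous differentiability on $\dom(g)$ are preserved under summation. Finally, since both $g_i$ are $\mathbb{R}^+\cup\{\infty\}$-valued, $g(y,z)<\infty$ iff $(y_1,z_1)\in\dom(g_1)$ and $(y_2,z_2)\in\dom(g_2)$, so $\dom(g)$ is (up to reordering of coordinates) the product $\dom(g_1)\times\dom(g_2)$, which keeps $g$ proper and preserves the feasible-interior structure.

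The only property that is \emph{not} a one-line consequence of sums and products is \textbf{$\sigma$-strong convexity of $g(y,\bullet)$}, and this is where I expect the (mild) work to be. For fixed $y$, the function $z\mapsto g_1(y_1,z_1)+g_2(y_2,z_2)$ is the separable sum of a $\sigma_1$-strongly convex function of $z_1$ and a $\sigma_2$-strongly convex function of $z_2$. Setting $\sigma=\min(\sigma_1,\sigma_2)$, I would write, for each $i$, $g_i(y_i,z_i)-\tfrac{\sigma}{2}\|z_i\|^2=\bigl[g_i(y_i,z_i)-\tfrac{\sigma_i}{2}\|z_i\|^2\bigr]+\tfrac{\sigma_i-\sigma}{2}\|z_i\|^2$, which is convex in $z_i$ as the sum of a convex function and the quadratic $\tfrac{\sigma_i-\sigma}{2}\|z_i\|^2$ (convex since $\sigma_i-\sigma\ge0$), hence convex in $z=(z_1,z_2)$; summing over $i=1,2$ shows $g(y,\bullet)-\tfrac{\sigma}{2}\|\bullet\|^2$ is convex, i.e.\ $g(y,\bullet)$ is $\sigma$-strongly convex. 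In particular the minimizer of $g(y,\bullet)$ over $\mathcal{Z}$ stays unique, so the induced map $z(y)$ remains well-defined and equals $\TWOR{z_1(y_1)}{z_2(y_2)}$. The remaining bookkeeping is immediate: the dimensions add as $m=m_1+m_2$, $l=l_1+l_2$, and if item vi) records a decomposition of $g$ into finitely many singleton-constraint terms, the union of the two finite decompositions is again finite. Hence all the required items of~\prettyref{ass:bivariable-case1} hold for $g$ and $\mathcal{Z}$.
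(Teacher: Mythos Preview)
Your treatment of item iv) is fine and in fact more explicit than the paper's, which does not spell out the $\sigma=\min(\sigma_1,\sigma_2)$ argument at all. The gap is in item vi): you have guessed at its content and guessed wrong. The reference to ``\prettyref{ass:bivariable-case1} vi)'' in the statement is a misprint; there is no item vi) in that assumption. What is actually meant is item vi) of the more general~\prettyref{ass:bivariable}, which requires three things: (a) that $g(y,\bullet)+\iota_\mathcal{Z}(\bullet)$ has a \emph{unique} global minimizer $z(y)$ whenever $\TWO{y}{z}\in\mathcal{S}_\mathcal{Z}(r,M)$; (b) that this minimizer stays in a larger sublevel set, i.e.\ $\TWO{y}{z(y)}\in\mathcal{S}_\mathcal{Z}(\bar{r}_{zy}^{r,M},M)$ for some finite $\bar{r}_{zy}^{r,M}$; and (c) that $z(y)$ is $L_{zy}^{r,M}$-Lipschitz on that set. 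Item vi) has nothing to do with a finite decomposition of $g$ into singleton constraints.

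You do cover part (a) implicitly via separability and uniqueness of each $z_i(y_i)$, but parts (b) and (c) are simply absent from your argument. The paper handles them by noting that $\TWO{y}{z}\in\mathcal{S}_\mathcal{Z}(r,M)$ forces $\TWO{y_i}{z_i}\in\mathcal{S}_{\mathcal{Z}_i}(r,M)$ for each $i$ (since both $g_i\geq0$ and the norms of the blocks are bounded by the norm of the whole), then invoking the assumed $\bar{r}_{z_iy}^{r,M}$ and $L_{z_iy}^{r,M}$ for each block and summing: $\bar{r}_{zy}^{r,M}=\bar{r}_{z_1y}^{r,M}+\bar{r}_{z_2y}^{r,M}$ and $L_{zy}^{r,M}=L_{z_1y}^{r,M}+L_{z_2y}^{r,M}$. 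This is the substantive content of the corollary that your proposal needs to add.
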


\begin{figure}[ht]
\centering
\includegraphics[width=0.83\linewidth]{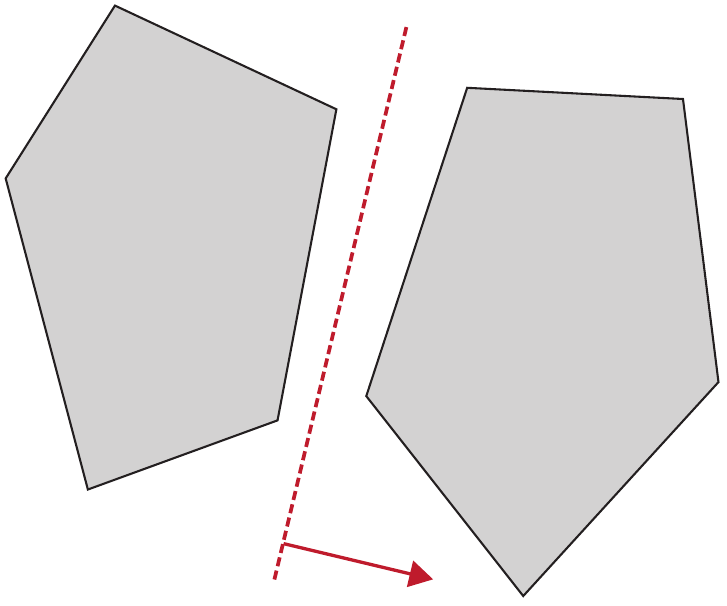}
\put(-60,15){$y_1$}
\put(-18,62){$y_2$}
\put(-25,128){$y_3$}
\put(-68,130){$y_4$}
\put(-92,56){$y_5$}
\put(-173,40){$y_6$}
\put(-135,55){$y_7$}
\put(-122,128){$y_8$}
\put(-168,150){$y_9$}
\put(-192,110){$y_{10}$}
\put(-100,15){$n$}
\put(-130,15){$d$}
\caption{\label{fig:collision} We illustrate the barrier potential energy to prevent collisions between a pair of convex objects, each having $5$ vertices so that $N=M=5$. The barrier energy introduces a separating plane, which is parameterized using variables $z=\TWO{n}{d}$, and requires that the convex objects reside in different sides of the separating plane.}
\end{figure}
\subsubsection{\label{sec:collision-potential}Collision Constrained Optimization}
As our first application, we consider the multi-robot pose optimization problem presented in~\prettyref{fig:fail}. The key challenge in this problem lies in the handling of the collision constraint, which was originally modeled as a potential $\tilde{P}_r(y_i,y_j)=-\log_\epsilon(\|y_i-y_j\|-2r)$. This potential is a general non-convex extended real function. However, a bi-convex potential has recently been proposed in~\cite{liang2024second}. As illustrated in~\prettyref{fig:collision}, we consider two convex hulls denoted as $\CH(y_{1,\cdots,N})$ and $\CH(y_{{N+1},\cdots,{N+M}})$, where the first hull has $N$ vertices $y_{1,\cdots,N}$ while the second hull has $M$ vertices $y_{{N+1},\cdots,{N+M}}$. To separate the two hulls,~\citet{liang2024second} introduced a separating plane denoted as $z=\TWO{n}{d}\in\mathbb{R}^4$, with $n$ being the plane normal and $d$ being the plane offset. Put together, the potential energy preventing the two hulls from collisions is formulated as:
\begin{equation}
\begin{aligned}
P_r(y,z)=&-\sum_{i=1}^N\log_\epsilon(y_i^Tn+d-r)\\
&-\sum_{i=N+1}^{N+M}\log_\epsilon(-y_i^Tn-d-r)+\frac{\sigma}{2}\|z\|^2,
\end{aligned}
\end{equation}
where we add a regularization $\sigma\|z\|^2/2$ to make it $\sigma$-strongly convex in $z$ and the locally supported log-barrier function is defined as $\log_\epsilon(x)=-\max(0,(\epsilon-x)^4/x^5)$. It is easy to see that $P(y,z)$ is well-defined, bi-convex, and satisfies~\prettyref{ass:bivariable-case1} with $\mathcal{Z}=\{n|\|n\|\leq1\}$. Indeed, we can show that $P_r(y,z)$ is monotonically decreasing with the length of normal vector $\|n\|$ and the additional constraint $z\in\mathcal{Z}$ enforces a unit normal $\|n\|=1$, leading to a well-defined separating plane. Revisiting the case of~\prettyref{fig:fail}, we can replace the potential $\tilde{P}_r(y_i,y_j)=-\log_\epsilon(\|y_i-y_j\|-2r)$ with the new potential $P_r(y,z)$ and $y=\TWO{y_i}{y_j}$ to prevent collisions between the agents. However, note that the collision constraint presented here applies to more general convex objects than two points $y_i,y_j$, including line segments and triangles, by changing the number of vertices in each convex hull. Finally, we emphasize that if all the collision constraints are added at once, there can be a quadratic number of terms in $g$, significantly increasing the computational burden. To alleviate this problem, we could use a collision detector such as~\cite{pan2012fcl} as the constraint detector in~\prettyref{alg:practical}.

\subsubsection{Mass-Spring System and Strain-Limit}
During the simulation and control of soft robots and deformable objects, the mass-spring system is one of the most prominent and fundamental material models~\cite{huang2020dynamic,baraff2023large}. A spring force between two particles $y_i$ and $y_j$ is modeled as the following potential energy:
\begin{align}
\tilde{P}_{k,l}(y)=\frac{k}{2}(\|y_i-y_j\|-l)^2,
\end{align}
where $k$ is the spring stiffness, $l$ is the rest length, and $y=\TWO{y_i}{y_j}$. Note that this potential energy does not satisfy our assumption because it is non-differentiable at $y_i=y_j$. However, the situation of $y_i=y_j$ is undesirable and corresponds to the configuration that the spring is infinitely compressed. Therefore, several prior works~\cite{goldenthal2007efficient,wang2010multi} propose to avoid such configurations via a hard constraint that $\|y_i-y_j\|\geq\underline{l}$ where $\underline{l}$ is the maximal allowed level of compression, which is known as strain limiting. We can realize such constraint via the following modified spring energy with strain limiting as follows:
\begin{align}
\tilde{P}_{k,l,\underline{l}}(y)=\frac{k}{2}(\|y_i-y_j\|-l)^2-\log_\epsilon(\|y_i-y_j\|-\underline{l}).
\end{align}
Equipped with strain limiting, $\tilde{P}_{k,l,\underline{l}}$ becomes twice continuously differentiable in its domain. Again, $\tilde{P}_{k,l,\underline{l}}$ is a non-convex extended real function that does not satisfy our assumption. To derive a bi-convex relaxation, we follow~\cite{liu2013fast} to introduce a direction $z\in\mathbb{R}^3$ and define:
\begin{equation}
\begin{aligned}
&P_{k,l,\underline{l}}(y,z)\\
=&\frac{k}{2}\|y_i-y_j-zl\|^2-\log_\epsilon(z^T(y_i-y_j)-\underline{l}),
\end{aligned}
\end{equation}
where $z$ is restricted to the surface unit sphere $\mathcal{Z}=\{z|\|z\|=1\}$. However, the set $\mathcal{Z}$ is non-convex and does not satisfy our~\prettyref{ass:bivariable-case1}. Instead, we propose a more general assumption and prove it holds for $P_{k,l,\underline{l}}(y,z)$ in \iflong \prettyref{lem:mass-spring}. \else our extended paper. \fi We further prove that the unique global minimizer of $z\in\mathcal{Z}$ is $z(y)=(y_i-y_j)/\|y_i-y_j\|$ and $P_{k,l,\underline{l}}(y,z(y))=\tilde{P}_{k,l,\underline{l}}(y)$.

\begin{figure}[ht]
\centering
\includegraphics[width=0.83\linewidth]{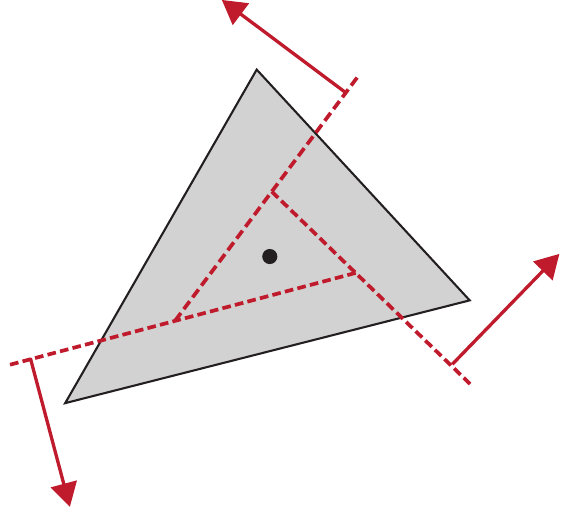}
\put(-100,85){$y_0$}
\put(-165,48){$y_1$}
\put(-58,75){$y_2$}
\put(-115,135){$y_3$}
\put(-35,50){$n_1$}
%
\put(-90,155){$n_2$}
%
\put(-198,38){$n_3$}
\caption{\label{fig:ARAP} We illustrate the barrier potential energy to enforce strain limiting for the ARAP material model in 2D cases ($d=2$). We first introduce an additional vertex $y_0$ and require that $y_0$ resides strictly inside the triangular element. This is achieved by introducing $d+1=3$ separating planes, parameterized with normal directions $z=\THREE{n_1}{n_2}{n_3}$, and requiring that each edge resides in the opposite side of the corresponding separating plane to $y_0$.}
\end{figure}
\subsubsection{ARAP Deformation} To simulate more complex material behaviors of soft robots and deformable objects, we need to introduce material models more general than the mass-spring system. One of the widely used material model is the ARAP elastic energy~\cite{sorkine2007rigid,kwok2017gdfe,fang2018geometry}. These techniques decompose a $d$-dimensional deformable object into a set of simplex elements. Each element is equipped with $d+1$ vertices so that $y\in\mathbb{R}^{(d+1)d}$ and a deformable gradient $F(y)\in\mathbb{R}^{d\times d}$. The ARAP energy is then formulated as:
\begin{align}
\tilde{P}_F(y)=\|F(y)-R(F(y))\|_F^2,
\end{align}
where $R(F(y))$ is the rotational part of the polar decomposition of $F(y)$. Intuitively, the ARAP energy encourages the soft robot to undergo a purely rigid transformation dictated by $R(F)$, and induces elastic energy otherwise. Similar to the case with mass-spring models, we can also introduce the idea of strain limiting. Let us denote $\sigma_k(F)$ as the $k$th singular value of $F$. If any $\sigma_k(F)$ is close to zero, then the corresponding element is significantly compressed along a right singular vector, which is unlikely to happen in real-world scenarios. To prevent these situations from happening, a well-known energy is the NeoHookean elastic potential~\cite{kim2012comparison}, which is formulated as:
\begin{equation}
\begin{aligned}
\tilde{P}_F(y)=&\|F(y)-R(F(y))\|_F^2-\logdet(F(y))+\\
&\sum_{i=1}^d\iota_{\sigma_k(F)>0}(F),
\end{aligned}
\end{equation}
where the additional $\iota_{\sigma_k(F)>0}$ functions ensure that $F$ has all positive singular values and the $-\logdet(F(y))$ term ensures that the potential function is continuous. Unfortunately, the NeoHookean does not have a bi-convex relaxation. Instead, we propose a novel biconvex relaxation to prevent $\sigma_k(F)$ from getting negative as illustrated in~\prettyref{fig:ARAP}, following a similar idea as~\cite{liang2024second}. We introduce an additional point $y_0\in\mathbb{R}^d$ as a decision variable. We further introduce $d+1$ separating plane normals $n_i$ to separate $y_0$ and each of the $d+1$ surfaces of the element. Mathematically, we define:
\begin{equation}
\begin{aligned}
&P_{F,\epsilon}(y,z)=\|F-R\|_F^2+\\
&\sum_{i=1}^d\iota_{\sigma_k(F)>0}(F)+\\
&\iota_{\CH(y_1,\cdots,y_{d+1})}(y_0)+\\
&\sum_{i=1}^{d+1}\frac{\sigma}{2}\|n_i\|^2
-\sum_{i=1}^{d+1}\sum_{0<j\neq i}\log_\epsilon(n_i^T(y_j-y_0)),
\end{aligned}
\end{equation}
where the additional $\iota_{\CH(y_1,\cdots,y_{d+1})}(y_0)$ term ensures that $y_0$ belongs to the simplicial complex, the regularization $\sigma\|n_i\|^2/2$ ensures $P_{F,\epsilon}$ is $\sigma$-strongly convex in $n_i$. Finally, we re-define $y=\THREE{y_0}{\cdots}{y_{d+1}}\in\mathbb{R}^{(d+2)d}$ and $z=\FOUR{n_1}{\cdots}{n_{d+1}}{R}$ and $\mathcal{Z}=\prod_{i=1}^{d+1}\{n_i|\|n_i\|\leq1\}\times\text{SO}(3)$. It is easy to see that $P_{F,\epsilon}(y,z)$ is a biconvex energy. Again, since $\mathcal{Z}$ is not a convex set, it does not satisfy our~\prettyref{ass:bivariable-case1}. We use the more general assumption and prove that our $P_{F,\epsilon}(y,z)$ satisfies it in \iflong \prettyref{lem:ARAP}. \else our extended paper. \fi Note that the subproblems in~\prettyref{eq:step-b} and~\prettyref{eq:step-c} of BC-ADMM become easy to solve, where~\prettyref{eq:step-b} amounts to a small convex optimization, while~\prettyref{eq:step-c} amounts to small convex optimizations and a polar decomposition.
\subsection{Numerical Stability}
Compared with general-purpose infeasible point constrained optimizers~\cite{gill2005snopt,wachter2002interior}, the higher numerical stability and robustness are the main benefits of ADMM-type algorithms. Indeed, when the constraint qualifications (nearly) fail, i.e., when the Jacobian of the constraint functions is nearly singular, the QP subproblem can be ill-conditioned. Instead, stable and robust algorithms exist for each and every subproblem of BC-ADMM. Specifically,~\prettyref{eq:step-a} corresponds to an unconstrained optimization of a sufficiently smooth function, for which either gradient descend or Newton's method can be used with proper globalization techniques such as line-search methods. \prettyref{eq:step-b} corresponds to a strongly convex optimization, for which robust algorithms have been well discussed in~\cite{boyd2004convex}. \prettyref{eq:step-c} amounts to compute the function $z(y^{k+1})$. If~\prettyref{ass:bivariable-case1} holds, then~\prettyref{eq:step-c} again amounts to a strongly convex optimization. If our more general~\prettyref{ass:bivariable} holds, then the stability of solving~\prettyref{eq:step-c} needs to be analyzed in a case-by-case manner. Fortunately, our function $z(y^{k+1})$ admits closed-form solutions in all the practical problems. In the case of the modified spring energy $P_{k,l,\underline{l}}(y,z)$, for example, the closed-form global minimizer is $z(y)=(y_i-y_j)/\|y_i-y_j\|$. In the case of the modified ARAP energy with strain limiting $P_{F,\epsilon}(y,z)$, the rotation matrix $R$ can be computed via the polar decomposition.
\section{Evaluation}
We use C++ to implement~\prettyref{alg:practical} and OpenMP to solve $yz$-subproblems in parallel. In all our examples, $f$ is always a quadratic function, so that the $x$-subproblem amounts to a linear system solve for which we use the sparse direct solver in the Eigen library. For all our examples, we choose $\kappa_y=2$, $\kappa=2.1$, $\gamma=0.95$, $\eta=1-10^{-5},$ and run experiments on a single desktop machine with the AMD Ryzen Threadripper 3970X CPU. The remaining parameters $\beta_y $ and $\beta$ need to be tuned for each problem, which is a standard treatment in practical implementations~\cite{overby2017admmpd,ni2022robust}. We evaluate our method on a row of four benchmarks listed below, where we compare our method with two standard baselines: Gradient Descent (GD) and Newton's Method. As a major difference from BC-ADMM, these algorithms solve~\prettyref{eq:snlp} without decoupling $f$ and $g$. Therefore, we only need to update two variables, $x$ and $z$, in an alternative manner. In GD, the simple first-order update rule for $x$ is used:
\begin{align}
x\gets x-\alpha_\text{GD}\left[\nabla_xf(x)+A^T\nabla_yg(Ax,z)\right],
\end{align}
with $\alpha_\text{GD}$ being the sufficiently small step size found by the line search procedure to ensure a strict function value decrease. The Newton's method is defined similarly, but uses the following second-order update rule:
\begin{small}
\begin{equation}
\begin{aligned}
x\gets x-&\left[\nabla_x^2f(x)+A^T\nabla_y^2g(Ax,z)A+\alpha_\text{Newton} I\right]^{-1}\\
&\left[\nabla_xf(x)+A^T\nabla_yg(Ax,z)\right].
\end{aligned}
\end{equation}
\end{small}
Here $\alpha_\text{Newton}$ is the sufficiently large regularization found by the line search procedure to ensure a strict function value decrease. After updating $x$, both GD and Newton's method update $z$ by setting $z\gets z(Ax)$. Typically, Newton's method achieves faster convergence, but each iteration is slower by requiring solving a large-scale sparse linear system. In our implementation, we use the state-of-the-art sparse linear solver~\cite{chen2008algorithm} to compute the second-order search direction. Finally, both methods suffer from the drawback of having to search for a global step size $\alpha_\text{GD}$ and $\alpha_\text{Newton}$ that can slow down the progress of the overall optimization.

\begin{figure}[ht]
\centering
\includegraphics[width=\linewidth,trim=25cm 0 25cm 0,clip]{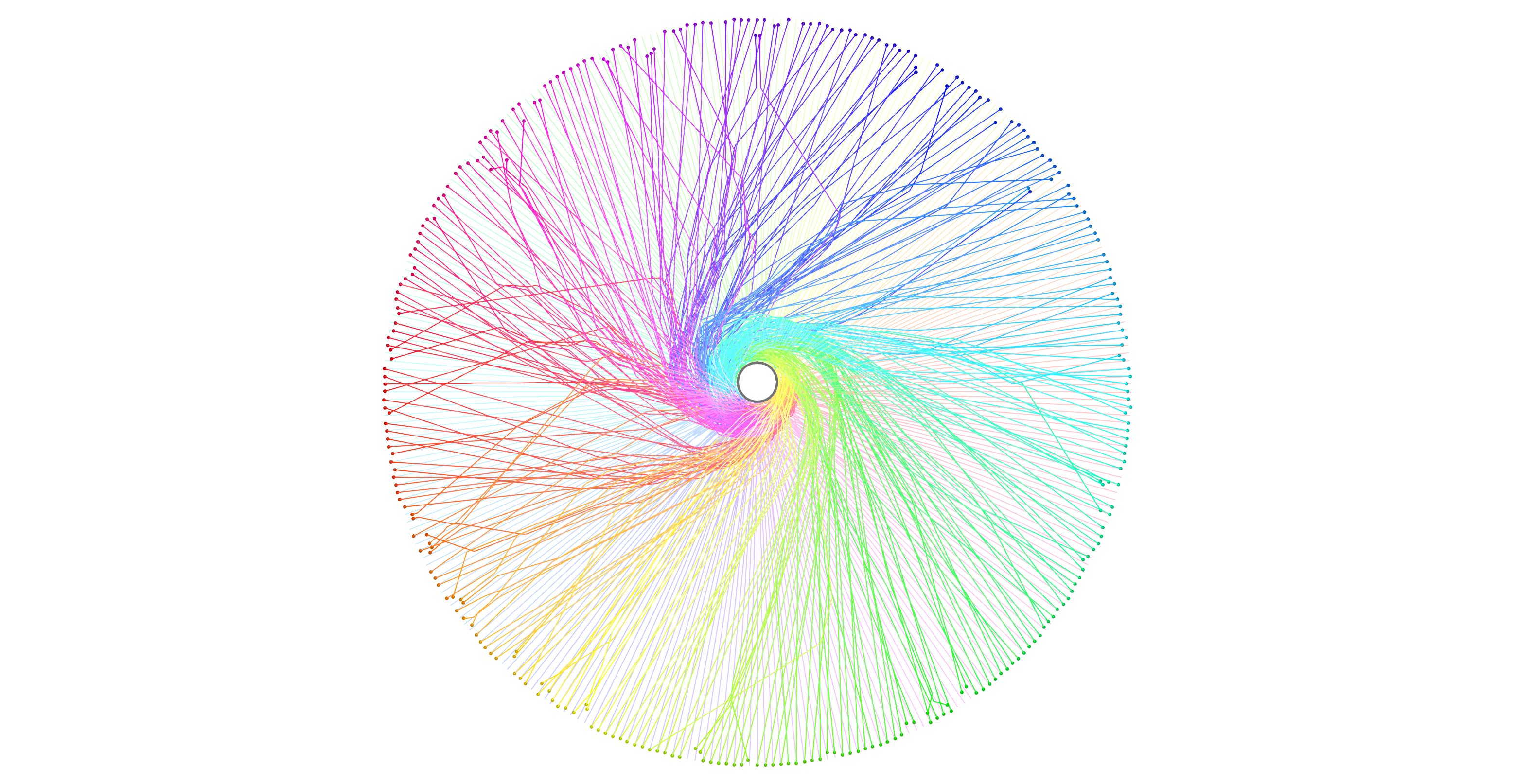}
\caption{\label{fig:navigation}We plan motions for $N=300$ robots with an obstacle in the middle and plot the trajectories computed by BC-ADMM.}
\end{figure}
\begin{figure}[ht]
\centering
\setlength{\tabcolsep}{0px}
\begin{tabular}{cc}
\includegraphics[height=.37\linewidth]{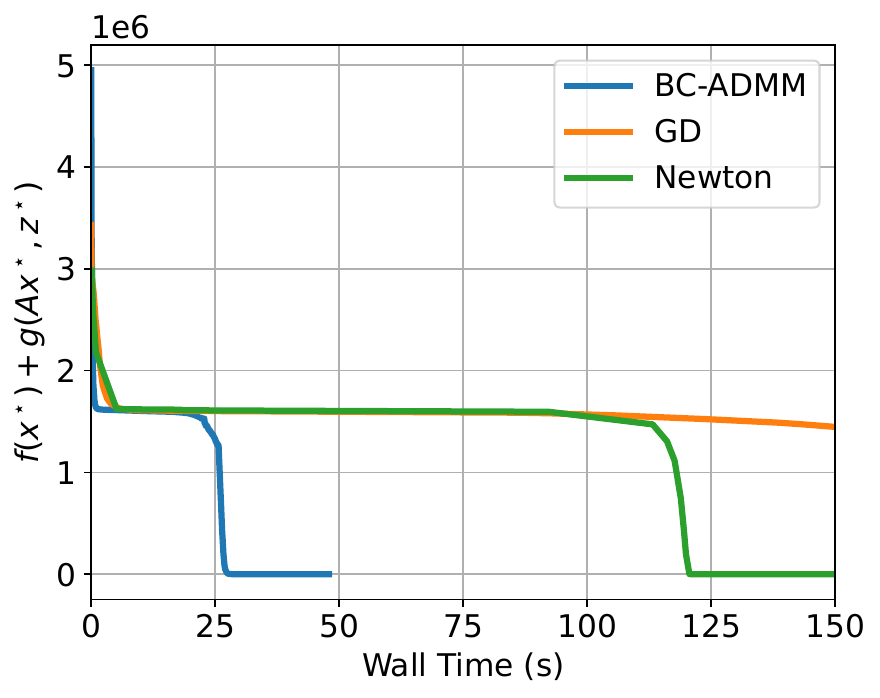}&
\includegraphics[height=.37\linewidth]{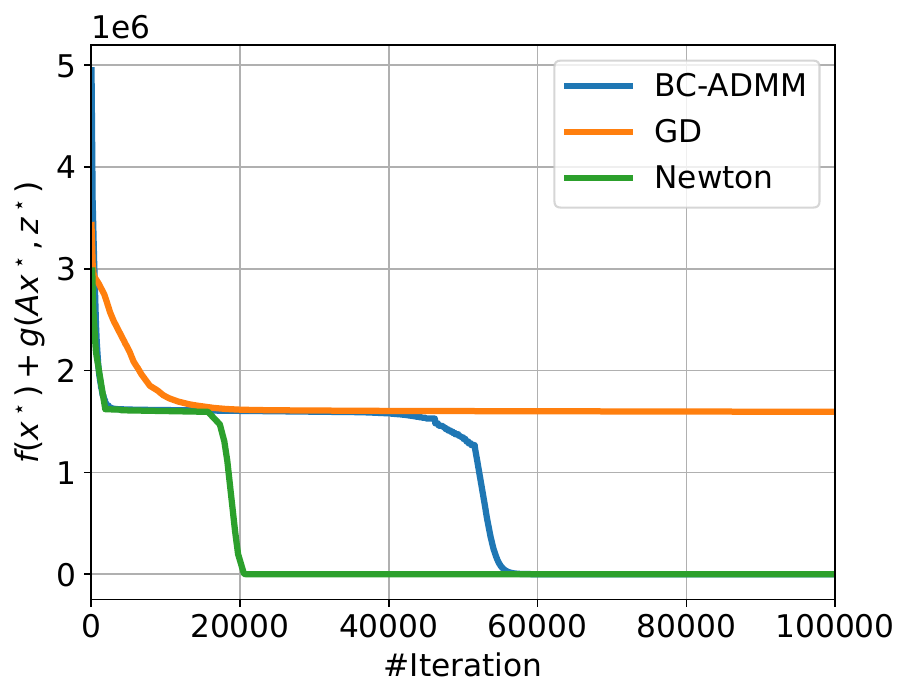}
\end{tabular}
\caption{\label{fig:navigation-conv-E} Objective function level for multi-agent navigation, plotted against wall time (left) and iteration number (right). All three solves would have the robot congested in the middle of the circle, forming a congested configuration. This corresponds to the plateau in the middle of convergence curve. After a while, BC-ADMM and Newton's method find wa ay for all robots to reach the other end.}
\end{figure}
\paragraph{Multi-Agent Navigation} We solve the large-scale multi-agent navigation problem~\cite{karamouzas17,weiss2017position}. We notice that we can compute collision-free navigation trajectories by solving a robot pose optimization problem as illustrated in~\prettyref{fig:fail}. Specifically, for a set of $N$ robots $x=\FOUR{x_1}{x_2}{\cdots}{x_N}$, we adopt the following setup:
\begin{equation}
\begin{aligned}
f(x)=&\sum_{i=1}^N\underbrace{\frac{1}{2}\|x_i-x_i^\star\|^2}_{\text{goal-reaching}}\\
g(Ax,z)
=&\sum_{i=1}^N\sum_{j=i+1}^N\underbrace{P_r(\TWO{x_i}{x_j},z_{ij})}_{\text{self-collision}}+\\
&\sum_{o_l}\sum_{i=1}^N\underbrace{P_r(\TWO{x_i}{o_l},z_{il})}_{\text{obstacle-collision}},
\end{aligned}
\end{equation}
where $x_i^\star$ is the goal of the $i$th robot, $r$ is the robot radius, and $P_r$ is the collision potential between a pair of geometric entities. Besides robot-robot collisions, there are static obstacles in the environment (the $l$th obstacle denoted as $o_l$), and we use two sets of collision potential terms to prevent their interpenetrations. The potential number of collision terms is quadratic, which leads to a large number of $yz$-subproblems to solve. To accelerate computation, we use a 2D collision checker as our constraint detector, where we only include $P_r$ terms in our optimization when the distance between two agents is less than $(2+\epsilon_\text{margin})r$ with $\epsilon_\text{margin}$ set to a margin distance. We emphasize that, although the above formulation only solves for a single pose for each robot, the optimizer would generate a series of intermediary poses during each iteration. These intermediary poses can be connected into a robot trajectory. Specifically, let us denote $k(i)$ as the index of the subset of iterations where the two conditions below~\prettyref{ln:lazy-z} of~\prettyref{alg:practical} are satisfied, i.e. $z^{k(i)+1}$ is updated to $z(y^{k(i)+1})$ at the end of the $k(i)$th iteration. After the optimization, we reconstruct the $i$th robot trajectory as $\FOUR{x_i^0}{x_i^{k(1)+1}}{x_i^{k(2)+1}}{\cdots}$. We claim that these trajectories are valid, collision-free trajectories navigating robots towards their goal $x_i^\star$. To see this is the case, note that $z^{k+1}$ is not updated between iterations $k(i-1)$ and $k(i)$. Since the second condition below~\prettyref{ln:lazy-z} is satisfied at $k(i)$th iteration, we have $\Theta^{k(i)+1}<\infty$ and thus:
\begin{align}
f(x^{k(i)+1})+g(Ax^{k(i)+1},z^{k(i-1)+1})<\infty.
\end{align}
Since the first condition below~\prettyref{ln:lazy-z} is satisfied at $k(i-1)$th iteration, we have:
\begin{align}
f(x^{k(i-1)+1})+g(Ax^{k(i-1)+1},z^{k(i-1)+1})<\infty.
\end{align}
By the definition of $P_r$, the above two conditions imply that robots at configuration $x^{k(i-1)+1}$ and $x^{k(i)+1}$ are both collision-free. Further, in both of these configurations, the pair of robots $x_i$ and $x_j$ is separated by the same plane $z_{ij}^{k(i-1)+1}$. Now, since the plane-separated regions are convex regions and linear interpolated points of a convex region stay inside this region, we conclude that any linear interpolated robot configurations between $x^{k(i-1)+1}$ and $x^{k(i)+1}$ are also collision-free. This proves that the piecewise linear robot trajectories $\FOUR{x_i^0}{x_i^{k(1)+1}}{x_i^{k(2)+1}}{\cdots}$ are valid, collision-free paths. In conclusion, we have shown that BC-ADMM is a suitable solver for multi-agent navigation problems. As illustrated in~\prettyref{fig:navigation} we conduct experiments on a standard benchmark with $N=300$ robots arranged in a circle with an obstacle in the middle, where each robot has their goal on the opposite side of the circle. All three solvers would generate similar robot trajectories, where robots are first congested in the middle and then push each other to find ways to reach the goals. The convergence history of all three algorithms is plotted in~\prettyref{fig:navigation-conv-E}. Our method converges after only $30$ seconds of computation. In comparison, GD cannot make sufficient progress even after $150$ seconds of computation. Instead, Newton's method takes a much smaller number of iterations to converge. But since each iteration is much more costly, their overall computational cost is higher. Finally, we compare the performance of a variant of~\prettyref{alg:practical} with the check for the two conditions below~\prettyref{ln:lazy-z} turned off. In other words, we always update $z^{k+1}\gets z(y^{k+1})$. In this case, there is no guarantee that the algorithm converges, unless we use the parameters $\beta_x,\beta_y,\beta$ satisfying~\prettyref{thm:BC-ADMM-case1} that are difficult to estimate. Using the same set of practical parameters, the performance of these two variants of BC-ADMM is profiled in~\prettyref{fig:navigation-conv-E-variant}. The result shows that this variant of BC-ADMM can frequently violate the constraints, leading to the energy value being $\infty$. Therefore, we cannot recover collision-free trajectories for the robots, rendering this variant of BC-ADMM an inappropriate solver for multi-agent navigation problems.
\begin{figure}[ht]
\centering
\setlength{\tabcolsep}{0px}
\begin{tabular}{cc}
\includegraphics[width=\linewidth]{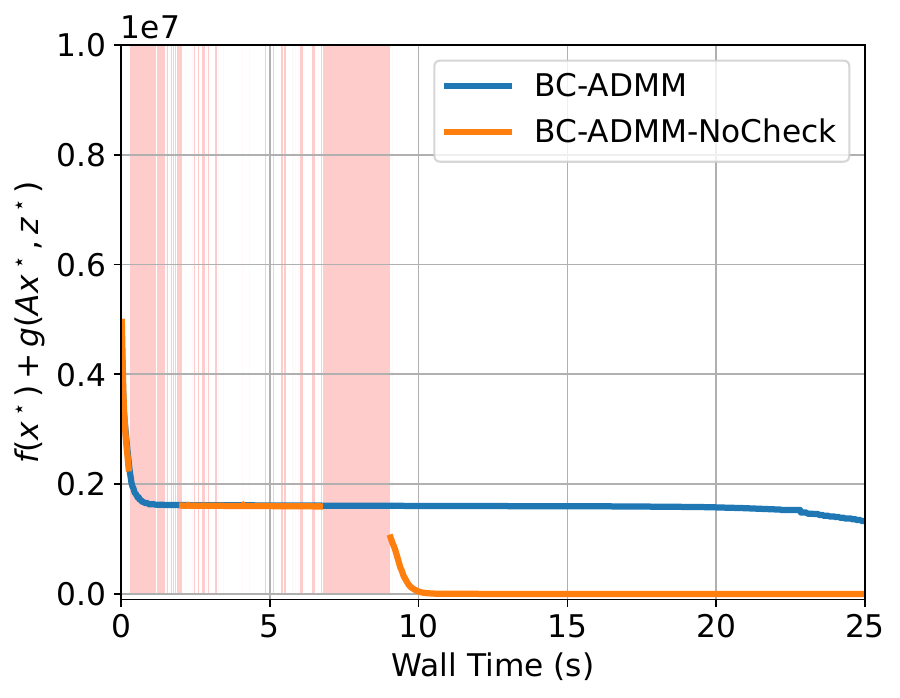}
\end{tabular}
\caption{\label{fig:navigation-conv-E-variant} We run multi-agent navigation using standard~\prettyref{alg:practical} (BC-ADMM) and a variant of~\prettyref{alg:practical} with the check for the two conditions below~\prettyref{ln:lazy-z} turned off (BC-ADMM-NoCheck). From the convergence history, we can see that, in many intermediary iterations, BC-ADMM-NoCheck violates the constraints, leading to the energy value being $\infty$, as marked by the vertical red lines. As a result, we cannot reconstruct collision-free trajectories for the robots, rendering BC-ADMM-NoCheck an inappropriate solver for multi-agent navigation problems.}
\end{figure}

\begin{figure}[ht]
\centering
\setlength{\tabcolsep}{0px}
\includegraphics[width=.99\linewidth]{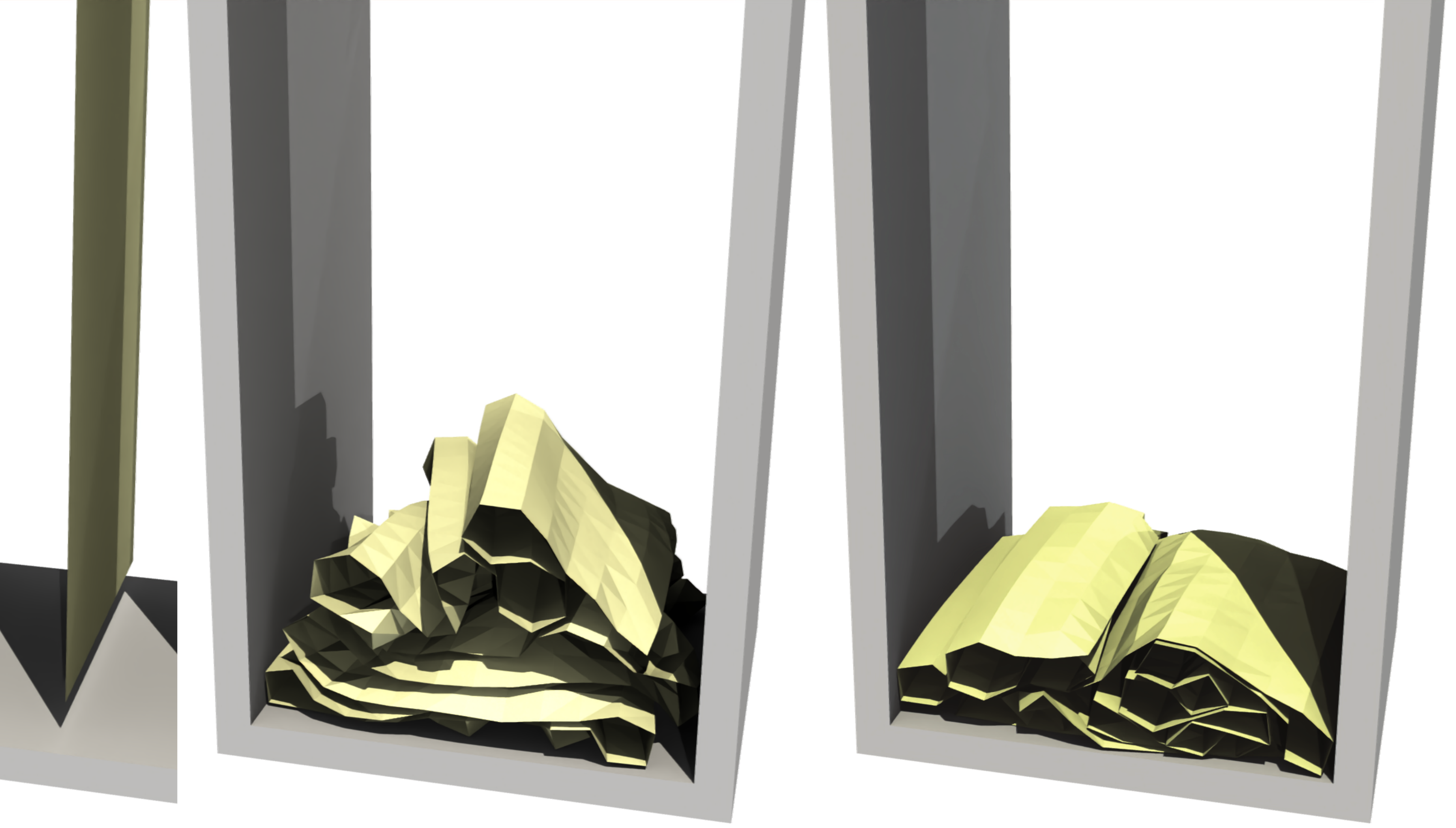}
\caption{\label{fig:object-settle}We drop a flat piece of cloth (left) on the ground and use BC-ADMM to compute its final configuration under a high (middle) and low (right) cloth thickness.}
\end{figure}
\begin{figure}[ht]
\centering
\setlength{\tabcolsep}{0px}
\begin{tabular}{cc}
\includegraphics[height=.37\linewidth]{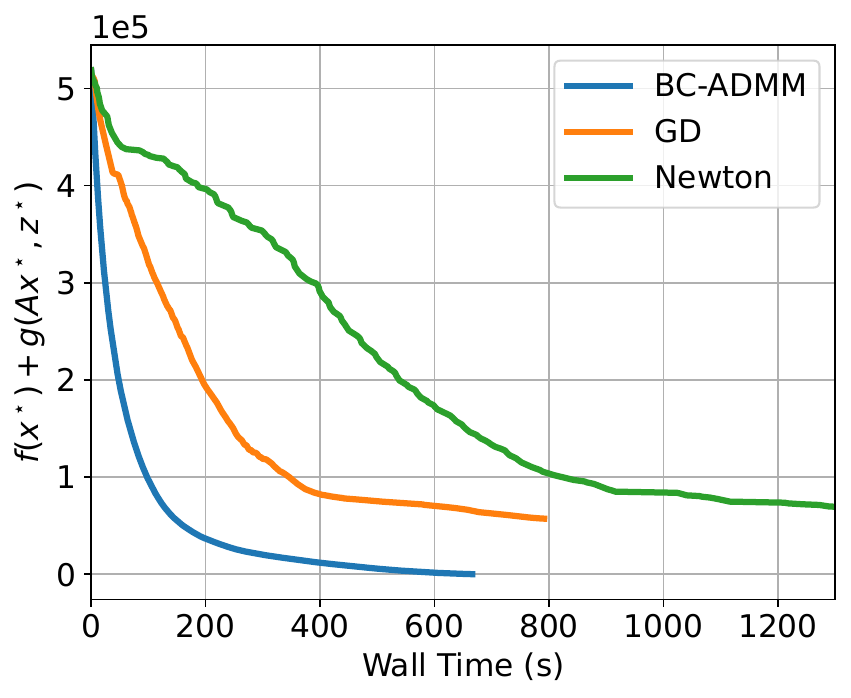}&
\includegraphics[height=.37\linewidth]{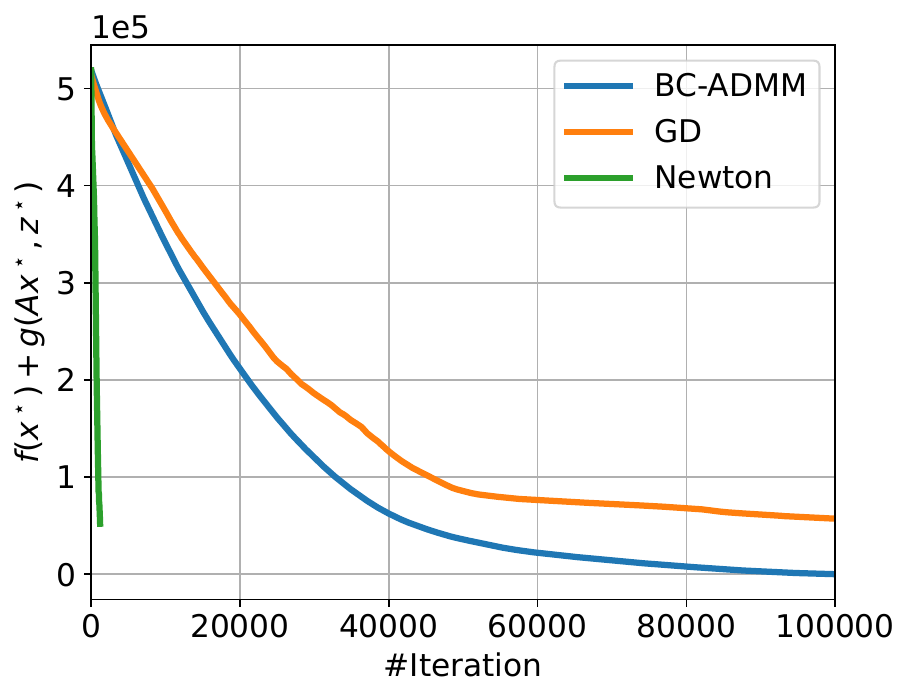}
\end{tabular}
\caption{\label{fig:cloth-conv-E} Objective function level for deformable object simulation, plotted against wall time (left) and iteration number (right). When BC-ADMM computes the ultimate pose for the piece of cloth, the two other solvers are still far from convergence.}
\end{figure}
\paragraph{Quasistatic Deformable Object Simulation} Deformable object simulation is a foundational module in robotic manipulation tasks~\cite{lin2022diffskill}. Thanks to our reformulated mass-spring material model $P_{k,l,\underline{l}}$, our BC-ADMM can be used as a deformable object simulator. Although ADMM algorithms have been used as physical simulators in prior works~\cite{overby2017admmpd,ouyang2020anderson}. However,~\citet{overby2017admmpd} used a soft penalty function to handle constraints, which is not guaranteed to resolve all interpenetrations. This is a well-known challenge in cloth simulation~\cite{harmon2009asynchronous,li2020incremental} where inter-penetrations are notoriously hard to resolve. Similarly,~\citet{ouyang2020anderson} did not consider any collision constraints or strain limits. Instead, our novel analysis and algorithm design allow all the constraints to be incorporated, making BC-ADMM a full-fledged and robust physical simulator. 

In our second benchmark, we consider a quasistatic simulation problem where a long flat piece of cloth is dropped into a box and our goal is to compute the ultimate configuration of the cloth. This problem is formulated as a potential energy minimization problem. We represent the cloth as a triangle mesh with $x$ being the concatenated vertex positions, and we denote each triangle of the mesh as consisting of three vertices, denoted $x_i,x_j,x_k$. We minimize the combined gravitational energy $f(x)=-x^TMg$ with $M$ being the mass matrix and $g$ being the gravitational acceleration. Further, the term $g$ is a summation over mass-spring energy terms $P_{k,l,\underline{l}}$ and collision potential terms $P_r$. Specifically, we model the box as a set of convex box-shaped obstacles. Between a pair of obstacle (the $l$th obstacle denoted as $o_l$) and the triangle $x_i,x_j,x_k$, we introduce a collision potential $P_r$ to prevent collision between the two geometric primitives. Another set of collision potential $P_r$ is introduced to prevent self-collisions. To model the material of cloth, we introduce two types of springs as done in~\cite{liu2013fast}, the 1-ring and 2-ring spring models, which model the stretch and bending resistance of the cloth, respectively. Note that in this benchmark, we do not have $f\neq0$, which violates our assumptions. But since the piece of cloth is confined to a box, we know that $f$ cannot go to $-\infty$, i.e. it is lower-bounded. Therefore, we can simply add a constant to $f$ to make it positive. Put together, our objective function is formulated as:
\begin{small}
\begin{equation}
\begin{aligned}
&f(x)=\underbrace{-x^TMg+\text{const.}}_{\text{gravitational-force}}\\
&g(Ax,z)\\
=&\sum_{\langle x_{ijk},o_l\rangle}\underbrace{P_r(\FOUR{x_i}{x_j}{x_k}{o_l},z_{ijkl})}_{\text{obstacle-collision}}+\\
&\sum_{\langle x_{ijk},x_{lmn}\rangle}\underbrace{P_r(\SIX{x_i}{x_j}{x_k}{x_l}{x_m}{x_n},z_{ijklmn})}_{\text{self-collision}}+\\
&\sum_{x_{ij}}\underbrace{P_{k,l,\underline{l}}(\TWO{x_i}{x_j},z_{ij})}_{\text{mass-spring}}.
\end{aligned}
\end{equation}
\end{small}
Our benchmark involves a mesh with $1111$ vertices, i.e. $x\in\mathbb{R}^{3333}$. Our formulation allows us to simulate a piece of cloth with different thickness, since the parameter $r$ in $P_r(y,z)$ can be modeled as the thickness parameter and the solutions under a high- and a low-thickness are illustrated in~\prettyref{fig:object-settle}. The convergence history of all three algorithms are summarized in~\prettyref{fig:cloth-conv-E}. In this setting, the iterative cost of Newton's method is much higher than the two other solvers, so that the first method outperforms. Again, BC-ADMM is the fastest to converge.

\begin{figure}[ht]
\centering
\includegraphics[width=\linewidth,trim=8cm 8cm 8cm 8cm,clip]{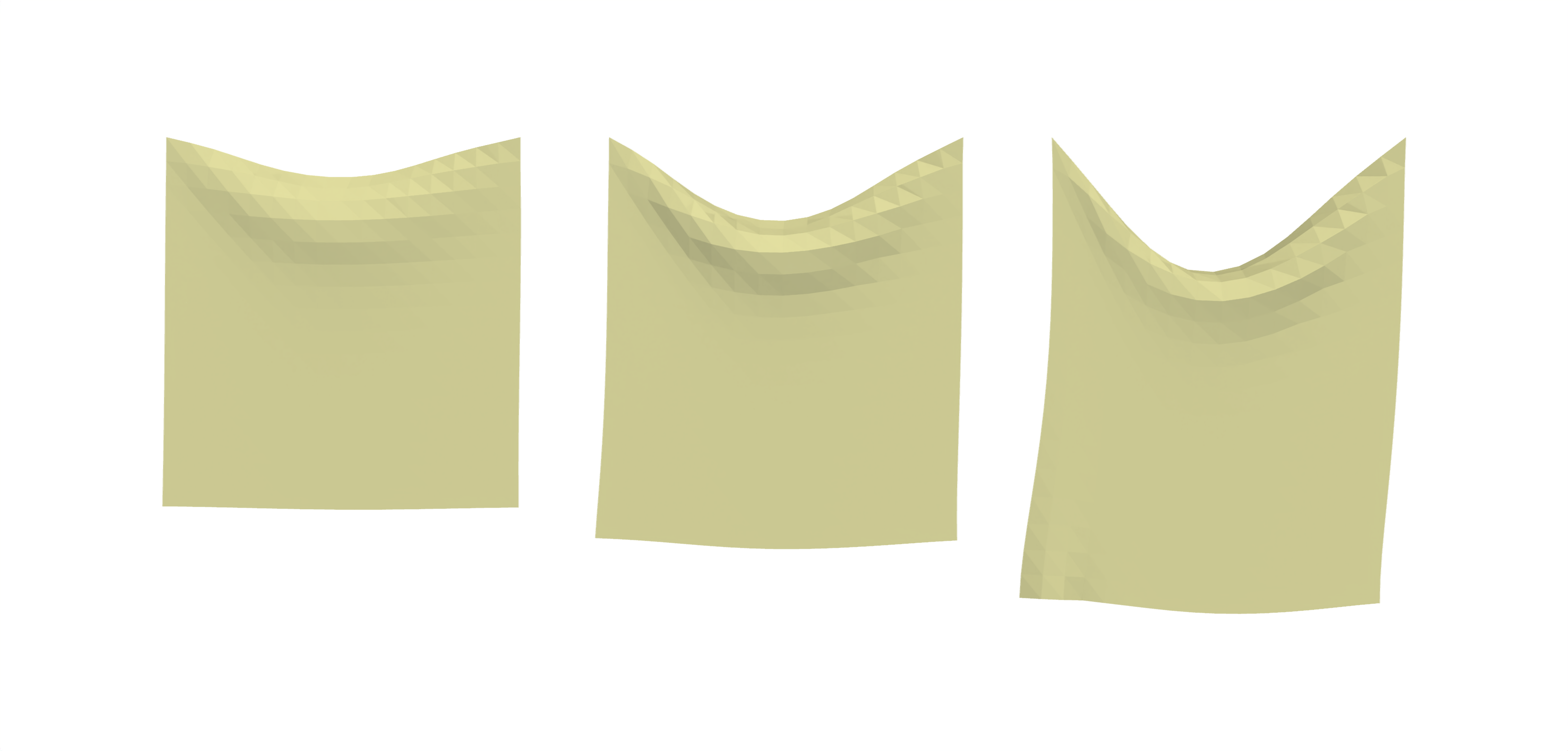}
\caption{\label{fig:inextensible}We drape three pieces of cloth with different extensible limit. From left to right, we use $(\bar{l}-l)/l=0.1, 0.2, 0.4$.}
\end{figure}
\begin{figure}[ht]
\centering
\includegraphics[width=\linewidth]{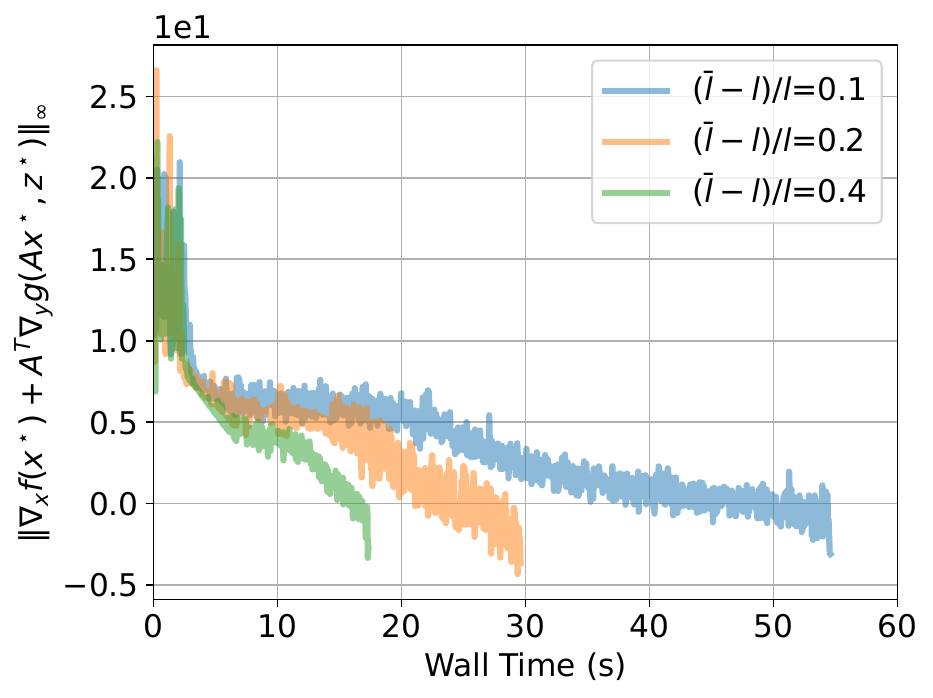}
\caption{\label{fig:inextensible-conv-E} With a more stringent extensible limit, BC-ADMM experiences slower convergence.}
\end{figure}
We further show that we can slightly extend our formulation to simulate nearly inextensible cloth~\cite{goldenthal2007efficient}, thus allowing our method to incorporate a wider range of material models. An inextensible cloth has a stretch limit of $\bar{l}>l$ such that a spring between $y_i$ and $y_j$ has the maximal length of $\bar{l}$. This behavior can be implemented by an additional energy term denoted as $P_{\bar{l}}(y)=-\log_\epsilon(\bar{l}-\|y_i-y_j\|)$. This energy term is convex in $y$ and thus trivially bi-convex, which is compatible with our assumption. Further, we note that $P_{\bar{l}}(y)$ is not differentiable and has a non-smooth singularity when $y_i=y_j$. But due to the strain limit in the mass-spring energy $P_{k,l,\underline{l}}$, we know that $\|y_i-y_j\|\geq\underline{l}$, so that this singularity is removable. Combined, the inextensible mass-spring potential is denoted as $P_{k,l,\underline{l},\bar{l}}(y,z)=P_{k,l,\underline{l}}(y,z)+P_{\bar{l}}(y)$. In~\prettyref{fig:inextensible}, we drape three pieces of cloth with different extensible limits. As shown in~\prettyref{fig:inextensible-conv-E}, a more stringent extensible limit leads to slower convergence, which is consistent with the observation in~\cite{goldenthal2007efficient}.

\paragraph{UAV Trajectory Optimization} In this benchmark, we solve the classical problem of optimizing a smooth and collision-free UAV trajectory~\cite{8462878,ni2021robust} by minimizing the trajectory-integrated jerk under various constraints. Specifically, a UAV trajectory is represented by a piecewise B\`ezier curve with each curve represented as a $5$th-order polynomial. Consecutive B\`ezier curves are $C^2$-smoothly connected with matching derivatives up to second order. We represent all the B\`ezier curves using their control points concatenated into a vector $x$, where we use $A_{ij}$ to denote the $ij$th block of matrix $A$ that extracts the $j$th control points of the $i$th B\`ezier curve. Similarly, $A_i$ is the $i$th block of matrix $A$ that extracts all the $6$ control points of the curve. There are obstacles in the environment, each represented using a convex object. To prevent the B\`ezier curve from intersecting the obstacle, we introduce a collision potential $P_r$ for each pair of B\`ezier curve $(A_ix)$ and convex obstacle $o_l$. Our collision potential follows a similar idea as~\cite{ni2021robust} that ensures all the control points and the obstacle lay on two different sides of the separating plane. Now since the actual curve is bounded by the convex hull of the control points, our collision potential ensures the entire curve is collision-free. Again, we use a collision detector to progressively introduce collision terms into the optimizer. We further introduce velocity and acceleration limits to ensure that the trajectory is executable. Specifically, to introduce a velocity upper bound of $\bar{v}$, we compute the derivative of each curve and extract its control points, with the $j$th control point of the $i$th derivative curve's denoted as $A_{ij}'x$ and we bound all the control points to be within the range $\bar{v}$ by introducing the log-barrier function $-\log_\epsilon(\bar{v}-\sqrt{\|A_{ij}'x\|^2+\epsilon})$, where we introduce a small regularization $\epsilon$ to ensure the function is smooth. A similar technique can be used to adopt the acceleration bound. Put together, we use the following problem definition:
\begin{equation}
\begin{aligned}
f(x)=&\underbrace{\frac{1}{2}x^TJx}_{\text{trajectory-integrated jerk}}\\
g(Ax,z)=&\sum_{\langle A_i,o_l\rangle}\underbrace{P_r(\TWO{A_ix}{o_l},z_{il})}_{\text{obstacle-collision}}+\\
&\sum_{A_{ij}'}\underbrace{-\log_\epsilon(\bar{v}-\sqrt{\|A_{ij}'x\|^2+\epsilon})}_{\text{velocity-bound}},
\end{aligned}
\end{equation}
where $J$ is the Hessian matrix of the trajectory-integrated jerk. Our formulation is the same as~\cite{ni2021robust}, which is more robust than prior work~\cite{8462878} by ensuring collision-free properties using log-barrier functions $P_r$. However, it is known that the log-barrier functions lead to slow convergence. Therefore,~\citet{ni2022robust} proposed to accelerate the solver by using ADMM. Unfortunately, their formulation does not allow splitting the collision potential terms $P_r$ into sub-problems, i.e., all the $P_r$ terms are treated as part of $f$ instead of $g$, which requires their solver to handle all the collision potentials together and use a line-search to upper bound the overall step size. Instead, our formulation allows a fully decoupled treatment where $P_r$ terms are handled separately using a $y$-subproblem. As illustrated in~\prettyref{fig:UAV}, we optimize a very complex trajectory that has the UAV flying through a series of circle-shaped obstacles of a varying radius, where the UAV trajectory is represented using 393 B\`ezier curves. The convergence history of all three algorithms is plotted in~\prettyref{fig:UAV-conv-E} in terms of energy level and gradient norm. In this example, BC-ADMM has almost the same performance as Newton's method in terms of wall time, which is much faster than GD. However, since each iteration of BC-ADMM is much faster than Newton's method, our algorithm is more amenable to early stop and returns a solution with moderately high quality, which is a common practice in the community of UAV trajectory optimization. From the plot of the gradient norm, we can see that BC-ADMM is the best performer. Although both BC-ADMM and GD are first-order algorithms, GD exhibits very noisy gradients, while BC-ADMM consistently reduces the gradient norm.
\begin{figure}[ht]
\centering
\includegraphics[width=\linewidth,trim=1cm 16cm 1cm 16cm,clip]{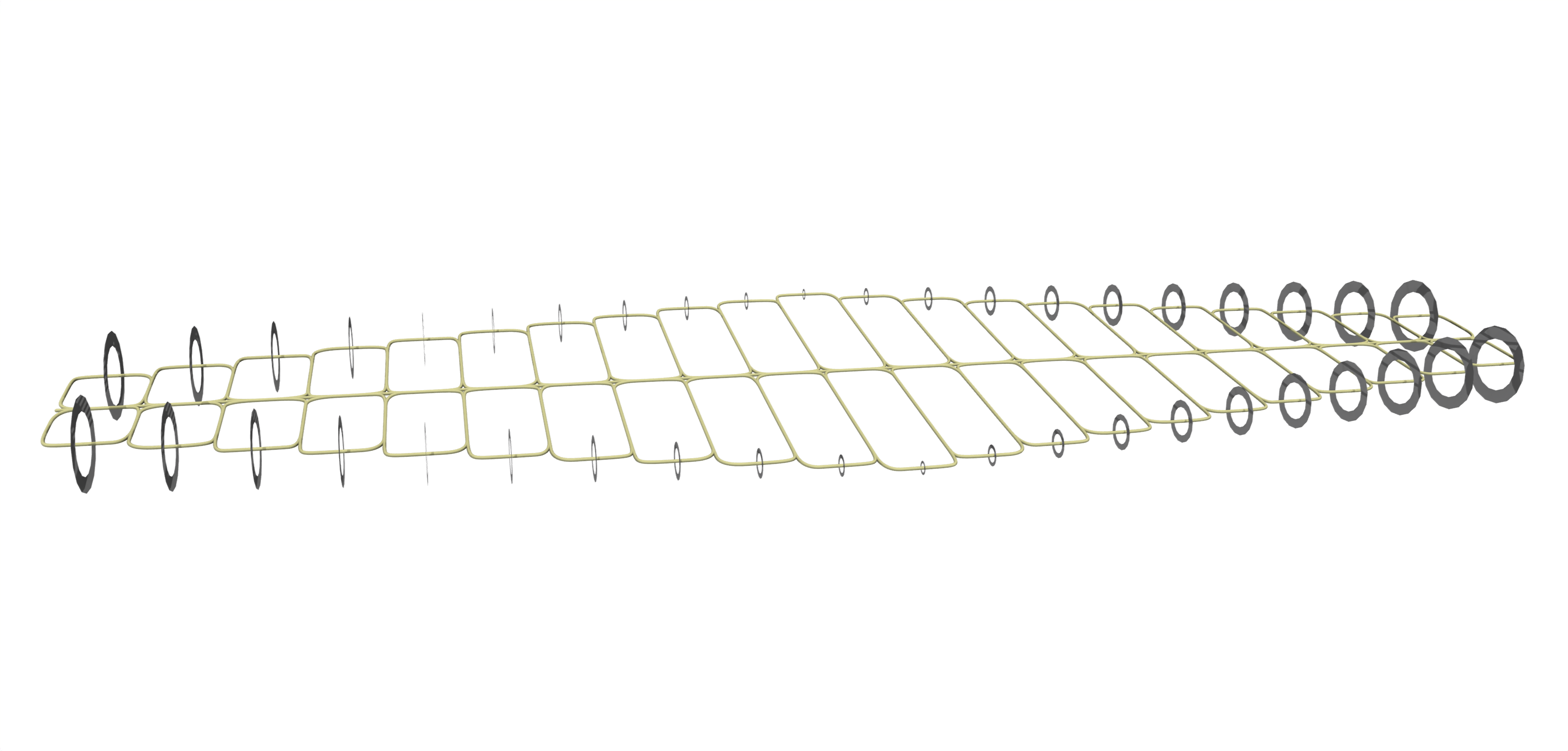}
\includegraphics[width=\linewidth,trim=1cm 16cm 1cm 16cm,clip]{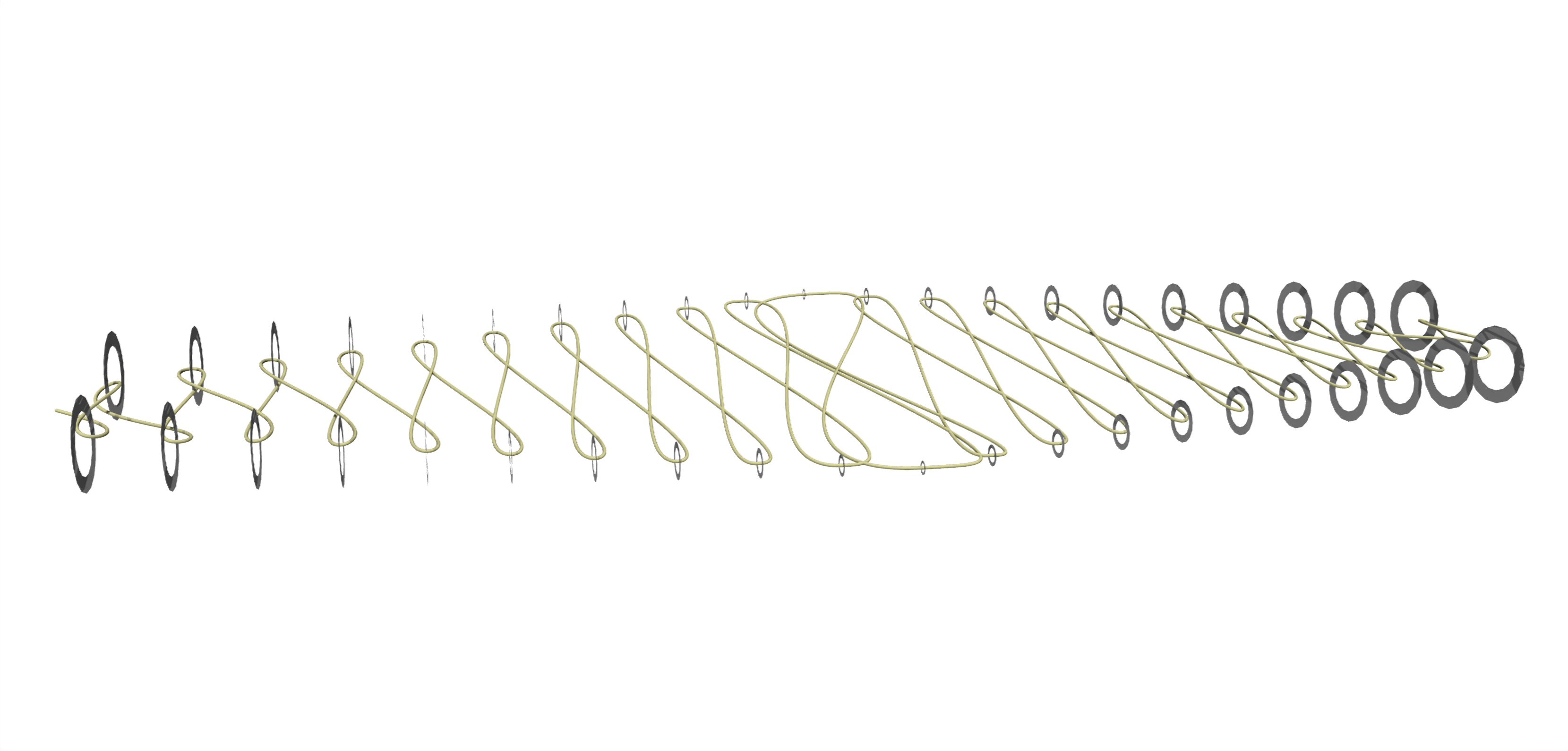}
\caption{\label{fig:UAV}We optimize a complex UAV trajectory where the UAV flies through a series of circle-shaped obstacles of varying radius. The top figure shows our initial trajectory, and the bottom figure shows the optimized configuration.}
\end{figure}
\begin{figure}[ht]
\centering
\setlength{\tabcolsep}{0px}
\begin{tabular}{cc}
\includegraphics[height=.37\linewidth]{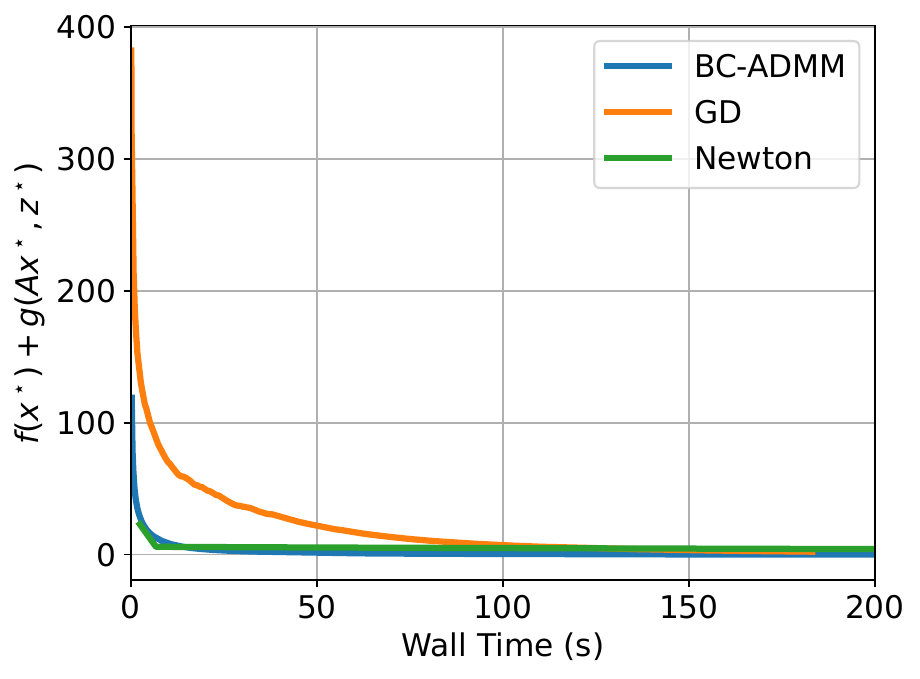}&
\includegraphics[height=.37\linewidth]{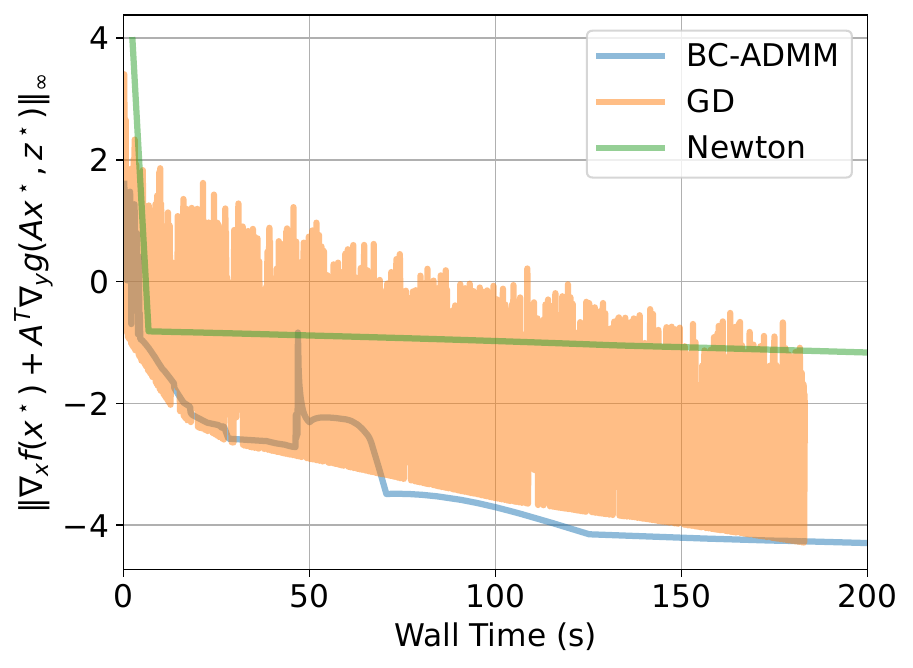}
\end{tabular}
\caption{\label{fig:UAV-conv-E} We plot the convergence history of the three algorithms in optimizing the UAV trajectory, where the left figure shows the energy value and the right figure shows the gradient norm. BC-ADMM has similar performance as Newton's method, significantly outperforms GD.}
\end{figure}

\paragraph{Soft Robot Gripper Simulation} We solve the problem proposed in~\cite{fang2018geometry}, where a soft robot is actuated by cables and serves as fingers of a gripper, treating ADMM as a quasistatic soft robot simulator. In our problem setting, the robot is discretized using a finite element mesh, and the internal elastic energy is discretized using our modified, bi-convex ARAP energy $P_{F,\epsilon}$. To model the cable-driven actuator, we notice that a cable segment connecting two points $x_i$ and $x_j$ can be exactly modeled as a convex potential energy:
\begin{align}
P_\text{cable,a}(x_i,x_j)=a\sqrt{\|x_i-x_j\|^2+\epsilon},
\end{align}
where $\epsilon$ is a small regularization to ensure smoothness and $a$ is the positive constant representing the internal tension force magnitude in the cable. It can be verified that the negative gradient of $P_\text{cable,a}$ is the force vector with magnitude $a$ and pointing along the line segment between $x_i$ and $x_j$. As usual, all the collisions are handled using our collision potential term $P_r$, and we use collision detectors to progressively introduce these potential terms. Put together, our problem setup is summarized below:
\begin{small}
\begin{equation}
\begin{aligned}
&f(x)=\underbrace{-x^TMg+\text{const.}}_{\text{gravitational-force}}\\
&g(Ax,z)\\
=&\sum_{\langle x_{ijk},o_l\rangle}\underbrace{P_r(\FOUR{x_i}{x_j}{x_k}{o_l},z_{ijkl})}_{\text{obstacle-collision}}+\\
&\sum_{\langle x_{ijk},x_{lmn}\rangle}\underbrace{P_r(\SIX{x_i}{x_j}{x_k}{x_l}{x_m}{x_n},z_{ijklmn})}_{\text{self-collision}}+\\
&\sum_{x_{ijkl}}\underbrace{P_{F,\epsilon}(\FOUR{x_i}{x_j}{x_k}{x_l},z_{ijkl})}_{\text{ARAP-material-model}}+\\
&\sum_{x_{ij}}\underbrace{P_\text{cable,a}(x_i,x_j)}_{\text{cable-driven actuator}}.
\end{aligned}
\end{equation}
\end{small}
In our benchmark, we model the soft robot gripper to grasp another soft ball, where the ball is modeled using ARAP material as well. We setup the ball to have a much smaller stiffness than the finger (we use $100:1$ stiffness ratio), so that the fingers can impose large grasping forces on the soft ball, forming a firm grip and causing extreme deformations in the ball. Starting from a trivial initialization, our goal is to compute the ultimate pose of the fingers and the soft ball. This is a challenging benchmark where large forces need to be transferred from the gripper to the ball, through collision potential terms $P_r$. Further, the extreme deformations can cause complex collisions between objects, and any violations of collision constraints can lead to infinite energy values and failure of optimization. Again, we show the initial and optimized configurations of this benchmark in~\prettyref{fig:Finger3D} and we plot the convergence history in~\prettyref{fig:Finger3D-conv-E}. Our BC-ADMM significantly outperforms the two other algorithms in terms of energy value and gradient norm. Thanks to our modified ARAP energy with strain limiting, BC-ADMM can simulate extremely large deformations without inverted elements as compared with the standard ARAP formulation, as illustrated in~\prettyref{fig:ARAP-Flip}.
\begin{figure}[ht]
\centering
\scalebox{0.9}{
\includegraphics[width=1.05\linewidth]{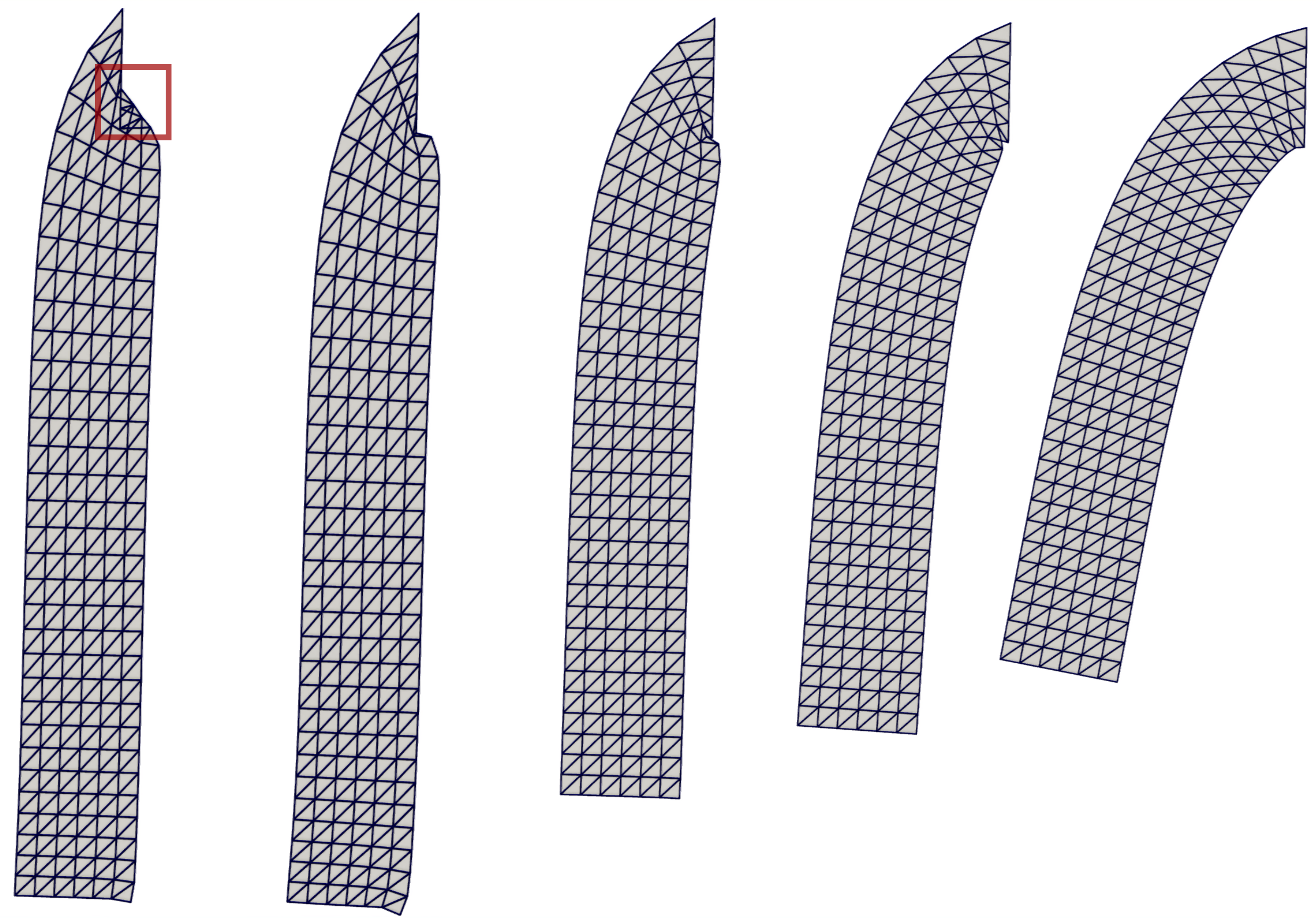}
\put(-60,40){\tiny$\|g\|=2.5$}
\put(-100,30){\tiny$\|g\|=5$}
\put(-145,17){\tiny$\|g\|=10$}
\put(-200,-5){\tiny$\|g\|=20$}
\put(-250,-5){\tiny$\|g\|=20$}
\put(-252,-12){\tiny{No Strain Limit}}}
\caption{\label{fig:ARAP-Flip}We simulate a soft beam under different gravitational coefficients (marked below) and compare our modified ARAP energy $P_{F,\epsilon}$ with the standard ARAP energy without strain limiting by setting $P_{F}=\|F-R\|_F^2$. Under a large gravitational force $\|g\|=20$, the result without strain limiting exhibits an inverted element (red box), while our ARAP energy with strain limiting always predicts faithful deformation results.}
\end{figure}
\begin{figure}[ht]
\centering
\setlength{\tabcolsep}{0px}
\begin{tabular}{cc}
\includegraphics[width=.5\linewidth,trim=33cm 0cm 34cm 0cm,clip]{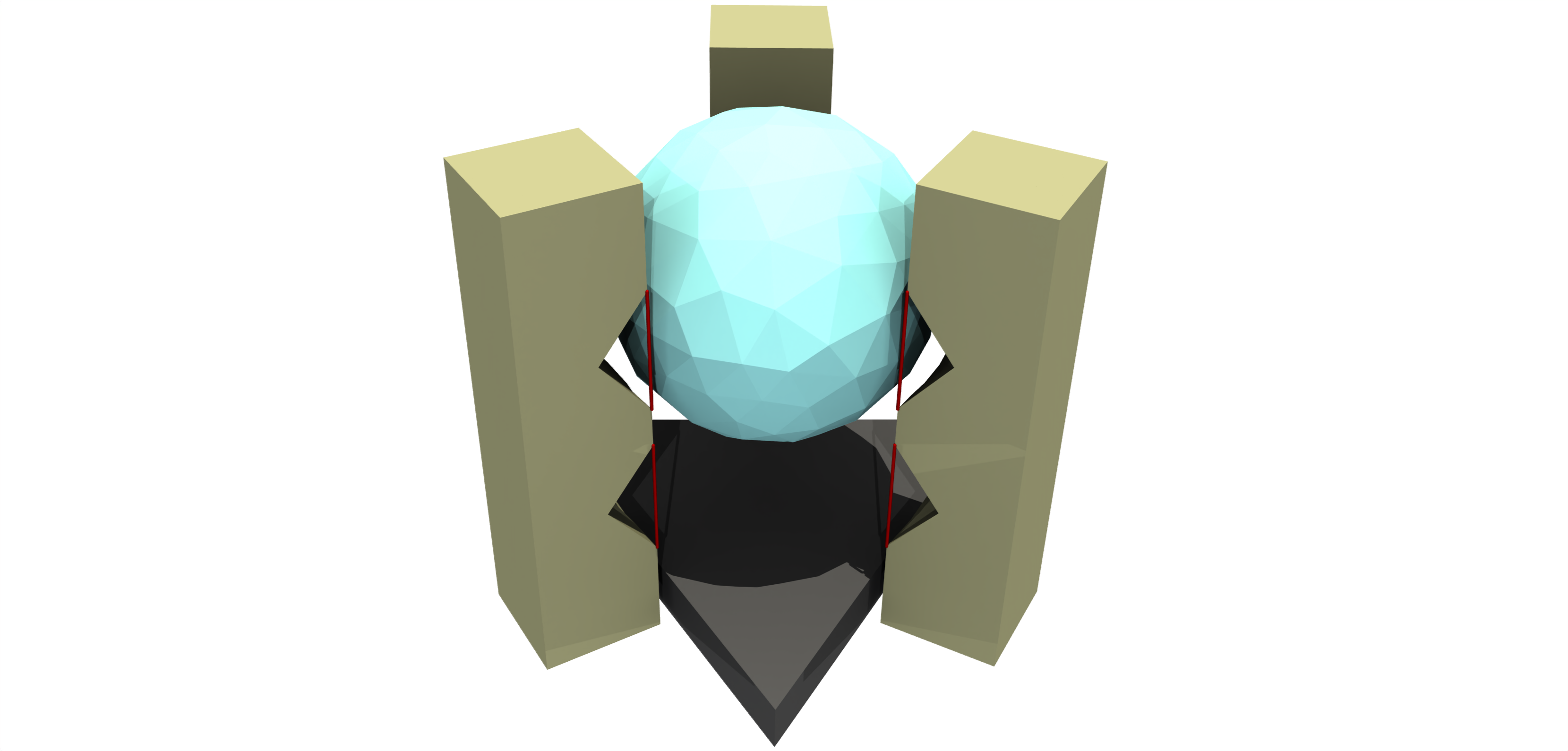}&
\includegraphics[width=.5\linewidth,trim=34cm 0cm 36cm 0cm,clip]{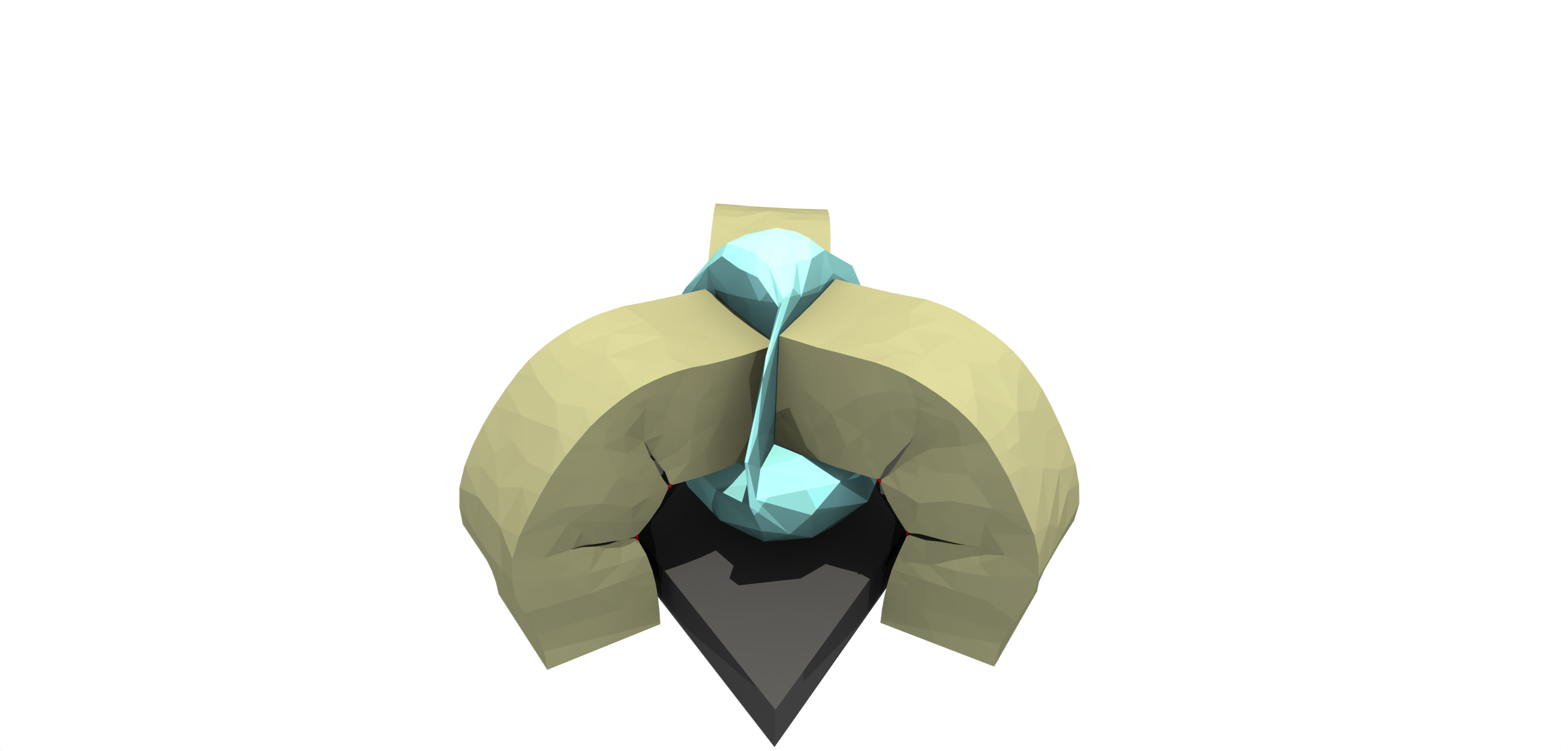}
\end{tabular}
\caption{\label{fig:Finger3D} We simulate a 3-fingered, cable-actuated (red line segment) soft robot gripper (brown) to grasp a soft-ball (blue), the initial and optimized configuration are shown on the left and right, respectively. The ball is much softer than the gripper, so that the gripper can impose extreme forces and cause significant deformations.}
\end{figure}
\begin{figure}[ht]
\centering
\setlength{\tabcolsep}{0px}
\begin{tabular}{cc}
\includegraphics[height=.37\linewidth]{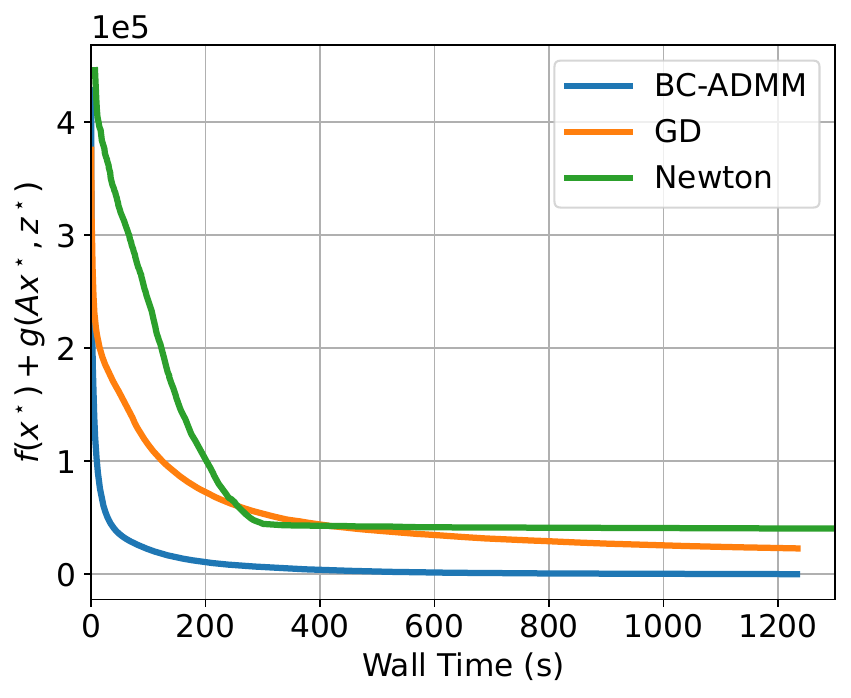}&
\includegraphics[height=.37\linewidth]{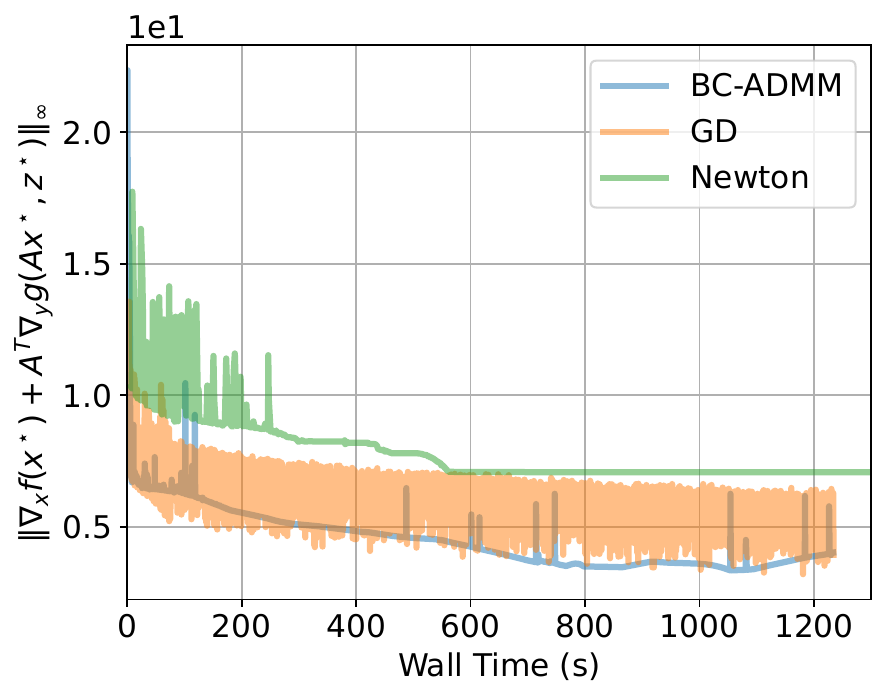}
\end{tabular}
\caption{\label{fig:Finger3D-conv-E} We plot the convergence history of the three algorithms in the soft robot gripper simulation benchmark, where the left figure shows the energy value and the right figure shows the gradient norm. BC-ADMM outperforms the two other algorithms in both criteria.}
\end{figure}

\begin{figure*}[ht]
\centering
\includegraphics[width=.24\linewidth]{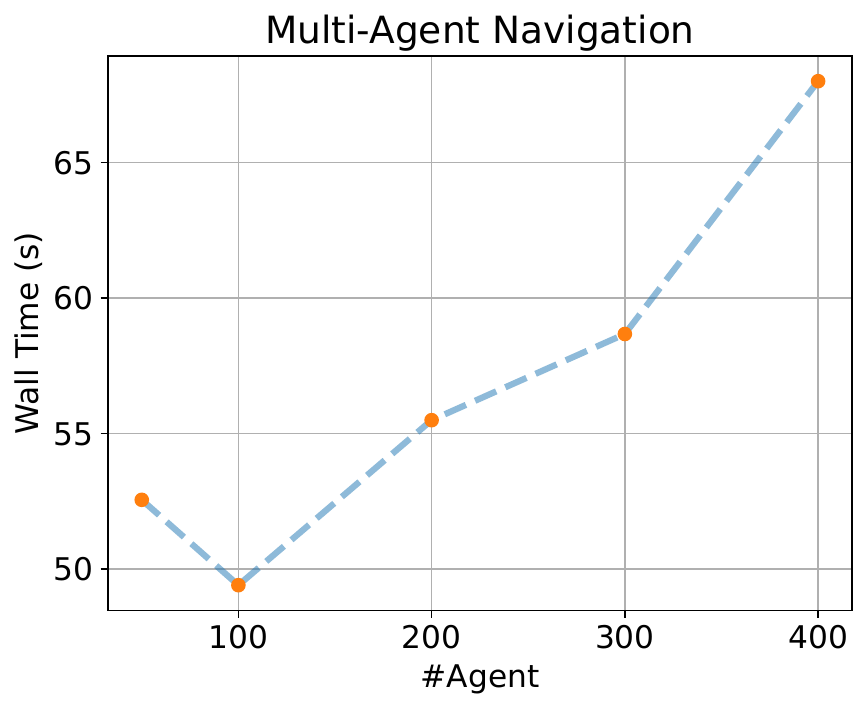}
\includegraphics[width=.24\linewidth]{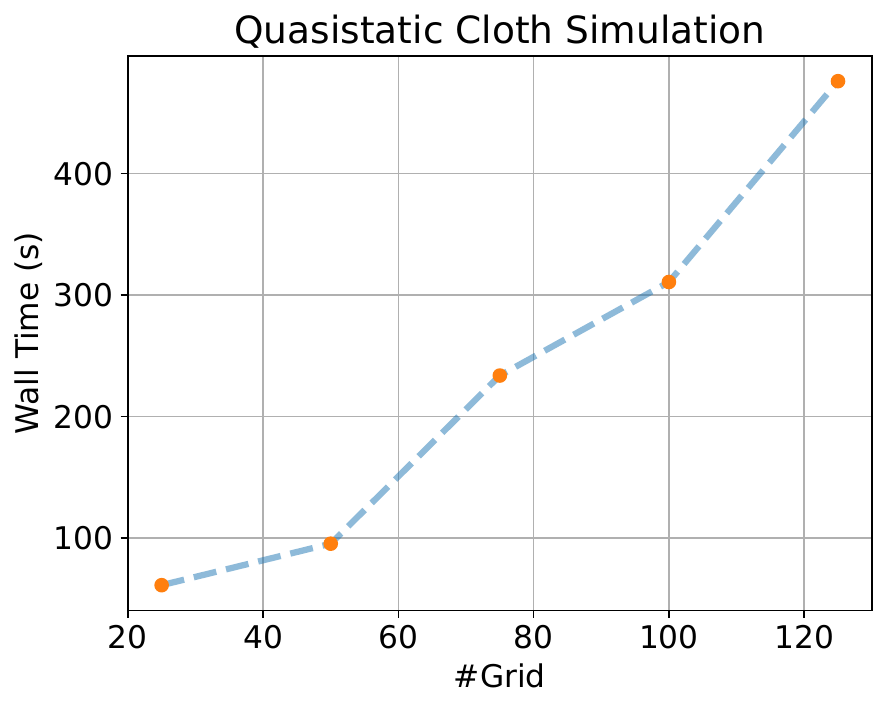}
\includegraphics[width=.24\linewidth]{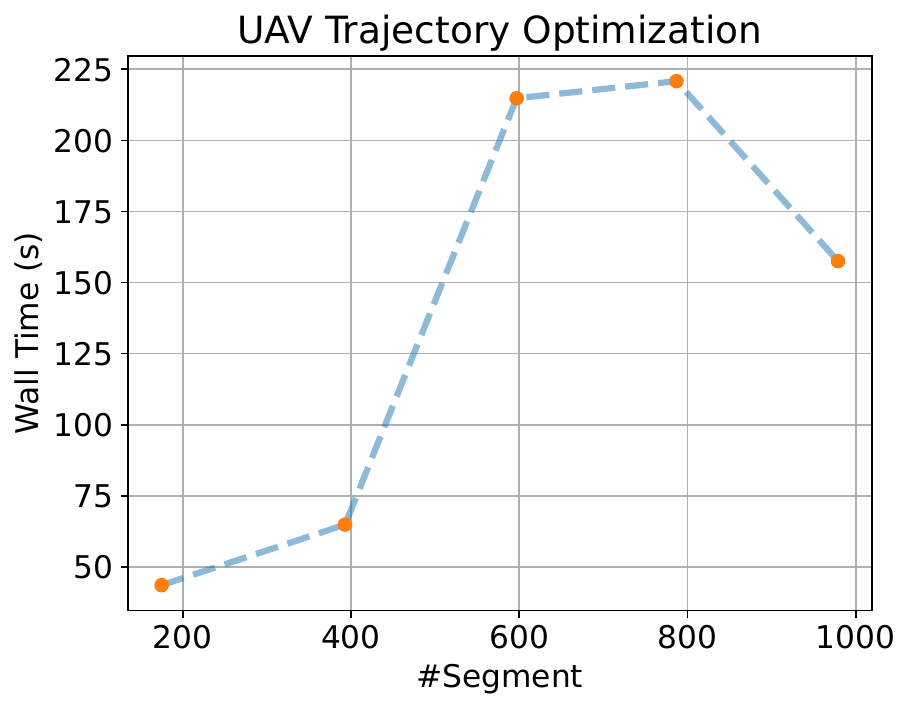}
\includegraphics[width=.24\linewidth]{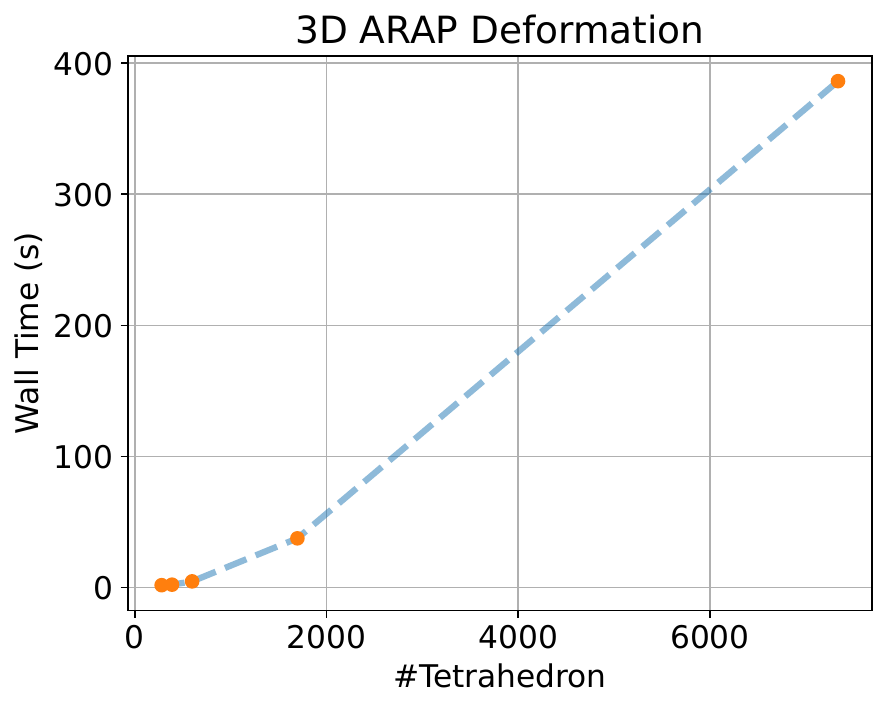}
\caption{\label{fig:scalability} We plot the computational cost as a function of problem size across four tasks. For the multi-agent navigation problem, we use the agent setup shown in~\prettyref{fig:navigation}, where a varying number of robots are initialized in a circular configuration. In the quasistatic cloth simulation task, following the setup in~\prettyref{fig:object-settle}, we vary the cloth size by increasing the number of rectangular grid cells along the vertical dimension. For the UAV trajectory optimization task, based on the configuration in~\prettyref{fig:UAV}, we vary the number of circular obstacles. As the number of obstacles increases, the UAV trajectory is subdivided into a corresponding number of B\`ezier curve segments. Lastly, for ARAP deformation, we adopt the 3D version of the setup in~\prettyref{fig:ARAP-Flip}, where we vary problem size by discretizing the 3D beam using different numbers of tetrahedral elements.}
\end{figure*}
\subsection{Scalability}
Empirically, larger problem instances lead to an accumulation of log-barrier terms, which increases the numerical stiffness of the optimization problem and results in a greater number of iterations to reach convergence. We examine how computational cost scales with problem size across four benchmark tasks. While it is common practice in robotics applications to terminate optimization after a fixed number of iterations, such comparisons only capture per-iteration cost and fail to reflect true convergence behavior. Instead, we evaluate the total computational cost required for convergence. To do so, we first run the optimizer for a large number of iterations to ensure full convergence and obtain a reference solution, denoted by $\TWO{x^{\star\star}}{z^{\star\star}}$. We then define convergence as the point at which the relative objective value is within $1\%$ of this reference:
\begin{align*}
\frac{|[f(x^\star)+g(Ax^\star,z^\star)]-[f(x^{\star\star})+g(Ax^{\star\star},z^{\star\star})]|}{|f(x^{\star\star})+g(Ax^{\star\star},z^{\star\star})|} < 1\%.
\end{align*}
This criterion is sufficient for nearly all practical applications, as we observe no measurable differences in robot behavior or object deformation once the threshold is met. Our findings are summarized in~\prettyref{fig:scalability}. In some tasks, such as multi-agent navigation and UAV trajectory optimization, we even observe a decrease in computational cost with increasing problem size. Nevertheless, the overall scaling trend across all tasks remains approximately linear with respect to problem size.

\begin{figure}[ht]
\centering
\includegraphics[width=.95\linewidth]{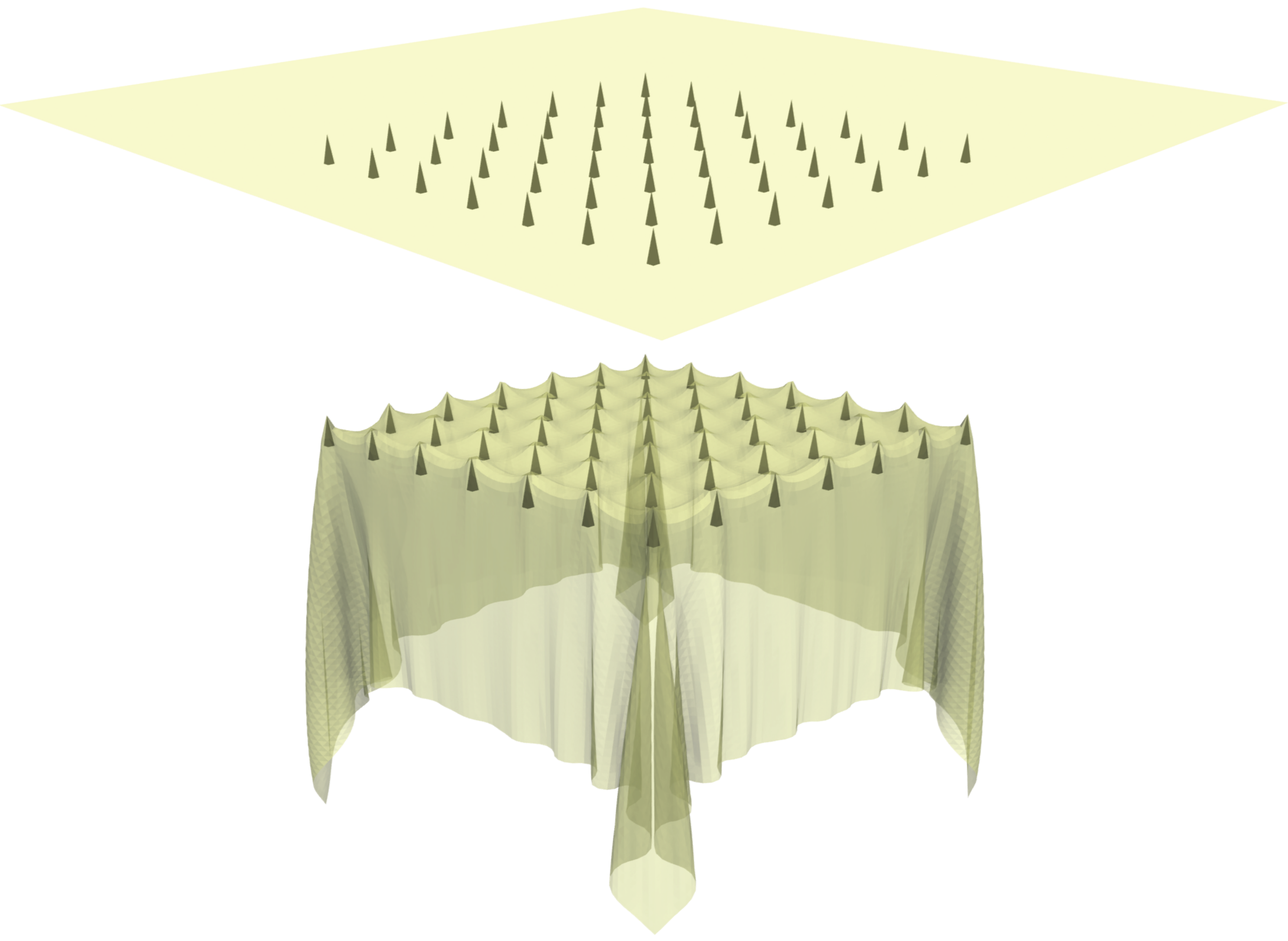}
\caption{\label{fig:3DPin}The initial (top) and final (bottom) iteration during the quasistatic optimization of a piece of cloth dropped onto a grid of needles.}
\end{figure}
\begin{figure*}[ht]
\centering
\begin{tabular}{cccc}
\includegraphics[height=.185\linewidth]{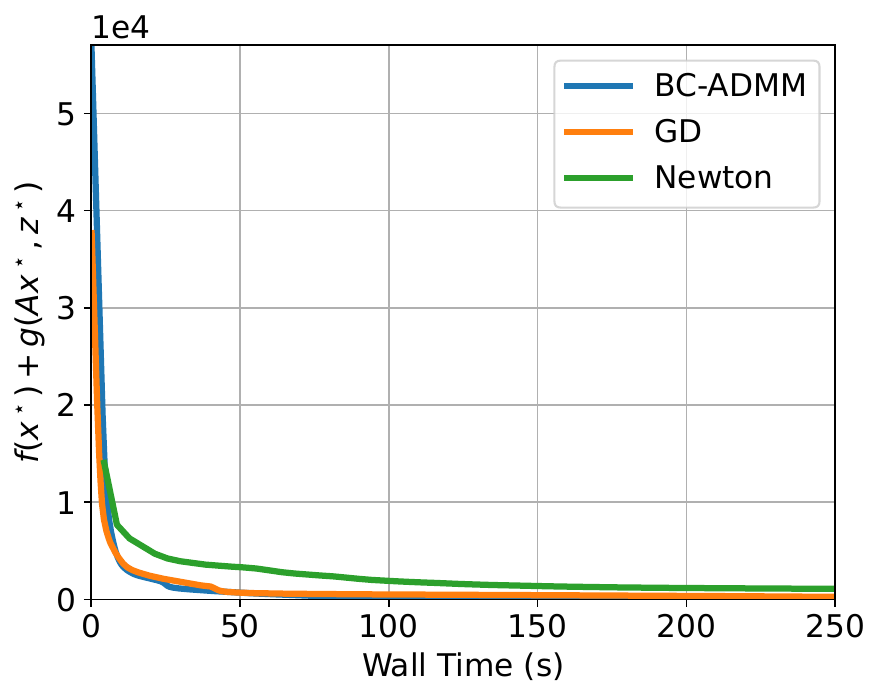}&
\includegraphics[height=.185\linewidth]{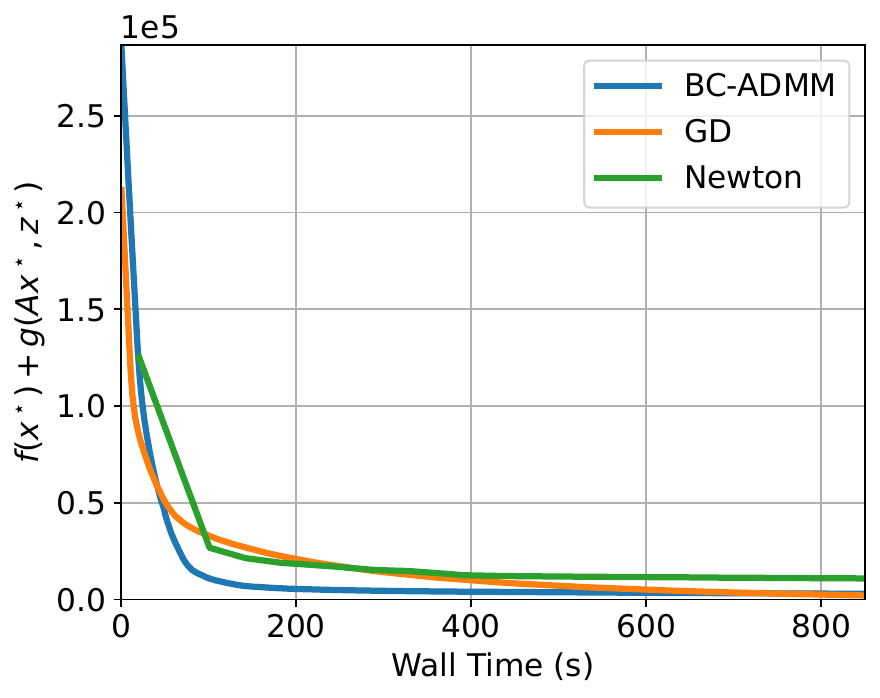}&
\includegraphics[height=.185\linewidth]{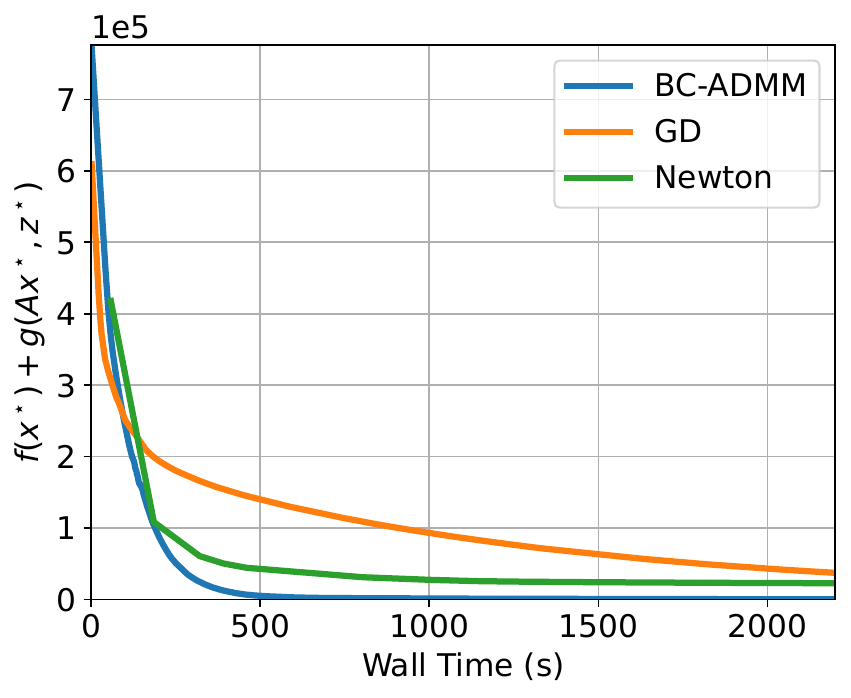}&
\includegraphics[height=.185\linewidth]{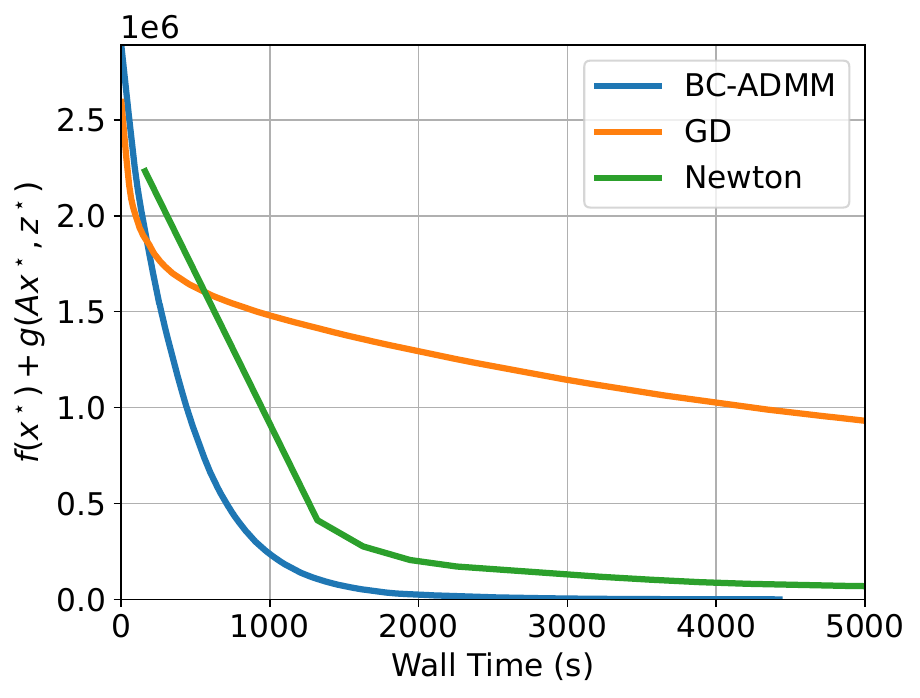}
\end{tabular}
\caption{\label{fig:3DPin-Scalability} The convergence history of the three algorithms in the example of~\prettyref{fig:3DPin}. From left to right, we use a cloth resolution of $30^2$, $60^2$, $90^2$, and $120^2$ vertices. Our results show that the performance advantage of our method becomes more pronounced at higher resolutions.}
\end{figure*}
We further analyzed the scalability of all three methods using the example shown in~\prettyref{fig:3DPin}. In this benchmark, a piece of cloth is dropped onto a grid of needles---a standard scenario for evaluating the robustness and scalability of cloth simulation algorithms. By varying the resolution of the cloth mesh, we profile the convergence behavior of all three methods, as illustrated in~\prettyref{fig:3DPin-Scalability}. Our results show that the performance advantage of our method becomes more pronounced at higher resolutions. This is primarily due to the fact that the per-iteration complexity of BC-ADMM scales approximately linearly with problem size, while the direct matrix factorization required by Newton’s method scales superlinearly. Additionally, the global line-search strategies employed by both Newton’s method and gradient descent tend to become more restrictive as the problem size increases. Finally, we note the inherent challenge in ensuring fair comparisons across varying problem sizes, as different solvers may converge to different local solutions. Furthermore, a rigorous theoretical characterization of the scalability of ADMM-based optimization methods remains an open problem in the field.

\begin{figure}[ht]
\includegraphics[width=.49\linewidth]{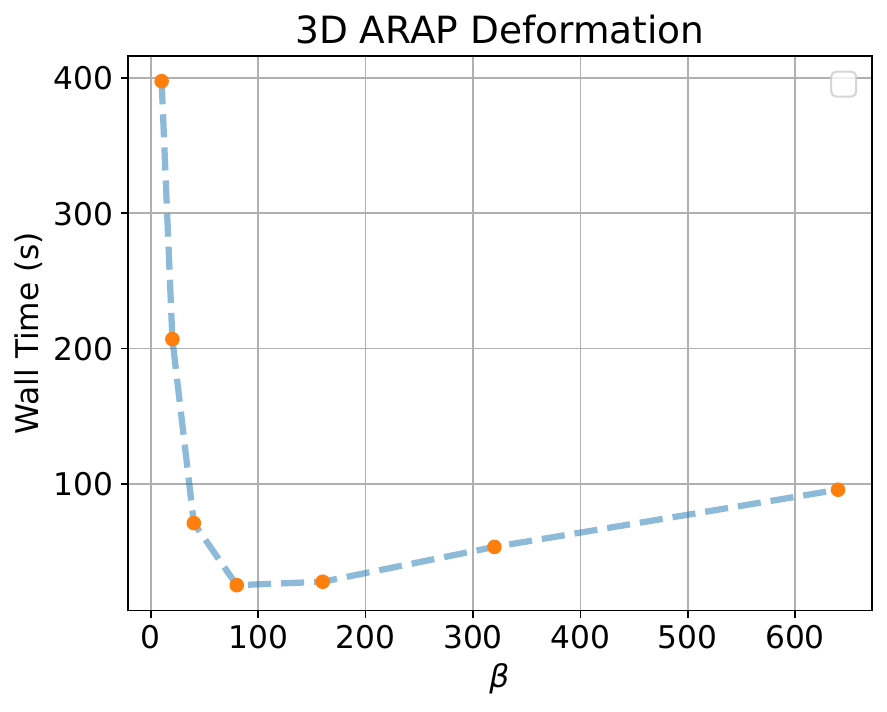}
\includegraphics[width=.49\linewidth]{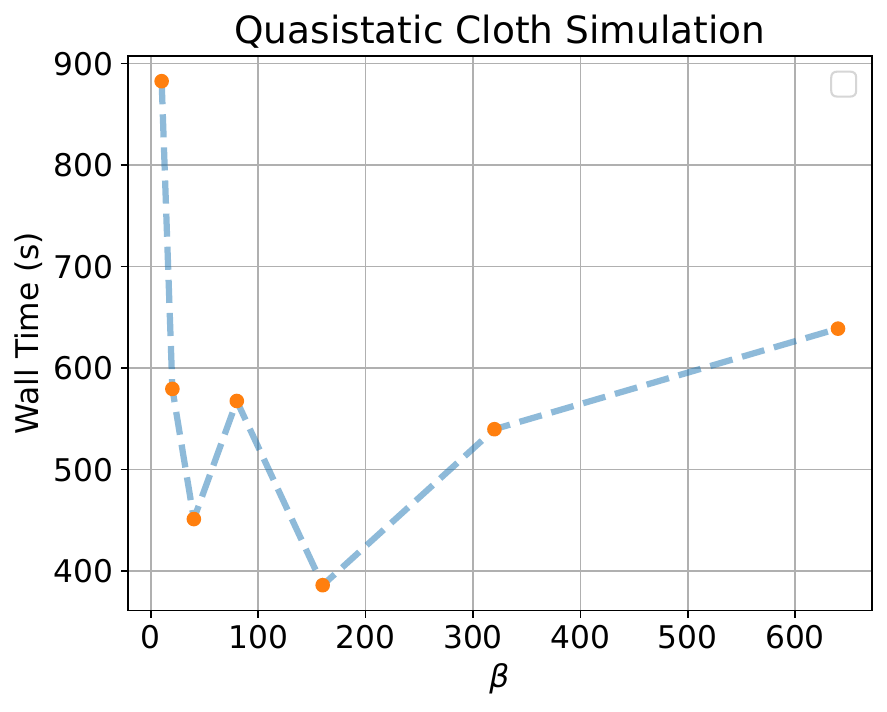}
\caption{\label{fig:beta}We profile the cost to converge under a wide range of parameter choices for two of our benchmarks.}
\end{figure}
\subsection{Parameter Sensitivity}
Our main result in~\prettyref{ass:bivariable-case1} indicates that the optimizer can be sensitive to all three parameters: $\beta_x$, $\beta_y$, and $\beta$. In practice, however, we find that~\prettyref{alg:practical} provides a robust mechanism for automatically tuning $\beta_x$ and $\beta_y$ to ensure convergence. In all experiments, we initialize $\beta_y = 1$ and allow the algorithm to adaptively adjust both $\beta_x$ and $\beta_y$ during optimization. In contrast, the algorithm remains sensitive to the global penalty parameter $\beta$, which requires manual tuning across different benchmarks. This sensitivity is consistent with observations in prior work, such as~\cite{overby2017admmpd}. In~\prettyref{fig:beta}, we compare convergence behavior for two examples under varying values of $\beta$. The results reveal the existence of a sweet spot—where the optimizer converges most efficiently—and demonstrate degraded convergence when $\beta$ is set either too high or too low. For deformable object simulation, we recommend following the empirical guideline proposed in~\cite{overby2017admmpd}, setting $\beta$ to the square root of the average material stiffness, i.e., $\beta = \sqrt{k}$ in $P_{k,l,\underline{l}}$. However, we note that the stiffness of the log-barrier function can vary dynamically depending on the proximity between objects and obstacles. Therefore, for other applications such as multi-agent navigation and UAV trajectory optimization, we simply use a fixed value of $\beta = 100$.
\section{Conclusion}
In this paper, we introduce BC-ADMM, a new optimizer that can solve non-convex constrained optimization problems. Our theoretical analysis shows that BC-ADMM has convergence speed guarantees and practical convergence guarantees in an asymptotic sense. We further show that BC-ADMM can be used in a wide range of robotic applications, including multi-agent navigation, UAV trajectory optimization, deformable object simulation, and soft robot simulation. Our numerical experiments show that, when solving large-scale problem instances, BC-ADMM outperforms both gradient descent and Newton's method in terms of wall clock time. However, our formulation still suffers from several limitations. First, our theoretical analysis does not guarantee a convergence speed of the practical~\prettyref{alg:practical}. Instead, our convergence speed guarantee requires sufficiently large parameters that are hard to estimate. Second, our method heavily relies on a bi-convex relaxation of non-convex constraints, which potentially limits its range of applications in other robotic problems. An important class of robotic applications is the optimization under dynamic constraints, which has been investigated in other structured optimizers~\cite{stella2017simple,8814732}. Incorporating dynamic constraints in BC-ADMM poses a promising direction of future research. Finally, our practical~\prettyref{alg:practical} requires a centralized check to determine whether variables can be updated, which prevents BC-ADMM from solving very large-scale problems in a decentralized manner. We hope these limitations inspire future research to improve our techniques.

\bibliographystyle{SageH}
\bibliography{references}
\clearpage
\iflong
\section{\label{sec:convergence}Convergence Analysis}
We need a novel technique to analyze the convergence of BC-ADMM. Our main idea is to construct an Lyapunov candidate and establish its descendant property. However, such analysis requires the function $g$ to have Lipschitz continuous gradient. To work around this obstacle, we propose to use Whitney extension theorem~\cite{whitney1992analytic} and construct a function $G$ with Lipschitz-continuous gradient. We then consider BC-ADMM with $g$ replaced by $G$ and denote the algorithm as BC-ADMM$_G$ (\prettyref{sec:BC-ADMM-G}) and establish its convergence by Lyapunov analysis (\prettyref{sec:Lyapunov}). Finally, we show that the two algorithms, BC-ADMM and BC-ADMM$_G$, generate the same sequence (\prettyref{sec:connection}).

\subsection{\label{sec:BC-ADMM-G}BC-ADMM$_G$}
To construct the modified algorithm BC-ADMM$_G$, we introduce the following result to prove the existence of $G$. Specifically, for a function twice differentiable on a compact set of $\mathbb{R}^{m+l}$, we can extend it to a function that is twice differentiable in the entire $\mathbb{R}^{m+l}$. Further, we can slightly modify this function to equip it with Lipschitz continuous gradient. Let us define the set:
\begin{align*}
\mathcal{S}(r,M)&=\{\TWO{y}{z}|g(y,z)\leq M\land\|\TWO{y}{z}\|\leq r\}\\
\mathcal{S}_\mathcal{Z}(r,M)&=\{\TWO{y}{z}|g(y,z)\leq M\land\|\TWO{y}{z}\|\leq r\land z\in\mathcal{Z}\}.
\end{align*}
We aim to prove a theorem that is slightly more general theorem based on a weaker assumption on the function $g$ as follows, which allows us to solve more problem instances:
\begin{assume}
\label{ass:bivariable}
i) $f:\mathbb{R}^n\to\mathbb{R}^+\cup\{\infty\}$ is continuous and differentiable in $\dom(f)$ and $g:\mathbb{R}^{m+l}\to\mathbb{R}^+\cup\{\infty\}$ is continuous and twice continuously differentiable in $\dom(g)$. ii) $f(\bullet)+g(A\bullet,\bullet)$ is coercive. iii) A strictly feasible solution $x^0\in\mathbb{R}^n,z^0\in\mathcal{Z}$ is given, such that $f(x^0)+g(Ax^0,z^0)<\infty$. iv) $g(\bullet,z)$ is convex. vi) For $\TWO{y}{z}\in\mathcal{S}_\mathcal{Z}(r,M)$, $g(y,\bullet)+\iota_\mathcal{Z}(\bullet)$ has a unique global minimizer on the closed set $\mathcal{Z}$ denoted as a function $z(y)$, such that $\TWO{y}{z(y)}\in\mathcal{S}_\mathcal{Z}(\bar{r}_{zy}^{r,M},M)$. Further for $\TWO{y}{z(y)}\in\mathcal{S}_\mathcal{Z}(r,M)$, $z(y)$ is an $L_{zy}^{r,M}$-Lipschitz continuous function.
\end{assume}
We first present a set of preparatory results:
\begin{lemma}
\label{lem:Lg}
We take~\prettyref{ass:bivariable}. Then for any $r>0$ and $M>0$, $g$ has the Lipschitz continuous gradient on $\mathcal{S}(r,M)$ with a modulus $L_g^{r,M}<\infty$.
\end{lemma}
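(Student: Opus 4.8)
The idea is to exhibit a compact neighborhood of $\mathcal{S}(r,M)$ that stays strictly inside $\dom(g)$, bound $\nabla^2 g$ on it, and then upgrade that bound to a Lipschitz estimate for $\nabla g$ on $\mathcal{S}(r,M)$ itself by treating short and long increments separately.

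First I would collect the topology. By~\prettyref{ass:bivariable}, $g$ is continuous on $\mathbb{R}^{m+l}$ as an extended-real-valued function, so the sublevel set $\{g\le M\}$ and the set $F:=\{g=\infty\}$ are both closed; hence $\mathcal{S}(r,M)$, being the intersection of $\{g\le M\}$ with a closed ball, is compact. If it is empty the lemma is vacuous, so assume otherwise. Since $g\le M<\infty$ on $\mathcal{S}(r,M)$, we have $\mathcal{S}(r,M)\subseteq\dom(g)$ and $\mathcal{S}(r,M)\cap F=\emptyset$. As a compact set disjoint from the closed set $F$, it satisfies $\delta:=\dist(\mathcal{S}(r,M),F)>0$ (set $\delta:=1$ if $F=\emptyset$). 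The closed $\tfrac{\delta}{2}$-neighborhood $K$ of $\mathcal{S}(r,M)$ is then compact, and each of its points lies at distance $<\delta$ from $\mathcal{S}(r,M)$, hence outside $F$; therefore $K\subseteq\dom(g)$, where $g$ is twice continuously differentiable. By continuity on the compact set $K$ we obtain $L:=\sup_{p\in K}\|\nabla^2 g(p)\|<\infty$, and similarly $G:=\sup_{p\in\mathcal{S}(r,M)}\|\nabla g(p)\|<\infty$.

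Finally I would assemble the estimate. For $a,b\in\mathcal{S}(r,M)$ with $\|a-b\|\le\delta/2$, the whole segment $[a,b]$ lies within $\delta/2$ of $a$ and hence in $K$, so $\nabla g(b)-\nabla g(a)=\int_0^1\nabla^2 g\big(a+t(b-a)\big)(b-a)\,dt$ yields $\|\nabla g(a)-\nabla g(b)\|\le L\|a-b\|$; for $\|a-b\|>\delta/2$ the crude bound $\|\nabla g(a)-\nabla g(b)\|\le 2G\le(4G/\delta)\|a-b\|$ applies. Thus $\nabla g$ is Lipschitz on $\mathcal{S}(r,M)$ with $L_g^{r,M}:=\max\{L,4G/\delta\}<\infty$. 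The only subtle point is that $\mathcal{S}(r,M)$ is generally neither convex nor connected, so one cannot integrate the Hessian bound along segments joining arbitrary points of it; the short/long dichotomy is precisely what circumvents this, and the extended-real nature of $g$ is used only to ensure $\delta>0$, i.e.\ that a genuine neighborhood of $\mathcal{S}(r,M)$ remains in the region where $g$ is $C^2$.
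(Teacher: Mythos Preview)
Your proof is correct. Both your argument and the paper's rest on the compactness of $\mathcal{S}(r,M)$ and the $C^2$-smoothness of $g$ on $\dom(g)$ to obtain a uniform Hessian bound on a neighborhood of $\mathcal{S}(r,M)$. The paper obtains that neighborhood via a finite subcover of local balls and takes $L_g^{r,M}$ to be the maximum of the corresponding local Hessian bounds; you instead use the positive distance $\delta$ from $\mathcal{S}(r,M)$ to the closed set $\{g=\infty\}$ to produce a single uniform $\delta/2$-neighborhood $K$. The substantive difference is that the paper stops at the Hessian bound and tacitly identifies it with a Lipschitz modulus for $\nabla g$, whereas you explicitly confront the fact that $\mathcal{S}(r,M)$ need not be convex (so segments joining two of its points may leave any fixed neighborhood) and resolve this via the short/long dichotomy, invoking the auxiliary sup-bound $G$ on $\|\nabla g\|$ for the long case. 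Your version is therefore slightly more complete on this point; the paper's later uses of $L_g^{r,M}$ (as a bound on $\|\nabla^2 g\|$ in~\prettyref{lem:zy-case1}, and in comparing with $L_G^{r,M}$ in~\prettyref{lem:extension}) are compatible with either reading.
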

\begin{proof}
For each $\TWO{y}{z}\in\mathcal{S}(r,M)$, we can find an open neighborhood $\mathcal{G}_{y,z}(\rad(y,z))$ with radius $\rad(y,z)$ such that $\|\nabla^2g(y',z')\|\leq L(y,z)<\infty$ for $\TWO{y'}{z'}\in\mathcal{G}_{y,z}(\rad(y,z))$. Now since $\mathcal{S}(r,M)$ is compact, we can find a finite sub-cover and we choose $L_g^{r,M}$ to be the maximum over these finite modulus $L(y,z)$.
\end{proof}
\begin{lemma}
\label{lem:margin}
We take~\prettyref{ass:bivariable}. Then for any $r>0$ and $N>M>0$, there exists some $\delta^{r,M,N}>0$, such that for any $\TWO{y}{z}\in\mathcal{S}(r,M)$, $\mathcal{G}_{y,z}(\delta^{r,M,N})\subseteq\mathcal{S}(r+\delta^{r,M,N},N)$.
\end{lemma}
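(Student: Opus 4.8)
The plan is a routine finite-subcover argument, with the one subtlety being the extended-real nature of $g$. First I would dispose of the ball-radius constraint in the definition of $\mathcal{S}$: for \emph{any} $\delta>0$, if $\|\TWO{y}{z}\|\le r$ and $\TWO{y'}{z'}\in\mathcal{G}_{y,z}(\delta)$, then $\|\TWO{y'}{z'}\|\le\|\TWO{y}{z}\|+\|\TWO{y'}{z'}-\TWO{y}{z}\|<r+\delta$ by the triangle inequality, so this part of the target set $\mathcal{S}(r+\delta,N)$ holds no matter which $\delta$ we eventually choose. (If $\mathcal{S}(r,M)=\emptyset$ the claim is vacuous, so assume it is nonempty.) Thus everything reduces to forcing $g(y',z')\le N$ on a uniform ball around every point of $\mathcal{S}(r,M)$.

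Next I would note that $\mathcal{S}(r,M)$ is compact: it is bounded, lying inside the closed ball of radius $r$, and closed, being the intersection of that ball with the sublevel set $\{g\le M\}$, which is closed since $g$ is continuous as a map into $\mathbb{R}^+\cup\{\infty\}$ by~\prettyref{ass:bivariable} i). For every $\TWO{y}{z}\in\mathcal{S}(r,M)$ we have $g(y,z)\le M<N$, so in particular $g(y,z)$ is finite and the ordinary $\varepsilon$--$\delta$ definition of continuity applies at $\TWO{y}{z}$; taking $\varepsilon=N-M$ yields a radius $\rad(y,z)>0$ with $g(y',z')<g(y,z)+(N-M)\le N$ for all $\TWO{y'}{z'}\in\mathcal{G}_{y,z}(\rad(y,z))$. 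The open balls $\mathcal{G}_{y,z}\!\left(\tfrac12\rad(y,z)\right)$, taken over $\TWO{y}{z}\in\mathcal{S}(r,M)$, cover the compact set $\mathcal{S}(r,M)$, so a finite subcover exists, indexed by points $\TWO{y_i}{z_i}$ with radii $\rad_i=\rad(y_i,z_i)$, $i=1,\dots,k$.

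Finally I would set $\delta^{r,M,N}=\min_{1\le i\le k}\tfrac12\rad_i>0$ and check the inclusion. Given $\TWO{y}{z}\in\mathcal{S}(r,M)$, pick $i$ with $\TWO{y}{z}\in\mathcal{G}_{y_i,z_i}\!\left(\tfrac12\rad_i\right)$; then for any $\TWO{y'}{z'}\in\mathcal{G}_{y,z}(\delta^{r,M,N})$, the triangle inequality gives $\|\TWO{y'}{z'}-\TWO{y_i}{z_i}\|<\tfrac12\rad_i+\tfrac12\rad_i=\rad_i$, hence $g(y',z')<N$; combined with the norm bound from the first step, $\TWO{y'}{z'}\in\mathcal{S}(r+\delta^{r,M,N},N)$, which is the claim.

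The one place that genuinely needs care --- and the closest thing to a ``hard part'' --- is invoking continuity of the \emph{extended}-real-valued $g$: this is legitimate precisely because the value is finite at every point of $\mathcal{S}(r,M)$, and it is exactly why the strict gap $N>M$ is hypothesized. If one allowed $N=M$ the argument would break, since $g$ may attain the level $M$ along a set with empty interior, leaving no room to absorb the perturbation; the parameter $N$ is introduced precisely to create that room. Everything else is bookkeeping, and this lemma will then feed into the construction of the Whitney extension $G$ by guaranteeing that $g$ remains finite (hence twice differentiable) on a thickened neighborhood of the sublevel set where the BC-ADMM$_G$ iterates will be shown to live.
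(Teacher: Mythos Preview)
Your argument is correct. You run a standard half-radius finite-subcover (Lebesgue-number) argument: use continuity of $g$ at each point of the compact set $\mathcal{S}(r,M)$ to get local balls on which $g<N$, cover by half-balls, take the minimum half-radius as $\delta^{r,M,N}$, and recombine via the triangle inequality. The handling of the extended-real nature of $g$ is fine, since $g$ is finite on $\mathcal{S}(r,M)$.

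The paper takes a different, slightly slicker route: it introduces the auxiliary compact set $\mathcal{S}'(r+1,N)=\{g\ge N,\ \|\TWO{y}{z}\|\le r+1\}$, observes that $\mathcal{S}(r,M)$ and $\mathcal{S}'(r+1,N)$ are disjoint compact sets (continuity of $g$ making both closed), so the Euclidean distance between them is strictly positive, say $\delta^{r,M,N,\star}$, and then sets $\delta^{r,M,N}=\min(\delta^{r,M,N,\star},1)$. The cap by $1$ is what keeps the perturbed point inside the ball of radius $r+1$, so that failing to lie in $\mathcal{S}'(r+1,N)$ forces $g<N$. Compared with your approach, this avoids the explicit cover/subcover bookkeeping and the half-radius trick, trading them for a single ``positive distance between disjoint compacta'' fact; your approach is more elementary in that it never needs to name the second compact set, and it makes the role of the strict gap $N>M$ just as transparent.
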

\begin{proof}
Consider another set $\mathcal{S}'(r+1,N)=\{g(y,z)\geq N\land\|\TWO{y}{z}\|\leq r+1\}$. Clearly, $\mathcal{S}(r,M)$ and $\mathcal{S}'(r+1,N)$ are both compact and disjoint because $g$ is continuous, so the distance between them is a positive value denoted as $\delta^{r,M,N,\star}>1$. We can show that $\delta^{r,M,N}=\min(\delta^{r,M,N,\star},1)$ satisfies our needs.
\end{proof}
\begin{lemma}
\label{lem:zy}
We take~\prettyref{ass:bivariable}. Suppose $\TWO{y}{z}\in\mathcal{S}_\mathcal{Z}(r,M)$ and $\|y'-y\|\leq\delta^{r,M,N}$, then $\|z(y)-z(y')\|\leq L_{zy}^{r,M,N}\|y'-y\|$ for some $L_{zy}^{r,M,N}<\infty$.
\end{lemma}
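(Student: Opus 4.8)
The plan is to chain three facts already established — the margin estimate of \prettyref{lem:margin}, the uniqueness/containment clause of \prettyref{ass:bivariable} vi), and its uniform Lipschitz clause — so that the two minimizers $z(y)$ and $z(y')$ are forced onto a single fixed compact sublevel set, on which vi) directly hands us one Lipschitz modulus.

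First I would note that $\TWO{y}{z}\in\mathcal{S}_\mathcal{Z}(r,M)$ in particular gives $\TWO{y}{z}\in\mathcal{S}(r,M)$, and apply \prettyref{lem:margin} with the same triple $r,M,N$: since $\|\TWO{y'}{z}-\TWO{y}{z}\|=\|y'-y\|\le\delta^{r,M,N}$, the point $\TWO{y'}{z}$ lies in $\mathcal{S}(r+\delta^{r,M,N},N)$, and because the $z$-block is unchanged and $z\in\mathcal{Z}$, in fact $\TWO{y'}{z}\in\mathcal{S}_\mathcal{Z}(r+\delta^{r,M,N},N)$. The original point satisfies $\TWO{y}{z}\in\mathcal{S}_\mathcal{Z}(r,M)\subseteq\mathcal{S}_\mathcal{Z}(r+\delta^{r,M,N},N)$ as well, using $\delta^{r,M,N}>0$ and $N>M$. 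So both anchor points $\TWO{y}{z}$ and $\TWO{y'}{z}$ lie in the common set $\mathcal{S}_\mathcal{Z}(r+\delta^{r,M,N},N)$. Next I would invoke \prettyref{ass:bivariable} vi) at each of these two anchor points: it yields that $z(y)$ and $z(y')$ are well-defined unique minimizers of $g(y,\bullet)+\iota_\mathcal{Z}$ and $g(y',\bullet)+\iota_\mathcal{Z}$, and, writing $\rho:=\bar{r}_{zy}^{\,r+\delta^{r,M,N},N}$, that both $\TWO{y}{z(y)}$ and $\TWO{y'}{z(y')}$ belong to $\mathcal{S}_\mathcal{Z}(\rho,N)$. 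The point that makes vi) applicable is that $z\in\mathcal{Z}$ forces $g(y,z(y))\le g(y,z)\le N$ and $g(y',z(y'))\le g(y',z)\le N$, which is exactly what places the two minimizer points inside the level-$N$ sublevel set. Finally, the Lipschitz clause of vi), applied on the single fixed set $\mathcal{S}_\mathcal{Z}(\rho,N)$ that now contains both $\TWO{y}{z(y)}$ and $\TWO{y'}{z(y')}$, gives $\|z(y)-z(y')\|\le L_{zy}^{\rho,N}\|y-y'\|$, so the claim holds with $L_{zy}^{r,M,N}:=L_{zy}^{\,\bar{r}_{zy}^{\,r+\delta^{r,M,N},N},\,N}<\infty$.

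The main obstacle is purely the bookkeeping of the nested radii $r\mapsto r+\delta^{r,M,N}\mapsto\bar{r}_{zy}^{\,r+\delta^{r,M,N},N}$ and the matching levels $M\mapsto N\mapsto N$: one must check that vi) is invoked only at radius/level pairs on which its sub-claims are guaranteed, and — the real crux — that the single enlarged set $\mathcal{S}_\mathcal{Z}(\rho,N)$ captures $\TWO{y}{z(y)}$ and $\TWO{y'}{z(y')}$ simultaneously, so that one and the same modulus $L_{zy}^{\rho,N}$ covers the pair rather than two incompatible moduli at radius $\bar{r}_{zy}^{\,r,M}$ and some other radius. The only routine loose end is the strict-versus-nonstrict inequality in the ball radius used by \prettyref{lem:margin}, which I would dispatch by recalling that its $\delta^{r,M,N}\le 1$ is strictly below the inter-set distance constructed in that lemma's proof; everything else is a direct concatenation of \prettyref{lem:margin} and \prettyref{ass:bivariable} vi).
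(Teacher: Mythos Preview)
Your proposal is correct and follows the same approach as the paper: apply \prettyref{lem:margin} to place $(y',z)$ in $\mathcal{S}_\mathcal{Z}(r+\delta^{r,M,N},N)$, then invoke both the containment and the Lipschitz clauses of \prettyref{ass:bivariable}~vi) on the enlarged set with radius $\rho=\bar{r}_{zy}^{\,r+\delta^{r,M,N},N}$. The only difference is that the paper runs the containment argument for every $y''$ on the segment $[y,y']$, not just the two endpoints---a harmless strengthening that guards against a purely local reading of the Lipschitz clause in vi), and costs nothing extra since $\|y''-y\|\le\|y'-y\|\le\delta^{r,M,N}$ along the whole segment.
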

\begin{proof}
For any point $y''$ on the line segment between $y$ and $y'$, we have $\TWO{y''}{z}\in\mathcal{S}_\mathcal{Z}(r+\delta^{r,M,N},N)$ by~\prettyref{lem:margin}, so by our~\prettyref{ass:bivariable} vi), $\TWO{y''}{z(y'')}\in\mathcal{S}_\mathcal{Z}(\bar{\bar{r}}_{zy}^{r,M,N},M)$ where we define $\bar{\bar{r}}_{zy}^{r,M,N}=\bar{r}_{zy}^{r+\delta^{r,M,N},N}$. Further by our~\prettyref{ass:bivariable} vi), $z(y'')$ restricted to this set is Lipschitz continuous with a modulus of $L_{zy}^{\bar{\bar{r}}_{zy}^{r,M,N},M}$ re-defined as $L_{zy}^{r,M,N}$, which is our desired result.
\end{proof}
\begin{lemma}
\label{lem:extension}
We take~\prettyref{ass:bivariable}. Then for any $r,M>0$ such that $\mathcal{S}(r,M)$ is not empty, we can define a twice-differentiable real function $G_{r,M}:\mathbb{R}^{m+l}\to\mathbb{R}$ such that: i) $G_{r,M}$ and $g$ match up to second derivatives on $\mathcal{S}(r,M)$; ii) $G_{r,M}=0$ when $\|\TWO{y}{z}\|\geq 2r$; iii) $G_{r,M}$ has $L_G^{r,M}$-Lipschitz continuous gradient with $L_g^{r,M}\leq L_G^{r,M}<\infty$; iv) $G_{r,M}\geq\underline{G}_r>-\infty$.
\end{lemma}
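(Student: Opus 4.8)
The plan is to localize $g$ by multiplying it with a smooth cutoff, which can equally be seen as an elementary instance of the Whitney $C^2$-extension theorem. \emph{Step 1 (a neighborhood on which $g$ is $C^2$).} Note first that $\mathcal{S}(r,M)$ is compact: it is the intersection of the closed sublevel set $\{g\le M\}$ (closed by continuity of $g$) with the closed ball of radius $r$, and $g\le M<\infty$ there, so $\mathcal{S}(r,M)\subseteq\dom(g)$. Fixing $N:=M+1$ and $\delta:=\min\{\delta^{r,M,N},r\}$ supplied by \prettyref{lem:margin}, the open $\delta$-neighborhood $U$ of $\mathcal{S}(r,M)$ satisfies $U\subseteq\mathcal{S}(r+\delta,N)\subseteq\{g\le N\}\subseteq\dom(g)$; being open and contained in $\dom(g)$, $U$ lies in the interior of $\dom(g)$, so $g$ is genuinely twice continuously differentiable on $U$. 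Since $\delta\le r$ we also have $U\subseteq\{\|(y,z)\|<2r\}$.

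\emph{Step 2 (cutoff and construction of $G_{r,M}$).} Fix a $C^\infty$ cutoff $\psi:\mathbb{R}^{m+l}\to[0,1]$ with $\psi\equiv1$ on a smaller open neighborhood $U'$ of $\mathcal{S}(r,M)$ and with $\operatorname{supp}\psi$ a compact subset of $U$; such a $\psi$ is obtained by mollifying the indicator of a set nested between $U'$ and $U$. Define $G_{r,M}:=\psi\,g$ on $\operatorname{supp}\psi$ and $G_{r,M}:=0$ elsewhere. Because $\operatorname{supp}\psi\subseteq U$, where $\psi$ and $g$ are both $C^2$ and $g$ is finite, while $G_{r,M}\equiv0$ on the open set $\mathbb{R}^{m+l}\setminus\operatorname{supp}\psi$, gluing over the open cover $U\cup(\mathbb{R}^{m+l}\setminus\operatorname{supp}\psi)$ shows $G_{r,M}$ is a real-valued, globally twice continuously differentiable function. (Equivalently, one extends the $2$-jet $(g,\nabla g,\nabla^2 g)$ restricted to $\mathcal{S}(r,M)$ by the Whitney extension theorem and multiplies the result by $\psi$.)

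\emph{Step 3 (verifying the four properties).} On $U'$ we have $G_{r,M}\equiv g$, hence $G_{r,M}$ agrees with $g$ together with its first and second derivatives throughout $\mathcal{S}(r,M)\subseteq U'$, which is (i). Off $\operatorname{supp}\psi$, and in particular wherever $\|(y,z)\|\ge 2r$ since $\operatorname{supp}\psi\subseteq U\subseteq\{\|(y,z)\|<2r\}$, $G_{r,M}$ vanishes, which is (ii). As a $C^2$ function with compact support, $G_{r,M}$ has $\nabla G_{r,M}\in C^1$ with compact support, hence globally Lipschitz with modulus $\sup_{\mathbb{R}^{m+l}}\|\nabla^2 G_{r,M}\|<\infty$; putting $L_G^{r,M}:=\max\{L_g^{r,M},\ \sup_{\mathbb{R}^{m+l}}\|\nabla^2 G_{r,M}\|\}$ with $L_g^{r,M}$ from \prettyref{lem:Lg} gives $L_g^{r,M}\le L_G^{r,M}<\infty$ and a valid gradient-Lipschitz constant, which is (iii). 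Finally $g\ge0$ and $\psi\ge0$ give $G_{r,M}\ge0$, so $\underline{G}_r:=0$ serves for (iv) (more weakly, any continuous compactly supported function is bounded below).

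\emph{Expected main obstacle.} The argument is routine once \prettyref{lem:Lg} and \prettyref{lem:margin} are in hand; the one genuinely delicate point is Step 1. For our problem class $\dom(g)$ need not be open (e.g.\ the indicator-augmented ARAP potential of \prettyref{fig:ARAP}), so one cannot simply assert that $g$ is $C^2$ on a neighborhood of $\mathcal{S}(r,M)$; it is exactly the interior margin $\delta$ of \prettyref{lem:margin} that certifies such a neighborhood still lies in the region of twice continuous differentiability. The only other care-point, minor by comparison, is keeping the cutoff's collar thin ($\delta\le r$) so that $\operatorname{supp}\psi$ stays inside the ball of radius $2r$ while $\psi$ remains identically $1$ on a full neighborhood of $\mathcal{S}(r,M)$ --- possible because $\mathcal{S}(r,M)$ sits inside the closed ball of radius $r$, leaving a collar of width $r$ in which to interpolate.
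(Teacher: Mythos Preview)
Your argument is correct and in fact more elementary than the paper's. The paper invokes the Whitney $C^2$-extension theorem on the compact set $\mathcal{S}(r,M)$ to obtain a global $C^2$ function $G_{r,M}^\star$, and only afterwards multiplies by an explicit radial cutoff $\eta_r(\|(y,z)\|)$ that is identically one on the ball of radius $r$ and vanishes beyond radius $2r$. You instead use \prettyref{lem:margin} to exhibit an open neighborhood $U$ of $\mathcal{S}(r,M)$, contained in both $\dom(g)$ and the ball of radius $2r$, on which $g$ itself is already $C^2$, and then cut off $g$ directly by a bump function supported in $U$.

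What your route buys: it sidesteps the Whitney extension theorem entirely---a nontrivial black box---and, because $g\ge 0$ and $\psi\ge 0$, it yields the sharper lower bound $\underline{G}_r=0$, whereas the paper's $G_{r,M}^\star$ may well be negative off $\mathcal{S}(r,M)$, forcing only $\underline{G}_r>-\infty$. What the paper's route buys is that it does not need to appeal to \prettyref{lem:margin}; the cutoff is purely radial and explicit. Since \prettyref{lem:margin} is already available in the development, your approach is the cleaner of the two here. Your identification of the delicate point---that $\dom(g)$ need not be open, so one must work to secure an open neighborhood of $\mathcal{S}(r,M)$ on which $g$ is genuinely $C^2$---is exactly right and handled correctly.
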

\begin{proof}
Clearly $\mathcal{S}(r,M)$ is compact, so the Whitney extension theorem results in a twice-differentiable function $G_{r,M}^\star$ such that $G_{r,M}^\star$ and $g$ match up to second derivatives on $\mathcal{S}(r,M)$. We then define a smooth decay function:
\begin{align*}
\eta_r(d)=
\begin{cases}
1&d\leq r\\
{(2r-x)^3(6x^2-9rx+4r^2)}/{r^5}&d\in(r,2r)\\
0&d\geq 2r\\
\end{cases}.
\end{align*}
Finally, we can satisfy all the conditions by defining: $G_{r,M}(y,z)=\eta_r(\|y,z\|)G_{r,M}^\star(y,z)$. Indeed, i) and ii) are trivial. To establish iii), note that $G_{r,M}$ has $L_g^{r,M}$-Lipschitz continuous gradient on $\mathcal{S}(r,M)$ with $L_g^{r,M}<\infty$ by~\prettyref{lem:Lg}, $G_{r,M}$ has $0$-Lipschitz continuous gradient when $\|\TWO{y}{z}\|\geq2r$. The remaining area is bounded. Similarly, to establish iv), note that $G_{r,M}=g\geq0$ on $\mathcal{S}(r,M)$, $G_{r,M}=0$ when $\|\TWO{y}{z}\|\geq2r$, and the remaining area is bounded.
\end{proof}
The main benefit of analyzing the extended function $G_{r,M}$ instead of $g$ lies in the fact that $G_{r,M}$ has $L_G^{r,M}$-Lipschitz continuous gradient, so that we have the following convenient result in the entire Euclidean space:
\begin{equation}
\begin{aligned}
\label{eq:Lipschitz-a}
&\|\nabla G_{r,M}(y,z)-\nabla G_{r,M}(y',z')\|\\
\leq&L_G^{r,M}\|\TWO{y}{z}-\TWO{y'}{z'}\|,
\end{aligned}
\end{equation}
and:
\begin{equation}
\begin{aligned}
\label{eq:Lipschitz-b}
&G_{r,M}(y',z')\\
\leq&G_{r,M}(y,z)+\nabla G_{r,M}(y,z)^T(\TWO{y'}{z'}-\TWO{y}{z})+\\
&\frac{L_G^{r,M}}{2}\|\TWO{y}{z}-\TWO{y'}{z'}\|^2.
\end{aligned}
\end{equation}
We are now ready to define our modified BC-ADMM$_G$ algorithm that aims to minimize the objective with $g$ replaced by $G_{r,M}$. First, since $f(\bullet)+g(A\bullet,\bullet)$ is coercive by~\prettyref{ass:bivariable}, we must have that all the $\TWO{x}{z}$ satisfying $f(x)+g(Ax,z)\leq M$ is bounded in the hyperball with radius $R^M$, i.e., $\|\TWO{x}{z}\|\leq R^M$. With this notation, we assume the following parameter choice:
\begin{assume}
\label{ass:param}
The following parameters are used for BC-ADMM$_G$:
\footnotesize
\begin{subequations}
\begin{align}[left={\empheqlbrace}]
\label{eq:param-a}
&M=5(f(x^0)+g(Ax^0))\\
\begin{split}
&r^1=R^{2M/5}, r^2=r^1+\delta^{r^1,2M/5,3M/5}\\
&r^3=r^2+\delta^{r^2,3M/5,4M/5}, r^4=r^3+L_{zy}^{r^2,3M/5,4M/5}\delta\\
\end{split}\\
\label{eq:param-a2}
&\delta=\min(\delta^{r^1,2M/5,3M/5},\delta^{r^2,3M/5,4M/5},\delta^{r^4,4M/5,M})\\
\label{eq:param-a3}
&r=r^4+\delta^{r^4,4M/5,M}\\
\label{eq:param-b}
&\beta_x\geq\frac{2\|A\|\|\nabla f(x^0)+A^T\nabla_yg(Ax^0,z^0)\|}{\delta}+L_f\\
\label{eq:param-b2}
&\beta_x\geq\frac{4(2M/5-\underline{G}_{r,M})\|A\|^2}{\delta^2}\\
\label{eq:param-b3}
&\beta_y\geq\frac{4(2M/5-\underline{G}_{r,M})}{\delta^2}\\
\label{eq:param-c}
&\beta\geq2L_G^{r,M}\\
\label{eq:param-c2}
&\beta\geq\frac{32\beta_y^2+16\beta_y^2(L_G^{r,M}L_{zy}^{r^2,3M/5,4M/5})^2+16(L_G^{r,M})^2}{\beta_y}\\
\label{eq:param-c3}
&\beta\geq\frac{2(\beta_y-L_G^{r,M})}{\beta_y+L_G^{r,M}-2}\\
\label{eq:param-c4}
&\beta\geq\frac{\delta^2}{f(x^0)+g(Ax^0,z^0)}\\
\label{eq:param-c5}
\begin{split}
&\beta\left[1-\frac{(\beta_y+L_G^{r,M}L_{zy}^{r^2,3M/5,4M/5})^2}{8\beta_y^2(1+(L_G^{r,M}L_{zy}^{r^2,3M/5,4M/5})^2)}\right]\geq\\
&L_G^{r,M}+\frac{2(2M/5-\underline{G}_{r,M})}{\delta^2}
\end{split}
\end{align}
\end{subequations}
\normalsize
\end{assume}
It is easy to see that the parameters in~\prettyref{ass:param} can be satisfied by choosing $M,\delta,r,\beta_x,\beta_y,\beta$ in order to satisfy all the conditions. Next, we introduce the following Lagrangian function:
\small
\begin{equation}
\begin{aligned}
&\mathcal{L}_{r,M}(x,y,z,\lambda)\\
=&f(x)+G_{r,M}(y,z)+\frac{\beta}{2}\|Ax-y\|^2+\lambda^T(Ax-y).
\end{aligned}
\end{equation}
\normalsize
Our BC-ADMM$_G$ proceeds to update $x,y,z,\lambda$ iteratively by the following rule, with superscript denoting the iteration number:
\footnotesize
\begin{subequations}
\begin{align}[left={\rotatebox[origin=c]{90}{$\text{BC-ADMM}_G$} \empheqlbrace}]
y^0=&Ax^0\quad
z^0=z(y^0)\quad
\lambda^0=\nabla_yg(y^0,z^0)\\
\label{eq:step-a-G}
x^{k+1}=&\argmin{x}\mathcal{L}_{r,M}(x,y^k,z^k,\lambda^k)+\frac{\beta_x}{2}\|x-x^k\|^2\\
\label{eq:step-b-G}
y^{k+1}=&\argmin{y}\mathcal{L}_{r,M}(x^{k+1},y,z^k,\lambda^k)+\frac{\beta_y}{2}\|y-y^k\|^2\\
\label{eq:step-c-G}
z^{k+1}=&\CLIP(z(y^{k+1}),z^k,L_{zy}^{r^2,3M/5,4M/5}\|y^{k+1}-y^k\|)\\
\label{eq:step-d-G}
\lambda^{k+1}=&\lambda^k+\beta(Ax^{k+1}-y^{k+1}).
\end{align}
\end{subequations}
\normalsize
We make two changes in BC-ADMM$_G$. First, we cannot compute the globally optimal solution in~\prettyref{eq:step-b-G} because $G_{r,M}(\bullet,z)$ is not convex in general, so we only assume that a locally optimal solution is computed starting from the initial guess of $y^k$. Second, note that we cannot reuse equation~\prettyref{eq:step-c} with $g$ replaced by $G_{r,M}$ because $G_{r,M}(y,\bullet)$ is no longer convex in general. Therefore, we introduce a clip function defined as:
\begin{align*}
&\CLIP(z(y^{k+1}),z^k,l)=\\
&\begin{cases}
z^k&\text{otherwise}\\
z(y^{k+1})&\text{if }\begin{cases}
\|z(y^{k+1})-z^k\|\leq l \text{ and}\\
G_{r,M}(y^{k+1},z(y^{k+1}))\leq
G_{r,M}(y^{k+1},z^k)
\end{cases}.
\end{cases}
\end{align*}
Note that, unlike BC-ADMM, we cannot use mathematical induction to show that $g(y^{k+1},\bullet)$ is proper in BC-ADMM$_G$, so we have to consider the case where $z(y^{k+1})$ does not exist and our $\CLIP$ function simply set $z^{k+1}=z^k$ in this case. Finally, similar to BC-ADMM, we assume that~\prettyref{eq:step-a-G} and~\prettyref{eq:step-b-G} is solved by local gradient descend starting from the initial guess of $x^k$. By the first-order optimality condition and the definition of $\CLIP$, we have at every iteration:

\tiny
\begin{align}
\label{eq:optimality-a-G}
0=&\beta_x(x^{k+1}-x^k)+\nabla f(x^{k+1})+\beta A^T(Ax^{k+1}-y^k)+A^T\lambda^k\\
\label{eq:optimality-a2-G}
=&\beta_x(x^{k+1}-x^k)+\nabla f(x^{k+1})+A^T\lambda^{k+1}+\beta A^T(y^{k+1}-y^k)\\
0
\label{eq:optimality-b-G}
=&\beta_y(y^{k+1}-y^k)+\nabla_yG_{r,M}(y^{k+1},z^k)-\lambda^{k+1}\\
\label{eq:CLIP}
l\geq&\|\CLIP(a,b,l)-b\|.
\end{align}
\normalsize

In the next section, we would construct Lyapunov candidate for BC-ADMM$_G$ and establish its descendent property.
\subsection{\label{sec:Lyapunov}Lyapunov Analysis of FT-ADMM$_G$}
In this section, we analyze the convergence by constructing a Lyapunov candidate in a similar fashion as~\cite{jiang2019structured}. We start from a convenient lemma:
\begin{lemma}
\label{lem:lambda}
Taking~\prettyref{ass:bivariable}, the update of $\lambda^k$ in FT-ADMM$_G$ satisfies the following inequality:
\begin{equation}
\begin{aligned}
&\|\lambda^{k+1}-\lambda^k\|^2\leq
4(\beta_y^2+(L_G^{r,M})^2)\|y^{k+1}-y^k\|^2+\\
&4\beta_y^2(1+(L_G^{r,M}L_{zy}^{r^2,3M/5,4M/5})^2)\|y^k-y^{k-1}\|^2.
\end{aligned}
\end{equation}
\end{lemma}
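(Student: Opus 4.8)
The plan is to read off an explicit formula for the multiplier from the first-order optimality condition of the $y$-subproblem, and then control the resulting increments. Since \prettyref{eq:step-b-G} is driven to a stationary point, \prettyref{eq:optimality-b-G} gives $\lambda^{k+1}=\beta_y(y^{k+1}-y^k)+\nabla_yG_{r,M}(y^{k+1},z^k)$; replacing $k$ by $k-1$ yields $\lambda^{k}=\beta_y(y^{k}-y^{k-1})+\nabla_yG_{r,M}(y^{k},z^{k-1})$. Subtracting,
\[
\begin{aligned}
\lambda^{k+1}-\lambda^k={}&\beta_y(y^{k+1}-y^k)-\beta_y(y^{k}-y^{k-1})\\
&+\nabla_yG_{r,M}(y^{k+1},z^k)-\nabla_yG_{r,M}(y^{k},z^{k-1}),
\end{aligned}
\]
so the lemma reduces to bounding the gradient-difference term by the two consecutive $y$-increments.

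Next I would split the gradient difference through the intermediate point $(y^k,z^k)$, writing it as $[\nabla_yG_{r,M}(y^{k+1},z^k)-\nabla_yG_{r,M}(y^k,z^k)]+[\nabla_yG_{r,M}(y^k,z^k)-\nabla_yG_{r,M}(y^k,z^{k-1})]$. The $L_G^{r,M}$-Lipschitz continuity of $\nabla G_{r,M}$ from \prettyref{lem:extension}(iii), stated globally as \prettyref{eq:Lipschitz-a}, bounds the first bracket by $L_G^{r,M}\|y^{k+1}-y^k\|$ and the second by $L_G^{r,M}\|z^k-z^{k-1}\|$. It then remains to relate $\|z^k-z^{k-1}\|$ to a $y$-increment, which is exactly what the clip step provides: by \prettyref{eq:step-c-G}, $z^k=\CLIP(z(y^k),z^{k-1},L_{zy}^{r^2,3M/5,4M/5}\|y^k-y^{k-1}\|)$, and the defining property \prettyref{eq:CLIP} gives $\|z^k-z^{k-1}\|\le L_{zy}^{r^2,3M/5,4M/5}\|y^k-y^{k-1}\|$. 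Note this estimate also covers the fallback branch of $\CLIP$, where $z^k=z^{k-1}$ makes the left-hand side zero.

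Finally I would regroup $\lambda^{k+1}-\lambda^k$ into a ``current-increment'' block $\beta_y(y^{k+1}-y^k)+[\nabla_yG_{r,M}(y^{k+1},z^k)-\nabla_yG_{r,M}(y^k,z^k)]$, of norm at most $(\beta_y+L_G^{r,M})\|y^{k+1}-y^k\|$ by the triangle inequality, and a ``previous-increment'' block consisting of the remaining two terms, of norm at most $(\beta_y+L_G^{r,M}L_{zy}^{r^2,3M/5,4M/5})\|y^k-y^{k-1}\|$ after inserting the $z$-bound. Applying $\|a+b\|^2\le 2\|a\|^2+2\|b\|^2$ to the two blocks and then $(\alpha+\gamma)^2\le 2\alpha^2+2\gamma^2$ inside each squared norm produces the coefficients $4(\beta_y^2+(L_G^{r,M})^2)$ and $4(\beta_y^2+(L_G^{r,M}L_{zy}^{r^2,3M/5,4M/5})^2)$; the latter is absorbed into $4\beta_y^2(1+(L_G^{r,M}L_{zy}^{r^2,3M/5,4M/5})^2)$ using $\beta_y\ge1$, which is harmless to assume and in fact follows from the lower bounds on $\beta_y$ in \prettyref{ass:param}.

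I do not expect a genuine obstacle here, as this is a bookkeeping step feeding the later Lyapunov argument; the only points requiring care are (i) checking that both branches of $\CLIP$ are handled uniformly by \prettyref{eq:CLIP}, and (ii) tracking the constants so the final inequality matches the stated form exactly rather than merely up to an absolute factor. A slightly shorter route is to bound the gradient difference in one shot by $L_G^{r,M}\bigl(\|y^{k+1}-y^k\|^2+(L_{zy}^{r^2,3M/5,4M/5})^2\|y^k-y^{k-1}\|^2\bigr)^{1/2}$ via \prettyref{eq:Lipschitz-a} applied directly to the pair $(y,z)$, which yields the same conclusion after the same elementary inequalities.
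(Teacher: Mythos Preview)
The proposal is correct and takes essentially the same approach as the paper: both extract $\lambda^{k+1}$ from the $y$-optimality condition \eqref{eq:optimality-b-G}, split the gradient difference through the intermediate point $(y^k,z^k)$, apply the global Lipschitz bound from \prettyref{lem:extension}(iii) together with the clip estimate \eqref{eq:CLIP}, and then collect constants via elementary squared-sum inequalities (the paper applies the four-term bound directly, you group into two blocks first---a cosmetic difference). Your observation that passing from $4(\beta_y^2+(L_G^{r,M}L_{zy}^{r^2,3M/5,4M/5})^2)$ to the stated coefficient $4\beta_y^2(1+(L_G^{r,M}L_{zy}^{r^2,3M/5,4M/5})^2)$ requires $\beta_y\ge1$ is accurate, and the paper's own chain tacitly uses the same step.
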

\begin{proof}
This result is derived by the following inequality:
\begin{align*}
&\|\lambda^{k+1}-\lambda^k\|^2\\
\numleq{1}&4\|\beta_y(y^{k+1}-y^k)\|^2+
4\|\beta_y(y^k-y^{k-1})\|^2+\\
&4\|\nabla_yG_{r,M}(y^{k+1},z^k)-\nabla_yG_{r,M}(y^k,z^k)\|^2+\\
&4\|\nabla_yG_{r,M}(y^k,z^k)-\nabla_yG_{r,M}(y^k,z^{k-1})\|^2\\
\numleq{2}&4(\beta_y^2+(L_G^{r,M})^2)\|y^{k+1}-y^k\|^2+4\beta_y^2\|y^k-y^{k-1}\|^2+\\
&4\beta_y^2(L_G^{r,M})^2\|z^k-z^{k-1}\|^2\\
\numleq{3}&4(\beta_y^2+(L_G^{r,M})^2)\|y^{k+1}-y^k\|^2+4\beta_y^2\|y^k-y^{k-1}\|^2+\\
&4\beta_y^2(L_G^{r,M}L_{zy}^{r^2,3M/5,4M/5})^2\|y^k-y^{k-1}\|^2.
\end{align*}
Here (1) is derived using~\prettyref{eq:optimality-b-G}, (2) is due to~\prettyref{lem:extension}, and (3) is due to~\prettyref{lem:zy} and~\prettyref{eq:CLIP}.
\end{proof}
Next, we propose the following Lyapunov function:
\begin{align*}
&\Psi_{r,M}(x^k,y^k,z^k,\lambda^k,y^{k-1})=\mathcal{L}_{r,M}(x^k,y^k,z^k,\lambda^k)+\\
&\frac{4\beta_y^2(1+(L_G^{r,M}L_{zy}^{r^2,3M/5,4M/5})^2)}{\beta}\|y^k-y^{k-1}\|^2,
\end{align*}
and establish its monotonically decreasing property:
\begin{lemma}
\label{lem:Lyapunov}
Taking~\prettyref{ass:bivariable} and~\prettyref{eq:param-c2}, then we have for FT-ADMM$_G$:
\begin{equation}
\begin{aligned}
\label{eq:delta}
&\frac{\beta_x}{4}\|x^{k+1}-x^k\|^2+\frac{\beta_y}{4}\|y^{k+1}-y^k\|^2\\
\leq&\Psi_{r,M}(x^k,y^k,z^k,\lambda^k,y^{k-1})-\\
&\Psi_{r,M}(x^{k+1},y^{k+1},z^{k+1},\lambda^{k+1},y^k).
\end{aligned}
\end{equation}
\end{lemma}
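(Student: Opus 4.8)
\ The plan is to follow the standard ``sum of four one-step differences'' scheme for nonconvex ADMM. Write $\mathcal{L}^k=\mathcal{L}_{r,M}(x^k,y^k,z^k,\lambda^k)$, abbreviate $L_G:=L_G^{r,M}$ and $L_{zy}:=L_{zy}^{r^2,3M/5,4M/5}$, and set $c:=\tfrac{4\beta_y^2(1+(L_GL_{zy})^2)}{\beta}$, so that $\Psi_{r,M}=\mathcal{L}^k+c\|y^k-y^{k-1}\|^2$. First I would decompose $\mathcal{L}^{k+1}-\mathcal{L}^k$ into the contributions of updating $x$, then $y$, then $z$, then $\lambda$ in turn. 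The key structural point is that both the $x$- and the $y$-subproblem are strongly convex once the penalties are large enough, so the local gradient-descent solves of~\prettyref{eq:step-a-G} and~\prettyref{eq:step-b-G} reach the unique global minimizers characterized by~\prettyref{eq:optimality-a-G} and~\prettyref{eq:optimality-b-G}, which makes the quadratic-growth inequality available. For the $x$-step the subproblem objective is $f$ plus a convex quadratic plus $\tfrac{\beta_x}{2}\|x-x^k\|^2$, hence $(\beta_x-L_f)$-strongly convex by $L_f$-smoothness of $f$, giving $\mathcal{L}_{r,M}(x^k,y^k,z^k,\lambda^k)-\mathcal{L}_{r,M}(x^{k+1},y^k,z^k,\lambda^k)\ge\tfrac{2\beta_x-L_f}{2}\|x^{k+1}-x^k\|^2$, the extra $\beta_x/2$ coming from the discarded proximal term. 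Running the same argument on the $y$-step---now using that $G_{r,M}$ has $L_G$-Lipschitz gradient by~\prettyref{lem:extension}, so the $y$-subproblem is $(\beta+\beta_y-L_G)$-strongly convex under~\prettyref{eq:param-c}---gives a decrease of at least $\tfrac{\beta+2\beta_y-L_G}{2}\|y^{k+1}-y^k\|^2$. The $z$-step does not increase $\mathcal{L}_{r,M}$ at all, because $\mathcal{L}_{r,M}$ sees $z$ only through $G_{r,M}(y^{k+1},\cdot)$ and the definition of $\CLIP$ forces $G_{r,M}(y^{k+1},z^{k+1})\le G_{r,M}(y^{k+1},z^k)$.

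The only ascent step is the dual update, whose contribution equals $\beta^{-1}\|\lambda^{k+1}-\lambda^k\|^2$ after substituting $Ax^{k+1}-y^{k+1}=\beta^{-1}(\lambda^{k+1}-\lambda^k)$; I would bound it with~\prettyref{lem:lambda}, which yields the two terms $\tfrac{4(\beta_y^2+L_G^2)}{\beta}\|y^{k+1}-y^k\|^2$ and $c\|y^k-y^{k-1}\|^2$. The memory term $c\|y^k-y^{k-1}\|^2$ carried inside $\Psi_{r,M}$ is calibrated precisely so that, when forming $\Psi^{k+1}-\Psi^k$, the $\|y^k-y^{k-1}\|^2$ contributions cancel identically. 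What remains is a negative multiple of $\|x^{k+1}-x^k\|^2$ with coefficient $\tfrac{2\beta_x-L_f}{2}$ and a negative multiple of $\|y^{k+1}-y^k\|^2$ with coefficient $\tfrac{\beta+2\beta_y-L_G}{2}-\tfrac{4(\beta_y^2+L_G^2)}{\beta}-c$.

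It then remains to check that both coefficients dominate $\beta_x/4$ and $\beta_y/4$ respectively. The $x$-coefficient bound $\tfrac{2\beta_x-L_f}{2}\ge\tfrac{\beta_x}{4}$ is equivalent to $\beta_x\ge\tfrac23L_f$, which follows from~\prettyref{eq:param-b}. For the $y$-coefficient, $\tfrac{4(\beta_y^2+L_G^2)}{\beta}+c=\tfrac{8\beta_y^2+4L_G^2+4\beta_y^2(L_GL_{zy})^2}{\beta}\le\tfrac{\beta_y}{4}$ exactly by~\prettyref{eq:param-c2}, so the coefficient is at least $\tfrac{\beta+2\beta_y-L_G}{2}-\tfrac{\beta_y}{4}$, and $\beta\ge 2L_G$ from~\prettyref{eq:param-c} makes this at least $\tfrac{\beta_y}{4}$. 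Rearranging gives~\prettyref{eq:delta}.

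I expect the main obstacle to be bookkeeping rather than anything conceptual: the $y$-ledger is where the dual-update increase, the memory term inside $\Psi_{r,M}$, and the three competing parameter inequalities~\prettyref{eq:param-c} and~\prettyref{eq:param-c2} all have to fit together with exactly the right constants, and a single mismatch there breaks the descent. A secondary point requiring care is that~\prettyref{eq:step-a-G} and~\prettyref{eq:step-b-G} are only solved by local gradient descent; this is harmless because $\beta_x>L_f$ and $\beta>2L_G$ make those subproblems strongly convex, so the stationary points produced by~\prettyref{eq:optimality-a-G} and~\prettyref{eq:optimality-b-G} are in fact the unique global minimizers, which legitimizes the quadratic-growth inequalities used above.
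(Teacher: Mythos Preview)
Your proof is correct in substance and reaches the same conclusion, but it takes a more laborious route than the paper and leans on parameter conditions the lemma does not list in its hypothesis. The paper's argument for the $x$- and $y$-steps is one line: since $x^{k+1}$ is a (local) minimizer of the proximal objective starting from $x^k$, one simply compares values at $x^{k+1}$ and $x^k$ to get
\[
\mathcal{L}_{r,M}(x^{k+1},y^{k+1},z^k,\lambda^k)+\tfrac{\beta_x}{2}\|x^{k+1}-x^k\|^2+\tfrac{\beta_y}{2}\|y^{k+1}-y^k\|^2\le\mathcal{L}_{r,M}(x^k,y^k,z^k,\lambda^k),
\]
with no need for strong convexity, Lipschitzness of $\nabla f$, or \prettyref{eq:param-b}/\prettyref{eq:param-c}. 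This already yields the $\tfrac{\beta_x}{2}$ decrement (trivially $\ge\tfrac{\beta_x}{4}$), and after the dual ascent is absorbed via \prettyref{lem:lambda} and the memory term, the single condition \prettyref{eq:param-c2} suffices for the $y$-ledger. Your quadratic-growth route does produce sharper constants ($\tfrac{2\beta_x-L_f}{2}$ and $\tfrac{\beta+2\beta_y-L_G}{2}$), but to justify that the local gradient-descent solves actually hit the global minimizer---and hence that quadratic growth applies---you must invoke $\beta_x>L_f$ from \prettyref{eq:param-b} and $\beta\ge 2L_G$ from \prettyref{eq:param-c}, neither of which appears in the lemma's stated hypothesis. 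In the ambient context of \prettyref{ass:param} this is harmless, but as a proof of the lemma \emph{as stated} the paper's simpler descent-by-comparison argument is cleaner and uses exactly the one parameter inequality it advertises.
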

\begin{proof}
From~\prettyref{eq:step-a-G} and~\prettyref{eq:step-b-G}, we have:
\begin{equation}
\begin{aligned}
\label{eq:xy-optimality-G}
&\mathcal{L}_{r,M}(x^{k+1},y^{k+1},z^k,\lambda^k)+\\
&\frac{\beta_y}{2}\|y^{k+1}-y^k\|^2+\frac{\beta_x}{2}\|x^{k+1}-x^k\|^2
\\\leq&\mathcal{L}_{r,M}(x^{k+1},y^k,z^k\lambda^k)+\frac{\beta_x}{2}\|x^{k+1}-x^k\|^2\\
\leq&\mathcal{L}_{r,M}(x^k,y^k,z^k,\lambda^k).
\end{aligned}
\end{equation}
Now from~\prettyref{eq:xy-optimality-G}, ~\prettyref{eq:step-d-G}, and~\prettyref{lem:lambda}, we have:
\begin{align*}
&\mathcal{L}_{r,M}(x^{k+1},y^{k+1},z^{k+1},\lambda^{k+1})\\
\numleq{1}&\mathcal{L}_{r,M}(x^{k+1},y^{k+1},z^k,\lambda^{k+1})\\
=&\mathcal{L}_{r,M}(x^{k+1},y^{k+1},z^k,\lambda^k)+\\
&(\lambda^{k+1}-\lambda^k)^T(Ax^{k+1}-y^{k+1})\\
=&\mathcal{L}_{r,M}(x^{k+1},y^{k+1},z^k,\lambda^k)+\frac{1}{\beta}\|\lambda^{k+1}-\lambda^k\|^2\\
\numleq{2}&\mathcal{L}_{r,M}(x^{k+1},y^{k+1},z^k,\lambda^k)+\\
&\frac{4(\beta_y^2+(L_G^{r,M})^2)}{\beta}\|y^{k+1}-y^k\|^2+\\
&\frac{4\beta_y^2(1+(L_G^{r,M}L_{zy}^{r^2,3M/5,4M/5})^2)}{\beta}\|y^k-y^{k-1}\|^2\\
\numleq{3}&\mathcal{L}_{r,M}(x^k,y^k,z^k,\lambda^k)+\\
&\left[\frac{4(\beta_y^2+(L_G^{r,M})^2)}{\beta}-\frac{\beta_y}{2}\right]\|y^{k+1}-y^k\|^2+\\
&\frac{4\beta_y^2(1+(L_G^{r,M}L_{zy}^{r^2,3M/5,4M/5})^2)}{\beta}\|y^k-y^{k-1}\|^2-\\
&\frac{\beta_x}{2}\|x^{k+1}-x^k\|^2,
\end{align*}
where, specifically, (1) is due to the definition of $\CLIP$, (2) is due to~\prettyref{lem:lambda}, and (3) is due to~\prettyref{eq:xy-optimality-G}. The result follows by our choice of parameter:
\begin{align*}
&\frac{4(\beta_y^2+(L_G^{r,M})^2)}{\beta}-\frac{\beta_y}{2}\\
\leq&-\frac{4\beta_y^2(1+(L_G^{r,M}L_{zy}^{r^2,3M/5,4M/5})^2)}{\beta}-\frac{\beta_y}{4}\\
\Rightarrow&\beta\geq\frac{32\beta_y^2+16\beta_y^2(L_G^{r,M}L_{zy}^{r^2,3M/5,4M/5})^2+16(L_G^{r,M})^2}{\beta_y},
\end{align*}
and all is proved.
\end{proof}
We can further bound our proposed Lyapunov candidate $\Psi_{r,M}$ from below as follows:
\begin{lemma}
\label{lem:LowerBound}
Taking~\prettyref{ass:bivariable} and~\prettyref{eq:param-c5}, then we have for FT-ADMM$_G$:
\begin{align*}
&\Psi_{r,M}(x^k,y^k,z^k,\lambda^k,y^{k-1})\\
\geq&f(x^k)+G_{r,M}(Ax^k,z^k)+\\
&\left[\frac{\beta-L_{G}^{r,M}}{2}-\frac{\beta(\beta_y+L_G^{r,M}L_{zy}^{r^2,3M/5,4M/5})^2}{16\beta_y^2(1+(L_G^{r,M}L_{zy}^{r^2,3M/5,4M/5})^2)}\right]\\
&\|Ax^k-y^k\|^2\\
\geq&f(x^k)+G_{r,M}(Ax^k,z^k)
\geq\underline{G}_{r,M}.
\end{align*}
\end{lemma}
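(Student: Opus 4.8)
The plan is to derive the lower bound by expanding $\Psi_{r,M}$, replacing $G_{r,M}(y^k,z^k)$ by $G_{r,M}(Ax^k,z^k)$ through the descent inequality, and then neutralizing the multiplier term $(\lambda^k)^T(Ax^k-y^k)$ via the $y$-update optimality condition, using the extra quadratic term built into $\Psi_{r,M}$ to absorb the resulting $\|y^k-y^{k-1}\|^2$ residual. Throughout I abbreviate $L_G:=L_G^{r,M}$, $L_z:=L_{zy}^{r^2,3M/5,4M/5}$, and $C:=4\beta_y^2(1+(L_GL_z)^2)/\beta$, so that the trailing term of $\Psi_{r,M}$ is $C\|y^k-y^{k-1}\|^2$ and $\Psi_{r,M}(x^k,y^k,z^k,\lambda^k,y^{k-1})=f(x^k)+G_{r,M}(y^k,z^k)+\tfrac{\beta}{2}\|Ax^k-y^k\|^2+(\lambda^k)^T(Ax^k-y^k)+C\|y^k-y^{k-1}\|^2$; I argue for $k\ge 1$, the case $k=0$ being immediate since $Ax^0=y^0$ makes both the cross term and $\|Ax^0-y^0\|^2$ vanish (with the convention $y^{-1}:=y^0$).

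First I would apply \prettyref{eq:Lipschitz-b}, specialized to the variation from $(y^k,z^k)$ to $(Ax^k,z^k)$ (so $z$ is held at $z^k$), which gives $G_{r,M}(y^k,z^k)\ge G_{r,M}(Ax^k,z^k)-\nabla_yG_{r,M}(y^k,z^k)^T(Ax^k-y^k)-\tfrac{L_G}{2}\|Ax^k-y^k\|^2$. Substituting this into the expansion of $\Psi_{r,M}$ and collecting the terms linear in $Ax^k-y^k$ converts the coefficient of $\|Ax^k-y^k\|^2$ from $\tfrac{\beta}{2}$ to $\tfrac{\beta-L_G}{2}$ and leaves a single linear term, yielding
\[
\Psi_{r,M}\ge f(x^k)+G_{r,M}(Ax^k,z^k)+\tfrac{\beta-L_G}{2}\|Ax^k-y^k\|^2+\bigl(\lambda^k-\nabla_yG_{r,M}(y^k,z^k)\bigr)^T(Ax^k-y^k)+C\|y^k-y^{k-1}\|^2.
\]

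Next I would control that linear term. From the first-order condition \prettyref{eq:optimality-b-G} written at iteration $k-1$, one has $\lambda^k=\beta_y(y^k-y^{k-1})+\nabla_yG_{r,M}(y^k,z^{k-1})$, so $\lambda^k-\nabla_yG_{r,M}(y^k,z^k)$ equals $\beta_y(y^k-y^{k-1})$ plus the gradient difference $\nabla_yG_{r,M}(y^k,z^{k-1})-\nabla_yG_{r,M}(y^k,z^k)$. By the Lipschitz-gradient estimate \prettyref{eq:Lipschitz-a} and the clip bound \prettyref{eq:CLIP}, which gives $\|z^k-z^{k-1}\|\le L_z\|y^k-y^{k-1}\|$ (the modulus $L_z$ being the one furnished by \prettyref{lem:zy}), this yields $\|\lambda^k-\nabla_yG_{r,M}(y^k,z^k)\|\le(\beta_y+L_GL_z)\|y^k-y^{k-1}\|$. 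I would then lower-bound the cross term by Young's inequality with weight $\alpha:=\beta(\beta_y+L_GL_z)^2/\bigl(8\beta_y^2(1+(L_GL_z)^2)\bigr)$, chosen precisely so that $(\beta_y+L_GL_z)^2/(2\alpha)=C$; the two $\|y^k-y^{k-1}\|^2$ contributions then cancel exactly, leaving $\Psi_{r,M}\ge f(x^k)+G_{r,M}(Ax^k,z^k)+\bigl(\tfrac{\beta-L_G}{2}-\tfrac{\alpha}{2}\bigr)\|Ax^k-y^k\|^2$, and $\tfrac{\beta-L_G}{2}-\tfrac{\alpha}{2}$ is exactly the bracketed coefficient in the statement, which is the first inequality. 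The second inequality amounts to $\tfrac{\beta-L_G}{2}\ge\tfrac{\alpha}{2}$, i.e.\ $\beta\bigl[1-(\beta_y+L_GL_z)^2/\bigl(8\beta_y^2(1+(L_GL_z)^2)\bigr)\bigr]\ge L_G$, which follows from \prettyref{eq:param-c5} because the extra summand $2(2M/5-\underline{G}_{r,M})/\delta^2$ on its right-hand side is nonnegative ($\underline{G}_{r,M}\le 0\le 2M/5$). The final bound $f(x^k)+G_{r,M}(Ax^k,z^k)\ge\underline{G}_{r,M}$ then uses $f\ge 0$ together with the uniform lower bound on $G_{r,M}$ from \prettyref{lem:extension}(iv).

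The main obstacle is the exact bookkeeping of the $\|y^k-y^{k-1}\|^2$ terms: the coefficient baked into $\Psi_{r,M}$, the constant obtained when estimating $\|\lambda^k-\nabla_yG_{r,M}(y^k,z^k)\|$, and the Young weight $\alpha$ must all line up so that the $y$-increment terms cancel precisely while the residual $\|Ax^k-y^k\|^2$ coefficient stays nonnegative — a balance that is exactly what \prettyref{eq:param-c5} encodes, and which also pins down why $\alpha$ must be taken with that particular value rather than optimized freely. A secondary subtlety to handle carefully is that \prettyref{eq:optimality-b-G} at step $k-1$ involves $z^{k-1}$ rather than $z^k$, which is why the $z$-increment has to be reintroduced and controlled through the clip bound instead of vanishing.
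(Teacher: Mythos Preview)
Your proposal is correct and follows essentially the same approach as the paper's proof: both use the Lipschitz descent inequality~\prettyref{eq:Lipschitz-b} to pass from $G_{r,M}(y^k,z^k)$ to $G_{r,M}(Ax^k,z^k)$, invoke the optimality condition~\prettyref{eq:optimality-b-G} (at step $k-1$) together with~\prettyref{eq:Lipschitz-a} and the clip bound~\prettyref{eq:CLIP} to control $\lambda^k-\nabla_yG_{r,M}(y^k,z^k)$, and then apply Young's inequality with the weight tuned so that the $\|y^k-y^{k-1}\|^2$ term is exactly absorbed by the extra quadratic built into $\Psi_{r,M}$. The only difference is cosmetic ordering (you apply the descent inequality before substituting for $\lambda^k$, the paper does the reverse), and your bookkeeping of the Young weight $\alpha$ and its cancellation with $C$ is accurate.
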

\begin{proof}
Our desired result then follows due to the following inequality:
\begin{align*}
&\mathcal{L}_{r,M}(x^k,y^k,z^k,\lambda^k)\\
=&f(x^k)+G_{r,M}(y^k,z^k)+\\
&\frac{\beta}{2}\|Ax^k-y^k\|^2+(\lambda^k)^T(Ax^k-y^k)\\
\numeq{1}&f(x^k)+G_{r,M}(y^k,z^k)+\frac{\beta}{2}\|Ax^k-y^k\|^2+\\
&(\beta_y(y^k-y^{k-1})+\nabla_yG_{r,M}(y^k,z^{k-1}))^T(Ax^k-y^k)\\
=&f(x^k)+G_{r,M}(y^k,z^k)+\frac{\beta}{2}\|Ax^k-y^k\|^2+\\
&(\beta_y(y^k-y^{k-1})+\nabla_yG_{r,M}(y^k,z^k))^T(Ax^k-y^k)+\\
&(\nabla_yG_{r,M}(y^k,z^{k-1})-\nabla_yG_{r,M}(y^k,z^k))^T(Ax^k-y^k)\\
\numgeq{2}&f(x^k)+G_{r,M}(Ax^k,z^k)+\\
&\frac{\beta-L_{G}^{r,M}}{2}\|Ax^k-y^k\|^2+\beta_y(y^k-y^{k-1})^T(Ax^k-y^k)+\\
&(\nabla_yG_{r,M}(y^k,z^{k-1})-\nabla_yG_{r,M}(y^k,z^k))^T(Ax^k-y^k)\\
\numgeq{3}&f(x^k)+G_{r,M}(Ax^k,z^k)+\frac{\beta-L_{G}^{r,M}}{2}\|Ax^k-y^k\|^2-\\
&\beta_y\|y^k-y^{k-1}\|\|Ax^k-y^k\|-\\
&L_{G}^{r,M}\|z^k-z^{k-1}\|\|Ax^k-y^k\|\\
\numgeq{4}&f(x^k)+G_{r,M}(Ax^k,z^k)+\frac{\beta-L_{G}^{r,M}}{2}\|Ax^k-y^k\|^2\\
&-(\beta_y+L_G^{r,M}L_{zy}^{r^2,3M/5,4M/5})\|y^k-y^{k-1}\|\|Ax^k-y^k\|\\
\geq&f(x^k)+G_{r,M}(Ax^k,z^k)+\\
&\left[\frac{\beta-L_{G}^{r,M}}{2}-\frac{\beta(\beta_y+L_G^{r,M}L_{zy}^{r^2,3M/5,4M/5})^2}{16\beta_y^2(1+(L_G^{r,M}L_{zy}^{r^2,3M/5,4M/5})^2)}\right]\\
&\|Ax^k-y^k\|^2\\
&-\frac{4\beta_y^2(1+(L_G^{r,M}L_{zy}^{r^2,3M/5,4M/5})^2)}{\beta}\|y^k-y^{k-1}\|^2,
\end{align*}
and the result follows by our choice of~\prettyref{eq:param-c5}. Here (1) is due to~\prettyref{eq:optimality-b-G}, (2) and (3) are due to~\prettyref{lem:extension} iii) as well as~\prettyref{eq:Lipschitz-a} and~\prettyref{eq:Lipschitz-b}, and (4) is due to~\prettyref{eq:CLIP}.
\end{proof}
\subsection{\label{sec:connection}Connection between BC-ADMM and BC-ADMM$_G$}
We show that the $x^k,y^k,z^k$ sequences generated by BC-ADMM and BC-ADMM$_G$ are the same under the proper parameter choice. as a result, BC-ADMM has the same convergence speed as BC-ADMM$_G$, aka. $O(\epsilon^{-2})$. To begin with, we need to analyze the behavior of the first iteration of BC-ADMM$_G$ as follows:
\begin{lemma}
We take~\prettyref{ass:bivariable} and~\prettyref{ass:param}. The first iteration of BC-ADMM$_G$ satisfies:
\begin{subequations}
\begin{align}[left={\empheqlbrace}]
\label{eq:init-a}
&\|x^1-x^0\|\leq\frac{\delta}{2\|A\|}\\
\label{eq:init-b}
&\|y^1-Ax^1\|\leq\frac{\beta_y+L_G^{r,M}}{\beta_y+\beta-L_G^{r,M}}\frac{\delta}{2}\leq\frac{\delta}{2}\\
\label{eq:init-c}
&\|y^1-y^0\|\leq\delta\\
\label{eq:init-d}
&z^1=z(y^1).
\end{align}
\end{subequations}
\end{lemma}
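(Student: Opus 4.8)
The plan is to dispatch the four estimates one at a time, using that the first $x$- and $y$-subproblems of BC-ADMM$_G$ are strongly convex with explicit moduli, so that each step length is bounded by the subproblem gradient evaluated at a convenient base point, while~\prettyref{eq:init-d} reduces to checking the two acceptance conditions of $\CLIP$. For~\prettyref{eq:init-a}, set $\phi(x)=\mathcal{L}_{r,M}(x,y^0,z^0,\lambda^0)+\tfrac{\beta_x}{2}\|x-x^0\|^2$, whose minimizer is $x^1$. On the bounded region that, by coercivity and the choice of $r$, contains $x^0$, $x^1$ and the segment between them, $\nabla f$ is $L_f$-Lipschitz, and since the $\beta$-penalty contributes a nonnegative amount to the monotonicity estimate, $\phi$ is $(\beta_x-L_f)$-strongly convex there, with $\beta_x-L_f\ge 2\|A\|\|\nabla f(x^0)+A^T\nabla_yg(Ax^0,z^0)\|/\delta$ by~\prettyref{eq:param-b}. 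Hence $\|x^1-x^0\|\le\|\nabla\phi(x^0)\|/(\beta_x-L_f)$; because $y^0=Ax^0$ annihilates the penalty gradient at $x^0$ and $\lambda^0=\nabla_yg(Ax^0,z^0)=\nabla_yG_{r,M}(Ax^0,z^0)$ (the last step since $(Ax^0,z^0)\in\mathcal{S}(r,M)$, where $g=G_{r,M}$), we get $\nabla\phi(x^0)=\nabla f(x^0)+A^T\nabla_yg(Ax^0,z^0)$, and substituting the~\prettyref{eq:param-b} bound cancels this norm to leave $\|x^1-x^0\|\le\delta/(2\|A\|)$.

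For~\prettyref{eq:init-b}, the one genuinely non-routine idea is to measure the $y$-subproblem's strong convexity \emph{from the consensus point} $Ax^1$. The objective $h(y)=G_{r,M}(y,z^0)+\tfrac{\beta}{2}\|Ax^1-y\|^2+(\lambda^0)^T(Ax^1-y)+\tfrac{\beta_y}{2}\|y-y^0\|^2$ is globally $(\beta+\beta_y-L_G^{r,M})$-strongly convex: $G_{r,M}$ is $C^2$ with $L_G^{r,M}$-Lipschitz gradient, so $\nabla_y^2 G_{r,M}\succeq-L_G^{r,M}I$ everywhere, the penalty adds $\beta I$, the proximal term adds $\beta_y I$, and $\beta\ge2L_G^{r,M}$ from~\prettyref{eq:param-c} makes the sum positive definite. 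Its minimizer is $y^1$, so $\|y^1-Ax^1\|\le\|\nabla h(Ax^1)\|/(\beta+\beta_y-L_G^{r,M})$. Evaluating $\nabla h$ at $Ax^1$ kills the penalty gradient, and using $\lambda^0=\nabla_yG_{r,M}(Ax^0,z^0)$ one gets $\nabla h(Ax^1)=\bigl[\nabla_yG_{r,M}(Ax^1,z^0)-\nabla_yG_{r,M}(Ax^0,z^0)\bigr]+\beta_yA(x^1-x^0)$, of norm at most $(L_G^{r,M}+\beta_y)\|A\|\,\|x^1-x^0\|\le(L_G^{r,M}+\beta_y)\delta/2$ by~\prettyref{eq:init-a}. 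Dividing gives exactly the stated fraction, and $\beta\ge2L_G^{r,M}$ forces that fraction to be $\le1$, hence the bound $\le\delta/2$.

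The last two claims are short. \prettyref{eq:init-c} is the triangle inequality $\|y^1-y^0\|=\|y^1-Ax^0\|\le\|y^1-Ax^1\|+\|A\|\,\|x^1-x^0\|\le\delta/2+\delta/2$ via~\prettyref{eq:init-b} and~\prettyref{eq:init-a}. For~\prettyref{eq:init-d}, since $z^1=\CLIP(z(y^1),z^0,L_{zy}^{r^2,3M/5,4M/5}\|y^1-y^0\|)$, it suffices to show $z(y^1)$ exists and both acceptance conditions hold. With $(y^0,z^0)\in\mathcal{S}_\mathcal{Z}(r^1,2M/5)$ and $\|y^1-y^0\|\le\delta\le\delta^{r^1,2M/5,3M/5}$ (from~\prettyref{eq:init-c} and~\prettyref{eq:param-a2}),~\prettyref{lem:margin} places $(y^1,z^0)\in\mathcal{S}_\mathcal{Z}(r^2,3M/5)\subseteq\mathcal{S}_\mathcal{Z}(r,M)$, so~\prettyref{ass:bivariable}\,vi) makes $z(y^1)$ well-defined, and~\prettyref{lem:zy} (applicable since $\|y^1-y^0\|\le\delta\le\delta^{r^2,3M/5,4M/5}$) gives $\|z(y^1)-z^0\|=\|z(y^1)-z(y^0)\|\le L_{zy}^{r^2,3M/5,4M/5}\|y^1-y^0\|$, which is the distance condition. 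For the energy condition, $z(y^1)$ is the global minimizer of $g(y^1,\cdot)+\iota_\mathcal{Z}$ and $z^0\in\mathcal{Z}$, so $g(y^1,z(y^1))\le g(y^1,z^0)$; as both $(y^1,z(y^1))$ and $(y^1,z^0)$ lie in $\mathcal{S}(r,M)$, where $G_{r,M}=g$, this reads $G_{r,M}(y^1,z(y^1))\le G_{r,M}(y^1,z^0)$, so $\CLIP$ returns $z(y^1)$.

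The main obstacle — and essentially the only place requiring care — is the membership bookkeeping: certifying that $(Ax^0,z^0)$, $(Ax^1,z^0)$, $(y^1,z^0)$ and $(y^1,z(y^1))$ all land in sub-level sets $\mathcal{S}$ / $\mathcal{S}_\mathcal{Z}$ on which $g=G_{r,M}$ and on which~\prettyref{lem:zy} and~\prettyref{ass:bivariable}\,vi) apply. This is exactly what the nested radii $r^1,\dots,r^4,r$ and margins $\delta^{\,\cdot,\cdot,\cdot}$ of~\prettyref{ass:param} are engineered to provide through~\prettyref{lem:margin}; once the first-iteration estimates~\prettyref{eq:init-a} through~\prettyref{eq:init-c} are in hand, each required inclusion follows from a single invocation of~\prettyref{lem:margin}, so no new analytic difficulty arises beyond tracking which radius and margin is being used at each step.
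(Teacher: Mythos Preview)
Your proof is correct and follows essentially the same route as the paper: you phrase the step bounds~\prettyref{eq:init-a}--\prettyref{eq:init-b} via strong convexity of the subproblems while the paper manipulates the first-order optimality equations directly, but these are equivalent, and both then combine them for~\prettyref{eq:init-c} and verify the two $\CLIP$ acceptance conditions for~\prettyref{eq:init-d} via~\prettyref{lem:margin} and~\prettyref{lem:zy}. One small note on your closing paragraph: the inclusion $(y^1,z(y^1))\in\mathcal{S}(r,M)$ needed for the energy comparison is not just~\prettyref{lem:margin} but also requires the norm estimate $\|\TWO{y^1}{z(y^1)}\|\le r^1+\delta(1+L_{zy}^{r^2,3M/5,4M/5})\le r$, which follows from the Lipschitz bound on $z(\cdot)$ you already derived.
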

\begin{proof}
From our choice of initial value and~\prettyref{eq:optimality-b-G}, we have:
\begin{align*}
0=&\beta_y(y^1-Ax^0)+\nabla_yG_{r,M}(y^1,z^0)+\\
&\beta(y^1-Ax^1)-\nabla_yG_{r,M}(Ax^0,z^0)\\
=&\beta_y(y^1-Ax^1+Ax^1-Ax^0)+\beta(y^1-Ax^1)+\\
&[\nabla_yG_{r,M}(y^1,z^0)-\nabla_yG_{r,M}(Ax^1,z^0)]+\\
&[\nabla_yG_{r,M}(Ax^1,z^0)-\nabla_yG_{r,M}(Ax^0,z^0)].
\end{align*}
By the triangle inequality, we have:
\begin{equation}
\begin{aligned}
\label{eq:first-iter-b}
&(\beta_y+\beta)\|y^1-Ax^1\|\\
\leq&(\beta_y+L_G^{r,M})\|A(x^1-x^0)\|+L_G^{r,M}\|y^1-Ax^1\|\\
\Rightarrow&\|y^1-Ax^1\|\leq\frac{(\beta_y+L_G^{r,M})\|A\|}{\beta_y+\beta-L_G^{r,M}}\|x^1-x^0\|.
\end{aligned}
\end{equation}
From our choice of initial value and~\prettyref{eq:optimality-a-G}, we have:
\begin{align*}
0=&\beta_x(x^1-x^0)+\nabla f(x^1)+\\
&\beta A^T(Ax^1-y^0)+A^T\nabla_yG_{r,M}(Ax^0,z^0)\\
=&\beta_x(x^1-x^0)+[\nabla f(x^1)-\nabla f(x^0)]+\\
&\beta A^T(Ax^1-Ax^0)+\nabla f(x^0)+A^T\nabla_yG_{r,M}(Ax^0,z^0).
\end{align*}
Again by the triangle inequality, we have:
\begin{equation}
\begin{aligned}
\label{eq:first-iter-a}
&\beta_x\|x^1-x^0\|
\leq\|[\beta_xI+\beta A^TA](x^1-x^0)\|\\
\leq&L_f\|x^1-x^0\|+\|\nabla f(x^0)+A^T\nabla_yG_{r,M}(Ax^0,z^0)\|\\
\Rightarrow&\|x^1-x^0\|\\
\leq&\frac{1}{\beta_x-L_f}\|\nabla f(x^0)+A^T\nabla_yG_{r,M}(Ax^0,z^0)\|\\
\leq&\frac{1}{\beta_x-L_f}\|\nabla f(x^0)+A^T\nabla_yg(Ax^0,z^0)\|\numleq{1}\frac{\delta}{2\|A\|},
\end{aligned}
\end{equation}
where in (1) we have used~\prettyref{eq:param-b} and we have proved~\prettyref{eq:init-a}. By combining~\prettyref{eq:first-iter-b} and~\prettyref{eq:first-iter-a}, we have:
\begin{align}
\label{eq:first-iter-ab}
\|y^1-Ax^1\|\leq&\frac{\beta_y+L_G^{r,M}}{\beta_y+\beta-L_G^{r,M}}\frac{\delta}{2}\numleq{1}\frac{\delta}{2},
\end{align}
where in (1) we have used~\prettyref{eq:param-c} and we have proved~\prettyref{eq:init-b}. Further, we have by combining~\prettyref{eq:first-iter-b} and~\prettyref{eq:first-iter-ab}:
\begin{align}
\label{eq:first-iter-abc}
&\|y^1-y^0\|\leq\|y^1-Ax^1\|+\|Ax^1-Ax^0\|\leq\delta,
\end{align}
and we have proved~\prettyref{eq:init-c}. Finally, we have the following chain of inequality:
\begin{equation}
\begin{aligned}
\label{eq:z1}
&G_{r,M}(y^1,z^1)\numeq{4}G_{r,M}(y^1,z(y^1))\numeq{3}g(y^1,z(y^1))\\
\numleq{2}&g(y^1,z(y^0))\numeq{1}G_{r,M}(y^1,z(y^0))=G_{r,M}(y^1,z^0).
\end{aligned}
\end{equation}
To see the above inequality, we note that $\TWO{y^0}{z^0}\in\mathcal{S}_\mathcal{Z}(r^1,2M/5)\subseteq\mathcal{S}_\mathcal{Z}(r^4,4M/5)$ and $\|y^1-y^0\|\leq\delta\leq\delta^{r^4,4M/5,M}$ by~\prettyref{eq:first-iter-abc}. Now by~\prettyref{lem:extension}, we have (1) and (2) is due to optimality of $z(y^1)$. To see (3), we can apply~\prettyref{lem:zy} to show that $\|z(y^1)-z(y^0)\|\leq L_{zy}^{r^2,3M/5,4M/5}\|y^1-y^0\|$ so that we have:
\begin{align*}
\|\TWO{y^1}{z(y^1)}\|
\leq&\|\TWO{y^0}{z^0}\|+\|\TWO{y^1}{z(y^1)}-\TWO{y^0}{z^0}\|\\
\leq&r^1+\delta(1+L_{zy}^{r^2,3M/5,4M/5})\leq r.
\end{align*}
Also note that we have already shown by (1) and (2) that $g(y^1,z(y^1))\leq M$, so we have $\TWO{y^1}{z(y^1)}\in\mathcal{S}(r,M)$ where $G_{r,M}=g$, which is (3). Now by~\prettyref{eq:step-c-G} and the definition of $\CLIP$, we have proved (4), which is exactly~\prettyref{eq:init-d} and all is proved.
\end{proof}
Next, we show that the Lyapunov candidate in the first iteration can be upper bounded:
\begin{lemma}
\label{lem:initial-psi}
We take~\prettyref{ass:bivariable} and~\prettyref{ass:param}. The Lyapunov candidate in first iteration of BC-ADMM$_G$ satisfies
$\Psi_{r,M}(x^1,y^1,z^1,\lambda^1,y^0)\leq 2M/5$.
\end{lemma}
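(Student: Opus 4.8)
The idea is to write $\Psi_{r,M}(x^1,y^1,z^1,\lambda^1,y^0)$ out term by term and control every piece with the first-iteration estimates \eqref{eq:init-a}--\eqref{eq:init-d} and the parameter choices of \prettyref{ass:param}. Abbreviating the Lyapunov penalty coefficient as $C=4\beta_y^2\big(1+(L_G^{r,M}L_{zy}^{r^2,3M/5,4M/5})^2\big)/\beta$, we have
\begin{align*}
\Psi_{r,M}(x^1,y^1,z^1,\lambda^1,y^0)=\mathcal{L}_{r,M}(x^1,y^1,z^1,\lambda^1)+C\,\|y^1-y^0\|^2 .
\end{align*}
The first observation is the clean identity $\mathcal{L}_{r,M}(x^0,y^0,z^0,\lambda^0)=M/5$: because $y^0=Ax^0$ both the quadratic and the linear penalties vanish, and because $(y^0,z^0)\in\mathcal{S}_{\mathcal{Z}}(r^1,2M/5)\subseteq\mathcal{S}(r,M)$ (already used in the derivation of \eqref{eq:z1}), the Whitney extension $G_{r,M}$ coincides with $g$ there; hence $\mathcal{L}_{r,M}(x^0,y^0,z^0,\lambda^0)=f(x^0)+g(Ax^0,z^0)$, which equals $M/5$ by the normalization $M=5(f(x^0)+g(Ax^0))$ in \eqref{eq:param-a}. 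For the same reason $\lambda^0=\nabla_yg(y^0,z^0)=\nabla_yG_{r,M}(y^0,z^0)$.

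Next I would bound $\mathcal{L}_{r,M}(x^1,y^1,z^1,\lambda^1)$ from above. By the very definition of $\CLIP$ we always have $G_{r,M}(y^1,z^1)\le G_{r,M}(y^1,z^0)$ (this is $z^1=z(y^1)$ from \eqref{eq:init-d} together with \eqref{eq:z1}), so $\mathcal{L}_{r,M}(x^1,y^1,z^1,\lambda^1)\le\mathcal{L}_{r,M}(x^1,y^1,z^0,\lambda^1)$. Since $\lambda^1-\lambda^0=\beta(Ax^1-y^1)$ by \eqref{eq:step-d-G} and $\mathcal{L}_{r,M}$ depends on $\lambda$ only linearly, $\mathcal{L}_{r,M}(x^1,y^1,z^0,\lambda^1)=\mathcal{L}_{r,M}(x^1,y^1,z^0,\lambda^0)+\tfrac1\beta\|\lambda^1-\lambda^0\|^2$. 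The one point that needs care is that \prettyref{lem:lambda} cannot be invoked here, since its bound involves the nonexistent iterate $y^{-1}$. Instead I would estimate $\|\lambda^1-\lambda^0\|$ directly: the $y$-optimality condition \eqref{eq:optimality-b-G} at $k=0$ gives $\lambda^1=\beta_y(y^1-y^0)+\nabla_yG_{r,M}(y^1,z^0)$, and subtracting $\lambda^0=\nabla_yG_{r,M}(y^0,z^0)$ and applying the global Lipschitz bound \eqref{eq:Lipschitz-a} yields $\|\lambda^1-\lambda^0\|\le(\beta_y+L_G^{r,M})\,\|y^1-y^0\|$. Finally, the descent inequality \eqref{eq:xy-optimality-G} taken at $k=0$ reads $\mathcal{L}_{r,M}(x^1,y^1,z^0,\lambda^0)\le M/5-\tfrac{\beta_y}{2}\|y^1-y^0\|^2-\tfrac{\beta_x}{2}\|x^1-x^0\|^2$. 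Combining the three steps,
\begin{align*}
\Psi_{r,M}(x^1,y^1,z^1,\lambda^1,y^0)\le\frac{M}{5}+\Big(\frac{(\beta_y+L_G^{r,M})^2}{\beta}+C-\frac{\beta_y}{2}\Big)\|y^1-y^0\|^2-\frac{\beta_x}{2}\|x^1-x^0\|^2 .
\end{align*}

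It then remains to show the bracket is nonpositive. Using $(\beta_y+L_G^{r,M})^2\le 4(\beta_y^2+(L_G^{r,M})^2)$, the bracket is at most $4(\beta_y^2+(L_G^{r,M})^2)/\beta+C-\beta_y/2$, which is exactly the quantity bounded by $-\beta_y/4$ under \eqref{eq:param-c2} (the same computation as in the proof of \prettyref{lem:Lyapunov}). Dropping the resulting nonpositive multiples of $\|y^1-y^0\|^2$ and $\|x^1-x^0\|^2$ gives $\Psi_{r,M}(x^1,y^1,z^1,\lambda^1,y^0)\le M/5$, which is even stronger than the claimed $2M/5$. (If one prefers not to rely on the sign of the bracket, the bound $\|y^1-y^0\|\le\delta$ from \eqref{eq:init-c} lets one absorb that term into the slack $2M/5-M/5$ using \eqref{eq:param-b3} and \eqref{eq:param-c4}.) I expect the main obstacle to be purely in this first-iteration bookkeeping: estimating $\|\lambda^1-\lambda^0\|$ without the missing $y^{-1}$, and verifying that $G_{r,M}\equiv g$ at the base point so that the Lagrangian starts exactly at $M/5$; everything else reduces to algebra already carried out in the construction of $\mathrm{BC\text{-}ADMM}_G$ and its Lyapunov analysis.
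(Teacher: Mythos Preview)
Your argument is correct and in fact slightly slicker than the paper's. Both proofs share the same skeleton: start from $\mathcal{L}_{r,M}(x^0,y^0,z^0,\lambda^0)=f(x^0)+g(Ax^0,z^0)=M/5$, apply the $x,y$-descent \eqref{eq:xy-optimality-G}, use \eqref{eq:z1} to pass from $z^0$ to $z^1$, and then account for the shift $\lambda^0\to\lambda^1$, which contributes $(\lambda^1-\lambda^0)^T(Ax^1-y^1)=\beta\|Ax^1-y^1\|^2=\tfrac1\beta\|\lambda^1-\lambda^0\|^2$.

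The divergence is in how this last term is controlled. The paper keeps it as $\beta\|Ax^1-y^1\|^2$, invokes the quantitative bound \eqref{eq:init-b} on $\|Ax^1-y^1\|$, and then spends the two extra parameter conditions \eqref{eq:param-c3} and \eqref{eq:param-c4} to reduce $\beta\|Ax^1-y^1\|^2$ below $\delta^2/\beta\le M/5$, arriving at $\Psi\le 2M/5$. You instead bound $\|\lambda^1-\lambda^0\|\le(\beta_y+L_G^{r,M})\|y^1-y^0\|$ directly from \eqref{eq:optimality-b-G} at $k=0$ together with $\lambda^0=\nabla_yG_{r,M}(y^0,z^0)$; the resulting $\|y^1-y^0\|^2$ term is then absorbed by the $-\tfrac{\beta_y}{2}\|y^1-y^0\|^2$ already present, using only \eqref{eq:param-c2} (exactly the computation from \prettyref{lem:Lyapunov}). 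This yields the sharper $\Psi\le M/5$ and dispenses with \eqref{eq:param-c3}, \eqref{eq:param-c4}, and even the size estimates \eqref{eq:init-a}--\eqref{eq:init-c}. Your identification of the only delicate point---estimating $\|\lambda^1-\lambda^0\|$ without a $y^{-1}$---is exactly right, and your resolution via the base initialization of $\lambda^0$ is the cleanest way to handle it.
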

\begin{proof}
This is due to the following inequality:
\begin{align*}
&f(x^0)+g(Ax^0,z^0)\\
\numeq{1}&f(x^0)+G_{r,M}(Ax^0,z^0)=\mathcal{L}_{r,M}(x^0,y^0,z^0,\lambda^0)\\
\numgeq{2}&\mathcal{L}_{r,M}(x^1,y^1,z^0,\lambda^0)+
\frac{\beta_x}{2}\|x^1-x^0\|^2+
\frac{\beta_y}{2}\|y^1-y^0\|^2\\
\numgeq{3}&\mathcal{L}_{r,M}(x^1,y^1,z^1,\lambda^0)+
\frac{\beta_x}{2}\|x^1-x^0\|^2+
\frac{\beta_y}{2}\|y^1-y^0\|^2\\
=&\mathcal{L}_{r,M}(x^1,y^1,z^1,\lambda^1)-(\lambda^1-\lambda^0)^T(Ax^1-y^1)+\\
&\frac{\beta_x}{2}\|x^1-x^0\|^2+
\frac{\beta_y}{2}\|y^1-y^0\|^2\\
=&\mathcal{L}_{r,M}(x^1,y^1,z^1,\lambda^1)-\beta\|Ax^1-y^1\|^2+\\
&\frac{\beta_x}{2}\|x^1-x^0\|^2+
\frac{\beta_y}{2}\|y^1-y^0\|^2\\
\geq&\Psi_{r,M}(x^1,y^1,z^1,\lambda^1,y^0)-\beta\|Ax^1-y^1\|^2+\\
&\frac{\beta_x}{2}\|x^1-x^0\|^2+\frac{\beta_y}{2}\|y^1-y^0\|^2-\\
&\frac{4\beta_y^2(1+(L_G^{r,M}L_{zy}^{r^2,3M/5,4M/5})^2)}{\beta}\|y^1-y^0\|^2\\
\numgeq{4}&\Psi_{r,M}(x^1,y^1,z^1,\lambda^1,y^0)-\beta\|Ax^1-y^1\|^2+\\
&\frac{\beta_x}{2}\|x^1-x^0\|^2+\frac{\beta_y}{4}\|y^1-y^0\|^2\\
\numgeq{5}&\Psi_{r,M}(x^1,y^1,z^1,\lambda^1,y^0)-\frac{\beta(\beta_y+L_G^{r,M})^2}{(\beta_y+\beta-L_G^{r,M})^2}\frac{\delta^2}{4}\\
\numgeq{6}&\Psi_{r,M}(x^1,y^1,z^1,\lambda^1,y^0)-\frac{\delta^2}{\beta}.
\end{align*}
Here (1) uses our choice of initial guess and the fact that $\TWO{Ax^0}{z^0}\in\mathcal{S}(r,M)$. (2) is due to~\prettyref{eq:xy-optimality-G}. (3) is due to~\prettyref{eq:z1}. (4) is due to~\prettyref{eq:param-c2}. (5) is due to~\prettyref{eq:init-b}. (6) is due to~\prettyref{eq:param-c3}. Finally, our desired result is due to~\prettyref{eq:param-c4}. The proof is complete.
\end{proof}
We have established all the results about the initial iteration and we are now ready to show the equivalence between BC-ADMM and BC-ADMM$_G$. We prove the following result by induction:
\begin{lemma}
\label{lem:same-sequence}
We take~\prettyref{ass:bivariable} and~\prettyref{ass:param}. Then every iteration generated by BC-ADMM$_G$ satisfies:
\begin{subequations}
\begin{align}[left={\empheqlbrace}]
\label{eq:seq-a}
&y^k \text{ by BC-ADMM and BC-ADMM}_G\text{ is same}\\
\label{eq:seq-b}
&z^k=z(y^k)\\
\label{eq:seq-c}
&f(x^k)+g(Ax^k,z^k)\leq 2M/5.
\end{align}
\end{subequations}
In summary, the $x^k,y^k,z^k$ sequence generated by BC-ADMM$_G$ and BC-ADMM are the same.
\end{lemma}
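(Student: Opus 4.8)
The plan is to prove \eqref{eq:seq-a}, \eqref{eq:seq-b}, \eqref{eq:seq-c} simultaneously by induction on $k$, carrying two auxiliary invariants along the way: that $\lambda^k$ agrees between BC-ADMM and BC-ADMM$_G$, and that $\TWO{Ax^k}{z^k}\in\mathcal{S}_\mathcal{Z}(r^1,2M/5)$ (the latter follows from \eqref{eq:seq-c}: since $f\ge0$ and $f(\bullet)+g(A\bullet,\bullet)$ is coercive by \prettyref{ass:bivariable} ii), the sublevel set $\{f(x)+g(Ax,z)\le2M/5\}$ is bounded, and $z^k=z(y^k)\in\mathcal{Z}$). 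For $k=0$ everything is immediate from the shared initializations $y^0=Ax^0$, $z^0=z(y^0)$, $\lambda^0=\nabla_yg(y^0,z^0)$ and from $f(x^0)+g(Ax^0,z^0)=M/5\le2M/5$ by \eqref{eq:param-a}; for $k=1$ I would combine the first-iteration estimates \eqref{eq:init-c}, \eqref{eq:init-d} and the bound $\Psi_{r,M}(x^1,y^1,z^1,\lambda^1,y^0)\le2M/5$ of \prettyref{lem:initial-psi} with the coincidence argument below. The body of the proof is the inductive step $k\to k+1$ for $k\ge1$; throughout I write $\Psi^k_{r,M}$ for $\Psi_{r,M}(x^k,y^k,z^k,\lambda^k,y^{k-1})$.

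First I would bound the one-step displacements. Since \prettyref{lem:Lyapunov}, \prettyref{lem:initial-psi}, and \prettyref{lem:LowerBound} all hold for the BC-ADMM$_G$ iterates under \prettyref{ass:param}, iterating \prettyref{lem:Lyapunov} gives $\tfrac{\beta_x}{4}\|x^{k+1}-x^k\|^2+\tfrac{\beta_y}{4}\|y^{k+1}-y^k\|^2\le\Psi^k_{r,M}-\Psi^{k+1}_{r,M}\le\Psi^1_{r,M}-\underline{G}_{r,M}\le2M/5-\underline{G}_{r,M}$; by \eqref{eq:param-b2} and \eqref{eq:param-b3} this forces $\|A(x^{k+1}-x^k)\|\le\delta$ and $\|y^{k+1}-y^k\|\le\delta$. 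Combining the induction hypothesis $\TWO{y^k}{z^k}\in\mathcal{S}_\mathcal{Z}(r^1,2M/5)$ with \prettyref{lem:margin} (the $z$-coordinate is held fixed in the $y$-step) yields $\TWO{y^{k+1}}{z^k}\in\mathcal{S}(r^2,3M/5)\subseteq\mathcal{S}(r,M)$. Using \prettyref{ass:bivariable} vi) and \prettyref{lem:zy} (which give that $z(y^{k+1})$ exists and $\|z(y^{k+1})-z^k\|\le L_{zy}^{r^2,3M/5,4M/5}\|y^{k+1}-y^k\|$), together with the optimality $g(y^{k+1},z(y^{k+1}))\le g(y^{k+1},z^k)\le M$ and the nested radii $r^3,r^4,r$ of \prettyref{ass:param}, I would likewise place $\TWO{y^{k+1}}{z(y^{k+1})}\in\mathcal{S}(r,M)$. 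On $\mathcal{S}(r,M)$ the functions $g$ and $G_{r,M}$ agree up to second derivatives by \prettyref{lem:extension}, which is the entire reason the two algorithms coincide.

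Next I would establish the coincidences. The $x$-subproblems \eqref{eq:step-a} and \eqref{eq:step-a-G} have literally the same objective in $x$, because $G_{r,M}(y^k,z^k)$ enters only as an additive constant; with $y^k,\lambda^k$ agreeing (induction hypothesis) and the same local solver started from $x^k$, we get $x^{k+1}$ identical, and then $\lambda^{k+1}=\lambda^k+\beta(Ax^{k+1}-y^{k+1})$ agrees once $y^{k+1}$ does. For $y^{k+1}$: by the previous paragraph $\nabla_yG_{r,M}(y^{k+1},z^k)=\nabla_yg(y^{k+1},z^k)$, so the stationarity condition \eqref{eq:optimality-b-G} together with \eqref{eq:step-d-G} is exactly the first-order optimality condition of the strongly convex BC-ADMM $y$-subproblem \eqref{eq:step-b} ($g(\bullet,z^k)$ is convex by \prettyref{ass:bivariable} iv)), whose minimizer is unique; this gives \eqref{eq:seq-a} at $k+1$. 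For $z^{k+1}$: BC-ADMM sets $z^{k+1}=z(y^{k+1})$ by definition of $z(\cdot)$, while BC-ADMM$_G$ sets $z^{k+1}=\CLIP(z(y^{k+1}),z^k,L_{zy}^{r^2,3M/5,4M/5}\|y^{k+1}-y^k\|)$; the displacement condition for $\CLIP$ to return its first argument is precisely \prettyref{lem:zy}, and the descent condition $G_{r,M}(y^{k+1},z(y^{k+1}))\le G_{r,M}(y^{k+1},z^k)$ holds because both points lie in $\mathcal{S}(r,M)$ (so $G_{r,M}=g$ there) and $z(y^{k+1})$ minimizes $g(y^{k+1},\cdot)$ over $\mathcal{Z}\ni z^k$ — this reproduces the chain \eqref{eq:z1} at a general iteration, giving \eqref{eq:seq-b}. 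Finally, \prettyref{lem:LowerBound} gives $\Psi^{k+1}_{r,M}\ge f(x^{k+1})+G_{r,M}(Ax^{k+1},z^{k+1})$, and since $\TWO{Ax^{k+1}}{z^{k+1}}\in\mathcal{S}(r,M)$ this equals $f(x^{k+1})+g(Ax^{k+1},z^{k+1})$, which is $\le\Psi^1_{r,M}\le2M/5$; this is \eqref{eq:seq-c} at $k+1$ and also re-establishes the auxiliary invariant. Coincidence of the $x^k,y^k,z^k$ sequences then transfers the $O(\epsilon^{-2})$ rate of BC-ADMM$_G$ to BC-ADMM.

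The main obstacle is the bookkeeping in the first half of the inductive step: one must certify that every point the algorithm actually evaluates — $\TWO{Ax^k}{z^k}$, $\TWO{y^{k+1}}{z^k}$, and $\TWO{y^{k+1}}{z(y^{k+1})}$ — never leaves the single compact set $\mathcal{S}(r,M)$ on which $g$ equals its Whitney extension $G_{r,M}$. This is exactly why the nested radii $r^1<r^2<r^3<r^4<r$, the energy thresholds $2M/5<3M/5<4M/5<M$, the displacement budgets $\delta,\delta^{\bullet,\bullet,\bullet}$, and the large-penalty conditions \eqref{eq:param-b}--\eqref{eq:param-c5} are imposed in \prettyref{ass:param}; tracing which margin absorbs which one-step error is the bulk of the work. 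A secondary subtlety is an apparent circularity — bounding $\|y^{k+1}-y^k\|\le\delta$ needs $\Psi^1_{r,M}\le2M/5$, which needs the first-iteration estimates, which in turn need $\beta$ sufficiently large — but the induction, anchored by \prettyref{lem:initial-psi} and the preceding lemma on the first iteration of BC-ADMM$_G$, is precisely what breaks it.
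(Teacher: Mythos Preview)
Your proposal is correct and follows essentially the same approach as the paper: an induction anchored at $k=0,1$ (via the first-iteration lemma and \prettyref{lem:initial-psi}), displacement bounds from \prettyref{lem:Lyapunov}--\prettyref{lem:LowerBound}, the nested-radius chain $r^1<\cdots<r$ of \prettyref{ass:param} to keep every evaluated point inside $\mathcal{S}(r,M)$, and then the gradient-matching/strong-convexity argument for the $y$-step plus the $\CLIP$-condition check for the $z$-step. The only slip is that your stated auxiliary invariant $\TWO{Ax^k}{z^k}\in\mathcal{S}_\mathcal{Z}(r^1,2M/5)$ later mutates into $\TWO{y^k}{z^k}\in\mathcal{S}_\mathcal{Z}(r^1,2M/5)$; the paper keeps these separate, first placing $\TWO{Ax^k}{z^k}$ at radius $r^1$ and then using the bound $\|Ax^k-y^k\|\le\delta$ (from \prettyref{lem:LowerBound} and \eqref{eq:param-c5}) to push $\TWO{y^k}{z^k}$ to radius $r^2$ — you should make that intermediate step explicit when you flesh out the bookkeeping.
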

\begin{proof}
We use mathematical induction. The case with $k=0$ is trivially due to our choice of initial value.
Base Step ($k=1$,~\prettyref{eq:seq-b} and~\prettyref{eq:seq-c}): We have $f(x^0)+g(Ax^0,z^0)\leq M/5\leq 2M/5$ by our initial guess so $\TWO{Ax^0}{z^0}\in\mathcal{S}_\mathcal{Z}(r^1,2M/5)$. We have $\|Ax^0-Ax^1\|\leq\delta\leq\delta^{r^1,2M/5,3M/5}$ by~\prettyref{eq:init-a} and:
\begin{align*}
\|\TWO{Ax^1}{z^0}\|
\leq&\|\TWO{Ax^0}{z^0}\|+\|Ax^1-Ax^0\|\\
\leq&r^1+\delta^{r^1,2M/5,3M/5}=r^2,
\end{align*}
leading to $\TWO{Ax^1}{z^0}\in\mathcal{S}_\mathcal{Z}(r^2,3M/5)$. We also have $\|Ax^0-y^0\|=0\leq\delta^{r^1,2M/5,3M/5}$ so $\TWO{y^0}{z^0}\in\mathcal{S}_\mathcal{Z}(r^1,3M/5)\subseteq\mathcal{S}_\mathcal{Z}(r^2,3M/5)$. Next, we have $\|Ax^1-y^1\|\leq\delta\leq\delta^{r^2,3M/5,4M/5}$ by~\prettyref{eq:init-b} and:
\begin{align*}
\|\TWO{y^1}{z^0}\|
\leq&\|\TWO{Ax^1}{z^0}\|+\|y^1-Ax^1\|\\
\leq&r^2+\delta^{r^2,3M/5,4M/5}=r^3,
\end{align*}
leading to $\TWO{y^1}{z^0}\in\mathcal{S}_\mathcal{Z}(r^3,4M/5)$. Then, we have $\TWO{y^0}{z^0}\in\mathcal{S}_\mathcal{Z}(r^2,3M/5)$ and $\|y^1-y^0\|\leq\delta\leq\delta^{r^2,3M/5,4M/5}$ so by~\prettyref{lem:zy} and~\prettyref{eq:init-d}:
\begin{align*}
\|z^1-z^0\|=\|z(y^1)-z(y^0)\|\leq L_{zy}^{r^2,3M/5,4M/5}\|y^1-y^0\|.
\end{align*}
Now by optimality of $z(y^1)$, we know that $g(y^1,z^1)=g(y^1,z(y^1))\leq g(y^1,z^0)\leq 4M/5$ and:
\begin{align*}
\|\TWO{y^1}{z^1}\|
\leq&\|\TWO{y^1}{z^0}\|+\|z^1-z^0\|\\
\leq&r^3+L_{zy}^{r^2,3M/5,4M/5}\delta=r^4,
\end{align*}
leading to $\TWO{y^1}{z^1}\in\mathcal{S}_\mathcal{Z}(r^4,4M/5)$. Finally, we have $\|Ax^1-y^1\|\leq\delta\leq\delta^{r^4,4M/5,M}$ so that: $\TWO{Ax^1}{z^1}\in\mathcal{S}_\mathcal{Z}(r^4+\delta^{r^4,4M/5,M},M)=\mathcal{S}_\mathcal{Z}(r,M)$. However, we also have:
\begin{align*}
0\leq&f(x^1)+g(Ax^1,z^1)=f(x^1)+G_{r,M}(Ax^1,z^1)\\
\numleq{2}&\Psi_{r,M}(x^1,y^1,z^1,\lambda^1,y^0)\numleq{1}2M/5,
\end{align*}
where (1) is due to~\prettyref{lem:initial-psi} and (2) is due to~\prettyref{lem:LowerBound}. We have essentially proved~\prettyref{eq:seq-c} and~\prettyref{eq:seq-b} is due to~\prettyref{eq:init-d}. Now we have finished proving the base case.

Inductive Step ($k>1$,~\prettyref{eq:seq-b} and~\prettyref{eq:seq-c}): We have $f(x^k)+g(Ax^k,z^k)\leq M/5\leq 2M/5$ by our inductive assumption so $\TWO{Ax^k}{z^k}\in\mathcal{S}_\mathcal{Z}(r^1,2M/5)$. We have the following estimate in the change of $x^k$:
\begin{align*}
&\|x^{k+1}-x^k\|\\
\numleq{1}&\sqrt{\frac{4\left(\begin{array}{c}\Psi_{r,M}(x^k,y^k,z^k,\lambda^k,y^{k-1})-\\
\Psi_{r,M}(x^{k+1},y^{k+1},z^{k+1},\lambda^{k+1},y^k)\end{array}\right)}{\beta_x}}\\
\numleq{2}&\sqrt{\frac{4(\Psi_{r,M}(x^1,y^1,z^1,\lambda^1,y^0)-\underline{G}_{r,M})}{\beta_x}}\\
\numleq{3}&\sqrt{\frac{4(2M/5-\underline{G}_{r,M})}{\beta_x}}
\numleq{4}\frac{\delta}{\|A\|},
\end{align*}
where (1) is due to~\prettyref{eq:delta}, (2) is due to~\prettyref{lem:LowerBound}, (3) is due to~\prettyref{lem:initial-psi}, and (4) is due to~\prettyref{eq:param-b2}. We further have the following estimate in teh change of $y^k$:
\begin{align*}
&\|y^{k+1}-y^k\|\\
\numleq{1}&\sqrt{\frac{4\left(\begin{array}{c}\Psi_{r,M}(x^k,y^k,z^k,\lambda^k,y^{k-1})-\\
\Psi_{r,M}(x^{k+1},y^{k+1},z^{k+1},\lambda^{k+1},y^k)\end{array}\right)}{\beta_y}}\\
\numleq{2}&\sqrt{\frac{4(\Psi_{r,M}(x^1,y^1,z^1,\lambda^1,y^0)-\underline{G}_{r,M})}{\beta_y}}\\
\numleq{3}&\sqrt{\frac{4(2M/5-\underline{G}_{r,M})}{\beta_y}}
\numleq{4}\delta,
\end{align*}
where (1) is due to~\prettyref{eq:delta}, (2) is due to~\prettyref{lem:LowerBound}, (3) is due to~\prettyref{lem:initial-psi}, and (4) is due to~\prettyref{eq:param-b3}. We thus have $\|Ax^k-Ax^{k+1}\|\leq\delta\leq\delta^{r^1,2M/5,3M/5}$ and:
\begin{align*}
\|\TWO{Ax^{k+1}}{z^k}\|
\leq&\|\TWO{Ax^k}{z^k}\|+\|Ax^{k+1}-Ax^k\|\\
\leq&r^1+\delta^{r^1,2M/5,3M/5}=r^2,
\end{align*}
leading to $\TWO{Ax^{k+1}}{z^k}\in\mathcal{S}_\mathcal{Z}(r^2,3M/5)$. We further have the following bound on $\|Ax^{k+1}-y^{k+1}\|$ or $\|Ax^k-y^k\|$:
\begin{align*}
&\max(\|Ax^{k+1}-y^{k+1}\|^2,\|Ax^k-y^k\|^2)\\
\numleq{1}&\sqrt{\frac{2M/5-\underline{G}_{r,M}}{\left[\frac{\beta-L_{G}^{r,M}}{2}-\frac{\beta(\beta_y+L_G^{r,M}L_{zy}^{r^2,3M/5,4M/5})^2}{16\beta_y^2(1+(L_G^{r,M}L_{zy}^{r^2,3M/5,4M/5})^2)}\right]}}\numleq{2}\delta,
\end{align*}
where (1) is due to~\prettyref{lem:LowerBound} and~\prettyref{lem:initial-psi}, while (2) is due to~\prettyref{eq:param-c5}. Combined with the fact that $\TWO{Ax^k}{z^k}\in\mathcal{S}_\mathcal{Z}(r^1,2M/5)$, we know that $\TWO{y^k}{z^k}\in\mathcal{S}(r^2,3M/5)$. Next, we combine the fact that $\TWO{Ax^{k+1}}{z^k}\in\mathcal{S}_\mathcal{Z}(r^2,3M/5)$ and:
\begin{align*}
\|\TWO{y^{k+1}}{z^k}\|
\leq&\|\TWO{Ax^{k+1}}{z^k}\|+\|y^{k+1}-Ax^{k+1}\|\\
\leq&r^2+\delta^{r^2,3M/5,4M/5}=r^3,
\end{align*}
leading to $\TWO{y^{k+1}}{z^k}\in\mathcal{S}_\mathcal{Z}(r^3,4M/5)$. Then, we have $\TWO{y^k}{z^k}\in\mathcal{S}_\mathcal{Z}(r^2,3M/5)$ and $\|y^{k+1}-y^k\|\leq\delta\leq\delta^{r^2,3M/5,4M/5}$ so by~\prettyref{lem:zy} and:
\begin{align*}
\|z^{k+1}-z^k\|=&\|z(y^{k+1})-z(y^k)\|\\
\leq&L_{zy}^{r^2,3M/5,4M/5}\|y^{k+1}-y^k\|,
\end{align*}
and~\prettyref{eq:seq-b} follows by an identical argument as in~\prettyref{eq:z1}. Now optimality of $z(y^{k+1})$, we know that $g(y^{k+1},z^{k+1})=g(y^{k+1},z(y^{k+1}))\leq g(y^{k+1},z^k)\leq 4M/5$ as well as:
\begin{align*}
\|\TWO{y^{k+1}}{z^{k+1}}\|
\leq&\|\TWO{y^{k+1}}{z^k}\|+\|z^{k+1}-z^k\|\\
\leq&r^3+L_{zy}^{r^2,3M/5,4M/5}\delta=r^4,
\end{align*}
leading to $\TWO{y^{k+1}}{z^{k+1}}\in\mathcal{S}_\mathcal{Z}(r^4,4M/5)$. Finally, we have $\|Ax^{k+1}-y^{k+1}\|\leq\delta\leq\delta^{r^4,4M/5,M}$ so that: $\TWO{Ax^{k+1}}{z^{k+1}}\in\mathcal{S}_\mathcal{Z}(r^4+\delta^{r^4,4M/5,M},M)=\mathcal{S}_\mathcal{Z}(r,M)$. However, we also have:
\begin{align*}
0\leq&f(x^{k+1})+g(Ax^{k+1},z^{k+1})\\
=&f(x^{k+1})+G_{r,M}(Ax^{k+1},z^{k+1})\\
\numleq{2}&\Psi_{r,M}(x^{k+1},y^{k+1},z^{k+1},\lambda^{k+1},y^k)\numleq{1}2M/5,
\end{align*}
where (1) is due to~\prettyref{lem:initial-psi} and (2) is due to~\prettyref{lem:LowerBound}. We have essentially proved~\prettyref{eq:seq-c}. Now we have finished proving the inductive case.

Base \& Inductive Step ($k\geq1$,~\prettyref{eq:seq-a}): From the analysis above, we know that $\TWO{y^k}{z^k}\in\mathcal{S}_\mathcal{Z}(r^2,3M/5)$ for every $k\geq1$ and $\|y^{k+1}-y^k\|\leq\delta\leq\delta^{r^2,3M/5,4M/5}$. As a result, $\TWO{y^{k+1}}{z^k}\in\mathcal{S}(r^3,4M/5)$. Therefore, the gradient of $G_{r,M}$ and $g$ matches and we have at $y^{k+1}$ generated by BC-ADMM$_G$:
\begin{align*}
0=&\nabla_y\left[\mathcal{L}_{r,M}(x^{k+1},y^{k+1},z^k,\lambda^k)+\frac{\beta_y}{2}\|y^{k+1}-y^k\|^2\right]\\
=&\nabla_y\left[\mathcal{L}(x^{k+1},y^{k+1},z^k,\lambda^k)+\frac{\beta_y}{2}\|y^{k+1}-y^k\|^2\right].
\end{align*}
But since the objective function in~\prettyref{eq:step-b} of BC-ADMM is strongly convex, whose optimal solution is unique, we must have $y^{k+1}$ generated by BC-ADMM is equivalent to that generated by BC-ADMM$_G$.

Finally, we still need to show that BC-ADMM$_G$ and BC-ADMM generates the same sequence. Note that the two algorithm uses different steps in the update of $y^k$ and $z^k$. But we have shown that $z^k=z(y^k)$ for every $k$ and that the update from the same $y^k$ leads to the same $y^{k+1}$. Our desired result follows.
\end{proof}
As our last step of analysis, we establish the convergence speed of BC-ADMM$_G$ in the following result:
\begin{lemma}
\label{lem:speed}
Under~\prettyref{ass:bivariable} and~\prettyref{ass:param}, BC-ADMM$_G$ converges at a speed of $O(\epsilon^{-2})$ to $\epsilon$-stationary point of~\prettyref{eq:sadmm}.
\end{lemma}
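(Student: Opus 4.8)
The plan is to run the by-now standard ADMM rate argument on the Lyapunov function $\Psi_{r,M}$ and then convert the summability of the primal increments into a decaying bound on the stationarity residual of~\prettyref{eq:sadmm}.

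First I would telescope the descent inequality~\prettyref{eq:delta} of~\prettyref{lem:Lyapunov}. Summing over $k=1,\dots,K$ and using that $\Psi_{r,M}$ is bounded below by $\underline{G}_{r,M}$ (\prettyref{lem:LowerBound}) together with the initial bound $\Psi_{r,M}(x^1,y^1,z^1,\lambda^1,y^0)\le 2M/5$ (\prettyref{lem:initial-psi}) yields
\[
\sum_{k=1}^{K}\left(\frac{\beta_x}{4}\|x^{k+1}-x^k\|^2+\frac{\beta_y}{4}\|y^{k+1}-y^k\|^2\right)\le \frac{2M}{5}-\underline{G}_{r,M}<\infty.
\]
Since $\|y^k-y^{k-1}\|^2$ is just the previous term of the same series, adding the shifted copy shows that $\sum_{k}\left(\|x^{k+1}-x^k\|^2+\|y^{k+1}-y^k\|^2+\|y^k-y^{k-1}\|^2\right)$ is also finite, with a bound of order $(2M/5-\underline{G}_{r,M})/\min(\beta_x,\beta_y)$. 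Hence a best-iterate-over-a-window argument produces an index $k\le K$ at which $\|x^{k+1}-x^k\|$, $\|y^{k+1}-y^k\|$, and $\|y^k-y^{k-1}\|$ are all $O(1/\sqrt{K})$.

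Second I would show that at such an index the stationarity residual is $O(1/\sqrt{K})$. Using the first-order optimality conditions~\prettyref{eq:optimality-a2-G} and~\prettyref{eq:optimality-b-G} to eliminate $\lambda^{k+1}$, one gets that $\nabla f(x^{k+1})+A^T\nabla_yG_{r,M}(y^{k+1},z^{k+1})$ equals a linear combination of $(x^{k+1}-x^k)$, $(y^{k+1}-y^k)$ and $\nabla_yG_{r,M}(y^{k+1},z^{k+1})-\nabla_yG_{r,M}(y^{k+1},z^k)$; the last difference is bounded by $L_G^{r,M}\|z^{k+1}-z^k\|\le L_G^{r,M}L_{zy}^{r^2,3M/5,4M/5}\|y^{k+1}-y^k\|$ by~\prettyref{lem:extension}~iii) and~\prettyref{eq:CLIP} (equivalently~\prettyref{lem:zy}). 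Likewise $\|Ax^{k+1}-y^{k+1}\|=\frac{1}{\beta}\|\lambda^{k+1}-\lambda^k\|$ is controlled by $\|y^{k+1}-y^k\|$ and $\|y^k-y^{k-1}\|$ through~\prettyref{lem:lambda}. Because~\prettyref{lem:same-sequence} keeps the iterates inside $\mathcal{S}_\mathcal{Z}(r^3,4M/5)\subseteq\mathcal{S}(r,M)$, where $G_{r,M}$ agrees with $g$ up to second derivatives and $z^{k+1}=z(y^{k+1})$, we may replace $G_{r,M}$ by $g$ in the residual, and the $z$-component of stationarity is exact since $z(y^{k+1})$ globally minimizes $g(y^{k+1},\cdot)$ over $\mathcal{Z}$. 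Putting the pieces together, the full residual at the chosen index is at most $C\sqrt{\|x^{k+1}-x^k\|^2+\|y^{k+1}-y^k\|^2+\|y^k-y^{k-1}\|^2}$ for an explicit constant $C$ depending only on $\beta_x,\beta_y,\beta,\|A\|,L_G^{r,M}$ and $L_{zy}^{r^2,3M/5,4M/5}$, hence $O(1/\sqrt{K})$.

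Consequently, to drive the residual below a tolerance $\epsilon$ it suffices to take $K=O(\epsilon^{-2})$ iterations, each calling the $x$-, $y$-, $z$- and $\lambda$-oracles a fixed number of times, which gives the claimed oracle complexity $O(\epsilon^{-2})$. I expect the only genuine work to be the second step: carefully assembling the stationarity bound while tracking the constants, and handling the fact that the $\lambda$-increment at step $k$ couples two consecutive $y$-increments, which forces the best-iterate-over-a-window reasoning rather than a single-index argument. The descent, lower-bound, and equivalence ingredients needed here are already established in the preceding lemmas, so no new structural difficulty is anticipated.
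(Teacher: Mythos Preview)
Your proposal is correct and follows essentially the same route as the paper's proof: telescope~\prettyref{lem:Lyapunov}, use~\prettyref{lem:LowerBound} and~\prettyref{lem:initial-psi} to bound the sum of squared increments, pick a best iterate, and then assemble the stationarity residual from~\prettyref{eq:optimality-a2-G},~\prettyref{eq:optimality-b-G},~\prettyref{lem:lambda}, and~\prettyref{eq:CLIP}. The only noteworthy difference is that the paper evaluates the residual at $(Ax^k,z^k)$ rather than $(y^{k+1},z^{k+1})$, so its $z$-stationarity term $\dist(N_\mathcal{Z}(z^k),-\nabla_zg(Ax^k,z^k))$ is not exactly zero but is bounded by $L_G^{r,M}\|Ax^k-y^k\|$; your choice of evaluating at $y^{k+1}$ makes the $z$-residual exact and is arguably the more natural reading of ``$\epsilon$-stationary point of~\prettyref{eq:sadmm},'' while the paper's version corresponds more directly to~\prettyref{eq:snlp}.
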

\begin{proof}
We can repeatedly applying~\prettyref{lem:Lyapunov} and, by letting $\beta_{xy}=\min(\beta_x,\beta_y)$, we have:
\begin{align*}
&\frac{\beta_{xy}}{4}\sum_{k=1}^{K}\|x^{k+1}-x^k\|^2+\|y^{k+1}-y^k\|^2\\
\leq&\frac{\beta_x}{4}\sum_{k=1}^{K}\|x^{k+1}-x^k\|^2+
\frac{\beta_y}{4}\sum_{k=1}^{K}\|y^{k+1}-y^k\|^2\\
\leq&\Psi_{r,M}(x^1,y^1,\lambda^1,y^0)-\Psi_{r,M}(x^{K+1},y^{K+1},\lambda^{K+1},y^K).
\end{align*}
Adding above equations twice and define $\theta^k=\|x^{k+1}-x^k\|^2+\|y^{k+1}-y^k\|^2+\|x^k-x^{k-1}\|^2+\|y^k-y^{k-1}\|^2$, we have by the second term in parameter choice and~\prettyref{lem:LowerBound}:
\begin{align*}
&\frac{\beta_{xy}}{4}\sum_{k=2}^{K}\theta^k\\
=&\frac{\beta_{xy}}{4}\left[\begin{array}{c}\sum_{k=1}^{K-1}\|x^{k+1}-x^k\|^2+\|y^{k+1}-y^k\|^2+\\
\sum_{k=2}^{K}\|x^{k+1}-x^k\|^2+\|y^{k+1}-y^k\|^2\end{array}\right]\\
\leq&2\Psi_{r,M}(x^1,y^1,\lambda^1,y^0)-2\Psi_{r,M}(x^{K+1},y^{K+1},\lambda^{K+1},y^K)\\
\leq&2(\Psi_{r,M}(x^1,y^1,\lambda^1,y^0)-\underline{G}_{r,M}).
\end{align*}
Therefore, we can bound the change in both $x$ and $y$ over consecutive iterations as follows:
\begin{align}
\min_{k=2,\cdots,K}\theta^k
\leq\frac{8(\Psi_{r,M}(x^1,y^1,\lambda^1,y^0)-\underline{G}_{r,M})}{\beta_{xy}K}.
\end{align}
Now define $k=\argmin{k=2,\cdots,K}\theta^k$ and we bound the norm of objective's gradient with respect to $x^k$.
\small
\begin{align*}
&\|\nabla f(x^k)+A^T\nabla_yg(Ax^k,z^k)\|^2\\
=&\|\nabla f(x^k)+A^T\nabla_y G_{r,M}(Ax^k,z^k)\|^2\\
=&\|\nabla f(x^k)+A^T\lambda^k+A^T(\nabla_yG_{r,M}(y^k,z^{k-1})-\lambda^k)+\\
&A^T(\nabla_yG_{r,M}(y^k,z^k)-\nabla_yG_{r,M}(y^k,z^{k-1}))+\\
&A^T(\nabla_yG_{r,M}(Ax^k,z^k)-\nabla_yG_{r,M}(y^k,z^k))\|^2\\
\numeq{1}&\|-\beta_x(x^k-x^{k-1})-\beta A^T(y^k-y^{k-1})-\beta_yA^T(y^k-y^{k-1})+\\
&A^T(\nabla_yG_{r,M}(y^k,z^k)-\nabla_yG_{r,M}(y^k,z^{k-1}))+\\
&A^T(\nabla_yG_{r,M}(Ax^k,z^k)-\nabla_yG_{r,M}(y^k,z^k))\|^2\\
\numleq{2}&4\beta_x^2\|x^k-x^{k-1}\|^2+4(\beta+\beta_y)^2\|A\|^2\|y^k-y^{k-1}\|^2+\\
&4(L_G^{r,M})^2\|A\|^2[\|z^k-z^{k-1}\|^2+\|Ax^k-y^k\|^2]\\
=&4\beta_x^2\|x^k-x^{k-1}\|^2+
4(\beta+\beta_y)^2\|A\|^2\|y^k-y^{k-1}\|^2+\\
&4(L_G^{r,M})^2\|A\|^2\left[\|z^k-z^{k-1}\|^2+\frac{\|\lambda^k-\lambda^{k-1}\|^2}{\beta^2}\right]\\
\numleq{3}&4\beta_x^2\|x^k-x^{k-1}\|^2+
4(\beta+\beta_y)^2\|A\|^2\|y^k-y^{k-1}\|^2+\\
&4(L_G^{r,M}L_{zy}^{r^2,3M/5,4M/5})^2\|A\|^2\|y^k-y^{k-1}\|^2+4\frac{(L_G^{r,M})^2}{\beta^2}\|A\|^2\\
&\left[\begin{array}{c}4(\beta_y^2+(L_G^{r,M})^2)\|y^{k+1}-y^k\|^2+\\
4\beta_y^2(1+(L_G^{r,M}L_{zy}^{r^2,3M/5,4M/5})^2)\|y^k-y^{k-1}\|^2\end{array}\right]\\
=&O(\theta^k).
\end{align*}
\normalsize
Here (1) is due to~\prettyref{eq:optimality-a2-G} and~\prettyref{eq:optimality-b-G}, (2) is due to~\prettyref{lem:extension} iii), and (3) is due to~\prettyref{lem:lambda}. Combined, we have:
\begin{equation}
\begin{aligned}
\label{eq:conv-a}
&\|\nabla f(x^k)+A^T\nabla_yg(Ax^k,z^k)\|^2\\
=&O\left(\frac{8(\Psi_{r,M}(x^1,y^1,\lambda^1,y^0)-\underline{G}_{r,M})}{\beta_{xy}K}\right)=O(\epsilon^2).
\end{aligned}
\end{equation}
Next, we bound the norm of objective's gradient with respect to $z^k$. By~\cite[Theorem 8.15]{rockafellar2009variational} and the fact that $z^k=z(y^k)$ due to~\prettyref{lem:same-sequence}, we have $-\nabla_zg(y^k,z^k)=N_\mathcal{Z}(z^k)$, with $N_\mathcal{Z}(z^k)$ being the normal cone of $\mathcal{Z}$ at $z^k$. As a result, we have: 
\small
\begin{align*}
&\dist(N_\mathcal{Z}(z^k),-\nabla_zg(Ax^k,z^k))^2\\
=&\dist(N_\mathcal{Z}(z^k),-\nabla_zG_{r,M}(Ax^k,z^k))^2\\
\numleq{1}&\|\nabla_zG_{r,M}(Ax^k,z^k)-\nabla_zG_{r,M}(y^k,z^k)\|^2\\
\numeq{2}&(L_G^{r,M})^2\|Ax^k-y^k\|^2=\frac{(L_G^{r,M})^2}{\beta^2}\|\lambda^k-\lambda^{k-1}\|^2\\
\numleq{3}&\frac{(L_G^{r,M})^2}{\beta^2}\left[\begin{array}{c}4(\beta_y^2+(L_G^{r,M})^2)\|y^{k+1}-y^k\|^2+\\4\beta_y^2(1+(L_G^{r,M}L_{zy}^{r^2,3M/5,4M/5})^2)\|y^k-y^{k-1}\|^2\end{array}\right]\\
=&O(\theta^k).
\end{align*}
\normalsize
Here (1) is due to the fact that $-\nabla_zG_{r,M}(y^k,z^k)=-\nabla_zg(y^k,z^k)\in N_\mathcal{Z}(z^k)$ and that $\dist(N_\mathcal{Z}(z^k),a)\leq\|a-b\|+\dist(N_\mathcal{Z}(z^k),b)$, (2) is due to~\prettyref{lem:extension} iii), and (3) is due to~\prettyref{lem:lambda}. Combined we have:
\begin{equation}
\begin{aligned}
\label{eq:conv-b}
&\dist(N_\mathcal{Z}(z^k),-\nabla_zg(Ax^k,z^k))^2\\
=&O\left(
\frac{8(\Psi_{r,M}(x^1,y^1,\lambda^1,y^0)-\underline{G}_{r,M})}{\beta_{xy}K}\right)=O(\epsilon^2).
\end{aligned}
\end{equation}

Our desired result follows by combining~\prettyref{eq:conv-a} and~\prettyref{eq:conv-b}.
\end{proof}
\begin{theorem}
\label{thm:BC-ADMM}
Taking~\prettyref{ass:bivariable} and under sufficiently large $\beta_x,\beta_y,\beta$, each iteration generated by BC-ADMM satisfies $f(x^k)+g(Ax^k,z^k)<\infty$. Further, BC-ADMM converges to the $\epsilon$-stationary solution of~\prettyref{eq:snlp} with an oracle complexity of $O(\epsilon^{-2})$.
\end{theorem}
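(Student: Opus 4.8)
The plan is to assemble the theorem from the chain of lemmas established in \prettyref{sec:BC-ADMM-G} through \prettyref{sec:connection}, since essentially all of the work has already been front-loaded there. First I would make precise what ``sufficiently large $\beta_x,\beta_y,\beta$'' means: it means any triple satisfying \prettyref{ass:param}. As noted immediately after \prettyref{ass:param}, such a triple exists --- one picks $M$ via \prettyref{eq:param-a}, then the radii $r^1,\dots,r^4,r$ and the margin $\delta$ via \prettyref{eq:param-a2}--\prettyref{eq:param-a3} (well defined thanks to \prettyref{lem:margin}, \prettyref{lem:zy}, and coercivity of $f(\bullet)+g(A\bullet,\bullet)$), then $\beta_y$ and $\beta$ large enough for the remaining inequalities, with $\beta_x$ determined last. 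So from here on I assume \prettyref{ass:param} holds.

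The feasibility claim then follows directly from \prettyref{lem:same-sequence}: under \prettyref{ass:bivariable} and \prettyref{ass:param} the sequences $x^k,y^k,z^k$ generated by BC-ADMM coincide with those generated by BC-ADMM$_G$, and moreover \prettyref{eq:seq-c} gives $f(x^k)+g(Ax^k,z^k)\le 2M/5<\infty$ for every $k$, which is exactly the first half of the statement.

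For the rate I would invoke \prettyref{lem:speed}, which shows BC-ADMM$_G$ reaches an $\epsilon$-stationary point of \prettyref{eq:sadmm} after $K=O(\epsilon^{-2})$ iterations: \prettyref{eq:conv-a} bounds $\|\nabla f(x^k)+A^T\nabla_y g(Ax^k,z^k)\|$ and \prettyref{eq:conv-b} bounds $\dist(N_\mathcal{Z}(z^k),-\nabla_z g(Ax^k,z^k))$, both by $O(\epsilon)$, using $z^k=z(y^k)$. Since the two algorithms produce identical iterates, the same $O(\epsilon^{-2})$ bound holds for BC-ADMM. The last small step is to observe that, because $z^k=z(y^k)$ and the slack $y$ can be eliminated through the consensus relation $y=Ax$, the two quantities just bounded are precisely the residuals of the first-order stationarity conditions for the reduced problem \prettyref{eq:snlp}; hence an $\epsilon$-stationary point of \prettyref{eq:sadmm} along the trajectory is an $\epsilon$-stationary point of \prettyref{eq:snlp}, and we are done.

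I expect the only genuine difficulty to sit upstream, inside \prettyref{lem:same-sequence}: one must show, by the induction carried out there, that the BC-ADMM$_G$ trajectory never exits the compact set $\mathcal{S}(r,M)$ on which the Whitney extension satisfies $G_{r,M}\equiv g$. This is exactly why the nested radii $r^1<r^2<r^3<r^4<r$ and the margin $\delta$ must be chosen as in \prettyref{ass:param}, and it is where the Lipschitz estimate for $z(y)$ from \prettyref{lem:zy}, the Lyapunov descent of \prettyref{lem:Lyapunov}, and the lower bound of \prettyref{lem:LowerBound} are used together to control $\|x^{k+1}-x^k\|$, $\|y^{k+1}-y^k\|$, and $\|Ax^k-y^k\|$ simultaneously by $O(\delta)$ at every step. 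Once that containment is in hand, everything else is bookkeeping.
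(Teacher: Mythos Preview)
Your proposal is correct and follows exactly the paper's approach: the paper's own proof is simply ``Combine the result of \prettyref{lem:same-sequence} and \prettyref{lem:speed},'' which is precisely what you do (with added, correct commentary on why \prettyref{ass:param} realizes ``sufficiently large'' and on where the real work lies upstream). Your observation that the residuals in \prettyref{eq:conv-a}--\prettyref{eq:conv-b} are exactly the stationarity conditions of \prettyref{eq:snlp} is a nice explicit justification the paper leaves implicit.
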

\begin{proof}
Combine the result of~\prettyref{lem:same-sequence} and~\prettyref{lem:speed}.
\end{proof}
\section{\label{sec:practical-convergence}Convergence of~\prettyref{alg:practical}}
In \prettyref{alg:practical}, we introduced several properties to allow automatic tuning of parameters and progressively introduce constraints. To formally argue for the convergence, we use the observation that with $z$ fixed, our objective $f(\bullet)+g(A\bullet,z)$ is convex and~\prettyref{alg:practical} becomes a special, two-block case of the proximal ADMM algorithm analyzed in~\cite{deng2017parallel}. According to their analysis, the parameters $\beta,\beta_x,\beta_y$ need to satisfy the following conditions:
\begin{align}
\label{eq:convex-convergence}
\begin{cases}
1>\epsilon_x+\epsilon_y\\
\beta_xI\succ\beta(1/\epsilon_x-1)A^TA\\
\beta_y>\beta(1/\epsilon_y-1)\\
\end{cases}.
\end{align}
To prove convergence, we show that our parameter choices in~\prettyref{alg:practical} always satisfies~\prettyref{eq:convex-convergence} by construction. It also satisfies~\prettyref{ass:param} after a sufficient number of parameter tuning iterations (below~\prettyref{ln:feasibility}). As a result, the feasibility of our solution is ensured by~\prettyref{thm:BC-ADMM}. Our result is formally claimed in~\prettyref{thm:asymptotic} and proved here.
\begin{proof}[Proof of~\prettyref{thm:asymptotic}]
We note that~\prettyref{ln:lazy-z} ensures that $x^\star$ stores the best solution with the smallest $f(x^\star)+g(Ax^\star,z^\star)\leq f(x^0)+g(Ax^0,z^0)<\infty$. We prove several properties of~\prettyref{alg:practical}.

We first prove that the parameter tuning iteration below~\prettyref{ln:feasibility} will be entered only finitely many times. This is because the parameter tuning iteration would restart the simulation from the best solution at $\langle x,z\rangle^{k+1}\gets\langle x,z\rangle^\star$ while resetting $y^{k+1}$ and $\lambda^{k+1}$. Further, each parameter tuning iteration would increase $\beta,\beta_y$ indefinitely. Since $\kappa>\kappa_y$, $\beta$ would be ultimately larger than $\beta_y$ by an arbitrarily large factor. We could easily see that $\beta_x$ would also be indefinitely large, which is obvious from the following rewrite:
\begin{align*}
\beta_x\gets\beta(1/\gamma+\beta/\beta_y-1)\|A^TA\|/\gamma.
\end{align*}
As a result, all the requirements from~\prettyref{ass:param} would be satisfied within finitely many parameter tuning iterations, after which all the iterations are strictly feasible by~\prettyref{thm:BC-ADMM} and the condition below~\prettyref{ln:feasibility} will never be met. As a result, we can safely ignore the finitely many iterations and assume that $\beta_x,\beta_y,\beta$ are never modified while all iterations are feasible.

Second, we investigate the number of times $z^\star$ will be updated to $z(y^{k+1})$. Note that we use a lazy strategy and update $z^\star$ only when the two conditions below~\prettyref{ln:lazy-z} are met. Suppose these conditions are not met and $z^\star$ is not updated, then our algorithm becomes the conventional ADMM~\cite{deng2016global} minimizing the convex objective function $f(\bullet)+g(A\bullet,z^\star)$. Due to our parameter choice,~\prettyref{eq:convex-convergence} is always satisfied and \cite[Theorem 2.3]{deng2016global} applies, which shows that $f(x^{k+1})+g(y^{k+1},z^\star)$ converges to the (globally) optimal solution of that objective function denoted as $x^\dagger$, i.e. $\lim_{k\to\infty}x^k=x^\dagger$ and $\lim_{k\to\infty}y^k=Ax^\dagger$. This also implies that $\lim_{k\to\infty}\Phi^{k+1}=0$. As a result, the second condition below~\prettyref{ln:lazy-z}, i.e., $\Phi^{k+1}\leq\eta^K$, will be met for sufficiently large $k$. In order for $z^\star$ to be updated, we also need the first condition to be satisfied. To this end, we consider two cases. Case I: If $g(Ax^\dagger,z(Ax^\dagger))<g(Ax^\dagger,z^\star)$, then we have: 
\begin{align*}
&\lim_{k\to\infty}f(x^{k+1})+g(Ax^{k+1},z^{k+1})
\numeq{1}f(x^\dagger)+g(Ax^\dagger,z(Ax^\dagger))\\
=&f(x^\dagger)+g(Ax^\dagger,z^\star)+
(g(Ax^\dagger,z(Ax^\dagger))-g(Ax^\dagger,z^\star))\\
\numleq{2}&f(x^\star)+g(Ax^\star,z^\star)+
(g(Ax^\dagger,z(Ax^\dagger))-g(Ax^\dagger,z^\star)).
\end{align*}
In (1) above, we used the fact that $\lim_{k\to\infty}z^{k+1}=\lim_{k\to\infty}z(y^{k+1})=z(y^\dagger)=z(Ax^\dagger)$ because $z(y)$ is a locally Lipschitz function of $y$ by our~\prettyref{ass:bivariable} vi). (2) is due to optimality of $x^\dagger$. We further have:
\begin{align*}
\lim_{k\to\infty}\Lambda^{k+1}=&f(x^\star)+g(Ax^\star,z^\star)+\\
&(1-\eta)(g(Ax^\dagger,z(Ax^\dagger))-g(Ax^\dagger,z^\star)).
\end{align*}
Put together, we know that $\lim_{k\to\infty}f(x^{k+1})+g(Ax^{k+1},z^{k+1})<\lim_{k\to\infty}\Lambda^{k+1}$ so that the first condition below~\prettyref{ln:lazy-z} will be satisfied after finitely many iterations where $z^\star$ will be updated. Case II: If $g(Ax^\dagger,z(Ax^\dagger))=g(Ax^\dagger,z^\star)$, then $z^\star=z(Ax^\dagger)$ due to the uniqueness of $z(Ax^\dagger)$ by our~\prettyref{ass:bivariable} vi) and $z^\star$ might never be updated further. Suppose this is the case, due to the fact that $\lim_{k\to\infty}\Phi^{k+1}=0$, we can easily verify that $\langle x^{k+1},y^{k+1},z^\star\rangle$ is the desired $\epsilon$-stationary solution of~\prettyref{eq:snlp} for sufficiently large $k$.

As our final step, we consider the case where $z^\star$ is updated infinitely many times. In this case, we denote the subsequence of iterations that update $z^k$ as $z^{k(i)}$ with $i=1,2,\cdots$. In other words, we assume that, at every iteration $k(i)$, the two conditions below~\prettyref{ln:lazy-z} are met and we have $z^\star=z^{k(i)+1}=z(y^{k(i)+1})$. The sequence $\{\langle x,y,z\rangle^{k(i)+1}\}$ is bounded so that there exists a convergent subsequence. Without a loss of generality in the following arguments, we can assume the entire sequence $\{\langle x,y,z\rangle^{k(i)+1}\}$ converges to some $\langle x,y,z\rangle^\dagger$. Clearly, we have $z^\dagger=z(y^\dagger)$ because $z(y)$ is a locally Lipschitz function of $y$ by our~\prettyref{ass:bivariable} vi).

We first claim that $\lim_{i\to\infty}g(y^{k(i)+1},z^{k(i)+1})-g(y^{k(i)+1},z^{k(i)})=0$. This is because the sequence $\{f(x^\star)+g(Ax^\star,z^\star)\}$ is monotonically decreasing and it reduces by at least $(1-\eta)(g(y^{k(i)+1},z^{k(i)+1})-g(y^{k(i)+1},z^{k(i)}))$ at the $k(i)$th iteration due to the first condition below~\prettyref{ln:lazy-z}. If $\liminf_{i\to\infty}g(y^{k(i)+1},z^{k(i)+1})-g(y^{k(i)+1},z^{k(i)})<0$, then $f(x^\star)+g(Ax^\star,z^\star)$ would tend to $-\infty$, which contradicts the fact that both $f$ and $g$ are positive functions. We then claim that $\lim_{i\to\infty}z^{k(i)+1}-z^{k(i)}=0$. Indeed, suppose otherwise, there exists some $\epsilon>0$ where $\|z^{k(i)+1}-z^{k(i)}\|>\epsilon$ for infinitely many $i$. We can assume that $z^{k(i)}$ converges (otherwise, choose a convergent subsequence), but with a limit different from $z(y^\dagger)$. But we know by our previous claim that $\lim_{i\to\infty}g(y^{k(i)+1},z^{k(i)+1})-g(y^{k(i)+1},z^{k(i)})=g(y^\dagger,z(y^\dagger))-\lim_{i\to\infty}g(y^\dagger,z^{k(i)})=0$. This contradicts our~\prettyref{ass:bivariable} vi) that $z(y^\dagger)$ is unique. Our second claim immediately implies that $\lim_{i\to\infty}z^{k(i)}=z(y^\dagger)$, which combined with the fact that $\lim_{k\to\infty}\Phi^{k(i)+1}=0$ again leads to $\epsilon$-stationary solution of~\prettyref{eq:snlp} for sufficiently large $k$.
\end{proof}
\begin{remark}
In \prettyref{thm:asymptotic}, we have assumed that the constraint detector below~\prettyref{ln:detector} has never been invoked. In practice, there are finitely many constraints so that the detector will be invoked for at most finitely many times. Further, a well-defined constraint detector only ignores constraint function terms when the value of these terms are smaller than some user-defined threshold. It is not difficult to show asymptotic convergence under this setting.
\end{remark}
\section{Special Cases of~\prettyref{thm:BC-ADMM}}
In this section, we prove several special cases of~\prettyref{thm:BC-ADMM}, adapting it to different application domains. First, we show that two terms $g_1$ and $g_2$ can be summed together and still satisfy~\prettyref{ass:bivariable}.
\begin{proof}[Proof of~\prettyref{cor:summation}]
Define $y=\TWO{y_1}{y_2}$, $z=\TWO{z_1}{z_2}$, and $z(y)=\TWO{z_1(y_1)}{z_2(y_2)}$. Suppose $z_1(y_1)$ and $z_2(y_2)$ are unique global minimizers of $g_1(y_1,\bullet)+\iota_{\mathcal{Z}_1}(\bullet)$ and $g_2(y_2,\bullet)+\iota_{\mathcal{Z}_2}(\bullet)$, then obviously $z(y)$ is the unique global minimizer of $g(y,\bullet)+\iota_\mathcal{Z}(\bullet)$, proving the first part of~\prettyref{ass:bivariable} vi).

Now suppose $\TWO{y}{z}\in\mathcal{S}_\mathcal{Z}(r,M)$, then clearly $\TWO{y_1}{z_1}\in\mathcal{S}_{\mathcal{Z}_1}(r,M)$ and $\TWO{y_2}{z_2}\in\mathcal{S}_{\mathcal{Z}_2}(r,M)$ and by our assumption we have $\TWO{y_1}{z_1(y_1)}\in\mathcal{S}_{\mathcal{Z}_1}(\bar{r}_{z_1y}^{r,M},M)$ and $\TWO{y_2}{z_2(y_2)}\in\mathcal{S}_{\mathcal{Z}_2}(\bar{r}_{z_2y}^{r,M},M)$. Combined, we have $\|\TWO{y}{z}\|\leq\|\TWO{y_1}{z_1}\|+\|\TWO{y_2}{z_2}\|\leq \bar{r}_{z_1y}^{r,M}+\bar{r}_{z_2y}^{r,M}\triangleq\bar{r}_{zy}^{r,M}$. Therefore, we have $\TWO{y}{z(y)}\in\mathcal{S}_\mathcal{Z}(\bar{r}_{zy}^{r,M},M)$, proving the second part of~\prettyref{ass:bivariable} vi).

Finally, suppose $\TWO{y}{z(y)}\in\mathcal{S}_\mathcal{Z}(r,M)$ then $\TWO{y_1}{z_1(y_1)}\in\mathcal{S}_{\mathcal{Z}_1}(r,M)$ and $\TWO{y_2}{z_2(y_2)}\in\mathcal{S}_{\mathcal{Z}_2}(r,M)$. Therefore, $z_1(y_1)$ is $L_{z_1y}^{r,M}$-Lipschitz continuous and $z_2(y_2)$ is $L_{z_2y}^{r,M}$-Lipschitz continuous by our assumption. Our result follows by defining $L_{zy}^{r,M}=L_{z_1y}^{r,M}+L_{z_2y}^{r,M}$.
\end{proof}
Next we show that bi-convex $g$ satisfies our assumption.
\begin{lemma}
\label{lem:zy-case1}
\prettyref{ass:bivariable-case1} implies~\prettyref{ass:bivariable}.
\end{lemma}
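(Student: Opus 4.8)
The plan is to notice that almost all of \prettyref{ass:bivariable} is literally contained in \prettyref{ass:bivariable-case1}, so the entire burden falls on clause vi). Indeed, clauses i), ii), iii) of \prettyref{ass:bivariable} are verbatim the corresponding clauses of \prettyref{ass:bivariable-case1}, and clause iv) of \prettyref{ass:bivariable} (convexity of $g(\bullet,z)$) is the first half of clause iv) of \prettyref{ass:bivariable-case1}. Hence only the $\sigma$-strong convexity of $g(y,\bullet)$ and the convexity/closedness/nonemptiness of $\mathcal{Z}$ remain unused, and they must be converted into the three parts of vi): well-definedness of $z(y)$, the level-set estimate $\TWO{y}{z(y)}\in\mathcal{S}_\mathcal{Z}(\bar{r}_{zy}^{r,M},M)$, and the Lipschitz continuity of $z(\bullet)$.

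Second, I would establish existence, uniqueness, and the level-set bound. Fix $\TWO{y}{z}\in\mathcal{S}_\mathcal{Z}(r,M)$ and set $\phi_y:=g(y,\bullet)+\iota_\mathcal{Z}(\bullet)$. This function is proper (it equals $g(y,z)\le M<\infty$ at the point $z\in\mathcal{Z}$), lower semicontinuous ($g$ is continuous and $\mathcal{Z}$ is closed), and $\sigma$-strongly convex (a $\sigma$-strongly convex summand plus a convex one). A proper lsc $\sigma$-strongly convex function is coercive and attains its infimum at a unique point, which we take as the definition of $z(y)$; since $g(y,\cdot)$ blows up toward the boundary of its (open) domain while $\phi_y(z(y))\le\phi_y(z)<\infty$, in fact $z(y)$ lies in $\dom^\circ(g(y,\cdot))$, so $\nabla_z g(y,z(y))$ is well defined. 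By optimality $g(y,z(y))\le g(y,z)\le M$ with $z(y)\in\mathcal{Z}$, so $\TWO{y}{z(y)}$ stays in the same $M$-sublevel set; and strong convexity at the minimizer gives $\tfrac{\sigma}{2}\|z-z(y)\|^2\le g(y,z)-g(y,z(y))\le M$, so $\|z(y)\|\le\|z\|+\sqrt{2M/\sigma}\le r+\sqrt{2M/\sigma}$ and $\|\TWO{y}{z(y)}\|\le\|y\|+\|z(y)\|\le 2r+\sqrt{2M/\sigma}$. Choosing $\bar{r}_{zy}^{r,M}:=2r+\sqrt{2M/\sigma}$ yields $\TWO{y}{z(y)}\in\mathcal{S}_\mathcal{Z}(\bar{r}_{zy}^{r,M},M)$.

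Third comes the Lipschitz estimate, which I expect to be the main obstacle. For $y,y'$ with $\TWO{y}{z(y)},\TWO{y'}{z(y')}\in\mathcal{S}_\mathcal{Z}(r,M)$, I would subtract the two first-order optimality conditions $\langle\nabla_z g(y,z(y)),w-z(y)\rangle\ge 0$ and $\langle\nabla_z g(y',z(y')),w-z(y')\rangle\ge 0$, tested at $w=z(y')$ and $w=z(y)$ respectively, and combine the result with the $\sigma$-strong monotonicity of $w\mapsto\nabla_z g(y',w)$ to get $\sigma\|z(y)-z(y')\|\le\|\nabla_z g(y',z(y))-\nabla_z g(y,z(y))\|$. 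One then wants to bound the right-hand side by $\|\nabla^2_{zy}g\|\,\|y-y'\|$, the cross-Hessian norm being finite on any compact subset of $\dom(g)$ by the $C^2$-smoothness in clause i). The delicate point is that $g$ is extended-real, so neither $\TWO{y'}{z(y)}$ nor the segment joining it to $\TWO{y}{z(y)}$ need lie in $\dom(g)$; I would resolve this by localization — using continuity of the argmin $y\mapsto z(y)$ (again a consequence of strong convexity plus joint continuity of $g$) to keep, for $y'$ near $y$, both $\TWO{y'}{z(y)}$ and that segment inside a ball around $\TWO{y}{z(y)}$ contained in the open set $\dom(g)$, on which $\nabla^2_{zy}g$ is bounded — and then patch the resulting local Lipschitz constants over the compact set $\{y:\TWO{y}{z(y)}\in\mathcal{S}_\mathcal{Z}(r,M)\}$ into one modulus $L_{zy}^{r,M}$. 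This domain-control is exactly the technical difficulty, and it is also the reason the later \prettyref{lem:zy} must enlarge the relevant sublevel set by the margin $\delta^{r,M,N}$ before invoking clause vi).
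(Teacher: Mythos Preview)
Your proposal is correct and covers all parts of clause vi); the differences from the paper are in execution rather than substance.

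For the radius bound, the paper does not use the quadratic growth inequality at the minimizer $z(y)$ the way you do. Instead it expands strong convexity at the given point $z$, uses a uniform bound $L_{\nabla_z}^{r,M}$ on $\|\nabla_z g\|$ over the compact $\mathcal{S}(r,M)$, and solves the resulting quadratic to obtain $\|z(y)-z\|\le\bigl[\sqrt{2M\sigma+(L_{\nabla_z}^{r,M})^2}+L_{\nabla_z}^{r,M}\bigr]/\sigma$. Your bound $\|z(y)-z\|\le\sqrt{2M/\sigma}$ is both simpler and sharper, since you exploit optimality of $z(y)$ directly (and the nonnegativity of $g$) rather than a gradient estimate.

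For the Lipschitz modulus, the paper sidesteps the domain-control issue you flag by invoking the implicit function theorem for generalized equations \cite[Theorem~2F.7]{dontchev2009implicit} applied to $0\in N_\mathcal{Z}(z)+\nabla_z g(y,z)$, which gives a local single-valued Lipschitz selection with modulus $\|\nabla_{zz}g(y,z(y))^{-1}\nabla_{zy}g(y,z(y))\|\le L_g^{r,M}/\sigma$; the covering-by-compactness patching is then implicit in taking a uniform bound over $\mathcal{S}(r,M)$. Your route via variational-inequality subtraction and strong monotonicity is more elementary and self-contained, at the price of having to argue the localization by hand --- which you do correctly, relying on openness of $\dom(g)$ (a consequence of continuity of $g$ in clause~i)) and continuity of $z(\cdot)$.
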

\begin{proof}
We first reclaim what we need to prove. For $\TWO{y}{z}\in\mathcal{S}_\mathcal{Z}(r,M)$, $g(y,\bullet)+\iota_\mathcal{Z}(\bullet)$ has a unique global minimizer denoted as a function $z(y)$, such that $\TWO{y}{z(y)}\in\mathcal{S}_\mathcal{Z}(\bar{r}_{zy}^{r,M},M)$, in which is an $L_{zy}^{r,M}$-Lipschitz continuous function with $\TWO{r}{M}$-dependent parameters $\bar{r}_{zy}^{r,M}$ and $L_{zy}^{r,M}$. 

The uniqueness of minimizer is due to the $\sigma$-strong convexity of $g$. Next, we know that $\mathcal{S}(r,M)$ is a compact set and $\|\nabla_zg(y,z)\|$ is a continuous function. Therefore, we have $\|\nabla_zg(y,z)\|$ is upper bounded on $\mathcal{S}(r,M)$ with the upper bound denoted as $L_{\nabla_z}^{r,M}$. We thus have the following inequality:
\begin{align*}
M\numgeq{1}&g(y,z)\numgeq{2} g(y,z(y))\\
\numgeq{3}&g(y,z)+\nabla_z g(y,z)^T(z(y)-z)+\frac{\sigma}{2}\|z(y)-z\|^2\\
\geq&g(y,z)-L_{\nabla_z}^{r,M}\|z(y)-z\|+\frac{\sigma}{2}\|z(y)-z\|^2\\
\geq&-L_{\nabla_z}^{r,M}\|z(y)-z\|+\frac{\sigma}{2}\|z(y)-z\|^2,
\end{align*}
where (1) is due to our assumption, (2) is due to optimality, and (3) is due to $\sigma$-strong convexity. Solving the above quadratic inequation and we have:
\begin{align*}
\|z(y)-z\|
\leq\left[\sqrt{2M\sigma+(L_{\nabla_z}^{r,M})^2}+L_{\nabla_z}^{r,M,N}\right]/\sigma
\triangleq\delta_\sigma^{r,M}.
\end{align*}
Therefore, we know that:
\begin{align*}
&\|\TWO{y}{z}-\TWO{y}{z(y)}\|\leq\|\TWO{y}{z}\|+\|z-z(y)\|\\
\leq&r+\left[\sqrt{2M\sigma+(L_{\nabla_z}^{r,M})^2}+L_{\nabla_z}^{r,M,N}\right]/\sigma
\triangleq\bar{r}_{zy}^{r,M}.
\end{align*}

Now suppose $\TWO{y}{z(y)}\in\mathcal{S}_\mathcal{Z}(r,M)$. By the inverse function theorem~\cite[Theorem 2F.7]{dontchev2009implicit}, we know that $0\in N_\mathcal{Z}(z(y))+\nabla_zg(y,z(y))$ where $z(y)$ has a single-valued localization in a neighborhood around $\TWO{y}{z(y)}$ with a Lipschitz modulus upper bounded by:
\begin{align*}
&\|-\nabla_{zz}g(y,z(y))^{-1}\nabla_{zy}g(y,z(y))\|\\
\leq&\|\nabla_{zz}g(y,z(y))^{-1}\|\|\nabla_{zy}g(y,z(y))\|
\numleq{1} L_g^{r,M}/\sigma\triangleq L_{zy}^{r,M},
\end{align*}
where (1) is due to~\prettyref{lem:Lg} and $\sigma$-strong convexity, which is our desired result.
\end{proof}
\begin{proof}[Proof of~\prettyref{thm:BC-ADMM-case1}]
Combine~\prettyref{lem:zy-case1} and~\prettyref{thm:BC-ADMM}.
\end{proof}
We further show that the mass-spring energy $P_{k,l,\underline{l}}(y,z)$ satisfies our~\prettyref{ass:bivariable}.
\begin{lemma}
\label{lem:mass-spring}
$g=P_{k,l,\underline{l}}(y,z)$ and $\mathcal{Z}=\{z|\|z\|=1\}$ satisfies~\prettyref{ass:bivariable}.
\end{lemma}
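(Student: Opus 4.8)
Conditions ii) and iii) of~\prettyref{ass:bivariable} constrain the full objective $f(\bullet)+g(A\bullet,\bullet)$ and are imposed at the problem level (while~\prettyref{cor:summation} lets us superpose many spring terms), so the work is to verify the conditions that concern $g$ and $\mathcal{Z}$ in isolation: the regularity in i), the convexity iv), and the partial-minimizer condition vi). I would first note that~\prettyref{ass:bivariable}, in contrast with~\prettyref{ass:bivariable-case1}, imposes no convexity on $\mathcal{Z}$, so the non-convex unit sphere $\mathcal{Z}=\{z\mid\|z\|=1\}$ is admissible; it is closed and compact, and everything below rests on the closed form $z(y)=(y_i-y_j)/\|y_i-y_j\|$ for the minimizer of $g(y,\bullet)$ over $\mathcal{Z}$.

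For i) and iv) I would argue directly. Writing $y=\TWO{y_i}{y_j}$ and $B=\TWO{I}{-I}$, we have $g(y,z)=\tfrac{k}{2}\|By-zl\|^2-\log_\epsilon\!\big(z^T(By)-\underline{l}\big)$, whose domain $\{(y,z)\mid z^T(y_i-y_j)>\underline{l}\}$ is open. Since the locally supported barrier $-\log_\epsilon$ is, by construction, nonnegative, non-increasing and convex on $\{x>0\}$, at least twice continuously differentiable there, and blows up as $x\to0^+$, it follows that $g$ is twice continuously differentiable on $\dom(g)$ and continuous as an extended-real function on $\mathbb{R}^{m+l}$, which is i); and for fixed $z$, $g(\bullet,z)$ is the sum of the convex quadratic $\tfrac{k}{2}\|By-zl\|^2$ and the composition of the convex $-\log_\epsilon$ with the affine map $y\mapsto z^T(By)-\underline{l}$, hence convex, which is iv).

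The substance is vi). Setting $u=y_i-y_j$ and using $\|z\|=1$ on $\mathcal{Z}$, I would rewrite $g(y,z)=\tfrac{k}{2}(\|u\|^2+l^2)-kl\,z^Tu-\log_\epsilon(z^Tu-\underline{l})$, so that on $\mathcal{Z}$ the function $g(y,\bullet)$ depends only on the scalar $s=z^Tu$ and is \emph{strictly} decreasing in $s$ (the term $-kl\,s$ is strictly decreasing since $k,l>0$, and $-\log_\epsilon(s-\underline{l})$ is non-increasing). For $\TWO{y}{z}\in\mathcal{S}_\mathcal{Z}(r,M)$, finiteness of $g(y,z)$ forces $\underline{l}<z^Tu\le\|u\|$; hence the feasible set $\{z\in\mathcal{Z}\mid z^Tu>\underline{l}\}$ is non-empty, $g(y,\bullet)$ is lower semicontinuous on the compact sphere, and a minimizer exists. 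Because $g$ is strictly decreasing in $s=z^Tu$ and $s$ attains its unique maximum over $\mathcal{Z}$ at $z=u/\|u\|$, the unique global minimizer is $z(y)=(y_i-y_j)/\|y_i-y_j\|$; I would then check that $z(y)^Tu=\|u\|>\underline{l}$, so $z(y)$ lies in the interior of $\dom(g)$, and that substituting it collapses $g$ to $\tilde{P}_{k,l,\underline{l}}(y)$. For the quantitative tail of vi): since $\|z(y)\|=1$, $\|y\|\le r$, and $g(y,z(y))\le g(y,z)\le M$ by optimality, we get $\TWO{y}{z(y)}\in\mathcal{S}_\mathcal{Z}(\bar{r}_{zy}^{r,M},M)$ with $\bar{r}_{zy}^{r,M}=\sqrt{r^2+1}$; and on the set where $\TWO{y}{z(y)}\in\mathcal{S}_\mathcal{Z}(r,M)$, the inequality $-\log_\epsilon(\|u\|-\underline{l})\le\tilde{P}_{k,l,\underline{l}}(y)\le M$ (the quadratic part being nonnegative) together with the blow-up of $-\log_\epsilon$ at $0^+$ produces $\|u\|\ge\underline{l}+\delta_M\ge\delta_M$ for some $\delta_M>0$ depending only on $M$; since $u\mapsto u/\|u\|$ is $(2/\delta_M)$-Lipschitz on $\{\|u\|\ge\delta_M\}$ and $u=By$ is $\sqrt{2}$-Lipschitz in $y$, the map $z(\bullet)$ is $L_{zy}^{r,M}$-Lipschitz there with $L_{zy}^{r,M}=2\sqrt{2}/\delta_M$.

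The only step I expect to need care is extracting the lower bound $\delta_M$ on $\|y_i-y_j\|$ from the $M$-sublevel set of the barrier, and, more pedantically, recording the dependence of $\bar{r}_{zy}^{r,M}$, $\delta_M$ and $L_{zy}^{r,M}$ on $(r,M)$ in the form demanded by~\prettyref{ass:bivariable}; once the closed form $z(y)=(y_i-y_j)/\|y_i-y_j\|$ is established the remaining estimates are elementary.
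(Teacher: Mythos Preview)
Your proposal is correct and follows essentially the same line as the paper: identify the closed form $z(y)=(y_i-y_j)/\|y_i-y_j\|$, check it lies in $\mathcal{S}_\mathcal{Z}(\bar r_{zy}^{r,M},M)$, and establish Lipschitz continuity. Your uniqueness argument via strict monotonicity of $g(y,\bullet)$ in the scalar $s=z^Tu$ is equivalent to, and arguably tidier than, the paper's orthogonal--decomposition calculation $\|u-zl\|^2=\|u\|^2+l^2-2l\cos\theta\,\|u\|$. The one place you work harder than necessary is the Lipschitz step: the paper simply observes that finiteness of the barrier already forces $\|y_i-y_j\|>\underline{l}$, writes the Jacobian $\partial z(y)/\partial y=\bigl(I-z(y)z(y)^T,\ z(y)z(y)^T-I\bigr)/\|y_i-y_j\|$, and bounds its norm by the $(r,M)$-\emph{independent} constant $\sqrt{2}/\underline{l}$, so no extraction of a sublevel-dependent $\delta_M$ is needed.
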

\begin{proof}
Ohter parts being trivial, we only need to show~\prettyref{ass:bivariable} vi). First, we prove that $z(y)=(y_i-y_j)/\|y_i-y_j\|$ is the unique global minimizer. We have for an arbitrary unit $z$:
\begin{align*}
&\|y_i-y_j-zl\|^2\\
=&\|[z(y)z(y)^T](y_i-y_j-zl)\|^2+\\
&\|[I-z(y)z(y)^T](y_i-y_j-zl)\|^2\\
=&\|y_i-y_j-[z(y)z(y)^T]zl\|^2+\\
&\|[I-z(y)z(y)^T]zl\|^2\\
=&\|\|y_i-y_j\|-\cos(\theta)l\|^2+
\|\sin(\theta)l\|^2\\
=&\|y_i-y_j\|^2+l^2-2l\cos(\theta)\|y_i-y_j\|
\geq(\|y_i-y_j\|-l)^2,
\end{align*}
where $\theta$ is the angle between $z(y)$ and $z$. From above formula, we can see that $z(y)$ is the unique optimal solution making $\cos(\theta)=1$. Further, it can be easily verified that $z(y)$ is also the optimal solution to $-\log_\epsilon(n^T(y_i-y_j)-\underline{l})$. Put together, $z(y)$ is the unique optimal solution minimizing $g=P_{k,l,\underline{l}}(y,z)$, proving the first part of~\prettyref{ass:bivariable} vi). 

Suppose $\TWO{y}{z}\in\mathcal{S}_\mathcal{Z}(r,M)$, then we have $\|y\|\leq r$ and $\|z(y)\|\leq1$, so $\|\TWO{y}{z(y)}\|\leq r+1\triangleq\bar{r}_{zy}^{r,M}$, proving the second part of~\prettyref{ass:bivariable} vi). 

From the solution $z(y)$ above, it is differentiable with respect to $y$ with the derivative being:
\begin{align*}
\FPP{z(y)}{y}=\TWO{I-z(y)z(y)^T}{z(y)z(y)^T-I}\frac{1}{\|y_i-y_j\|}.
\end{align*}
The above matrix has a norm upper bound of $\sqrt{2}/\underline{l}\triangleq L_{zy}^{r,M}$, which is our desired Lipschitz modulus. We have thus proved the last part of~\prettyref{ass:bivariable} vi).
\end{proof}

Finally, we show that the ARAP energy with strain limiting $P_F(y,z)$ satisfies our~\prettyref{ass:bivariable}.
\begin{lemma}
\label{lem:sigma-bound}
If $P_{F,\epsilon}(y,z)\leq M$ then there exists some $\epsilon^M>0$ and $\underline{\sigma}^M>0$ such that i) The sphere centered at $y_0$ with radius $\epsilon^M$ is contained in $\CH(y_1,\cdots,y_{d+1})$ and ii) $\sigma_k(F(y))\geq\underline{\sigma}^M$.
\end{lemma}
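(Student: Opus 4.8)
The plan is to read information off the inequality $P_{F,\epsilon}(y,z)\le M$ by exploiting that every summand of $P_{F,\epsilon}$ is nonnegative (note $\log_\epsilon\le 0$ always). This gives at once: $y_0\in\CH(y_1,\dots,y_{d+1})$ since $\iota_{\CH(\cdots)}(y_0)$ is finite; $\sigma_k(F)>0$ for all $k$ since each $\iota_{\sigma_k(F)>0}$ is finite, so $F$ is invertible and the element is nondegenerate; $\|n_i\|\le\sqrt{2M/\sigma}=:\bar n_M$ from $\sum_i\tfrac{\sigma}{2}\|n_i\|^2\le M$; $\|F\|_F\le\sqrt M+\sqrt d=:\bar\sigma_M$ from $\|F-R\|_F^2\le M$ and $\|R\|_F=\sqrt d$; and, for every $i$ and every $j\in\{1,\dots,d+1\}\setminus\{i\}$, $-\log_\epsilon\big(n_i^T(y_j-y_0)\big)\le M$. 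Since $-\log_\epsilon$ is a barrier with $-\log_\epsilon(t)\to+\infty$ as $t\to0^+$ and $-\log_\epsilon(t)=+\infty$ for $t\le0$, the bound $-\log_\epsilon(t)\le M$ forces $t\ge h(M)$ for some $h(M)>0$ depending only on $\epsilon,M$; hence $n_i^T(y_j-y_0)\ge h(M)>0$ for all such $i,j$.

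For part (i) I would set $\epsilon^M:=h(M)/(2\bar n_M)$ and show the closed ball $B(y_0,\epsilon^M)$ lies in $\CH(y_1,\dots,y_{d+1})$. Writing $F_i:=\CH(\{y_j:j\ne i\})$ for the $i$-th facet, any $q=\sum_{j\ne i}\lambda_j y_j\in F_i$ with $\lambda_j\ge0$, $\sum_j\lambda_j=1$ satisfies, upon projecting onto $n_i$, $\|q-y_0\|\ge n_i^T(q-y_0)/\|n_i\|=\big(\sum_j\lambda_j\,n_i^T(y_j-y_0)\big)/\|n_i\|\ge h(M)/\|n_i\|\ge h(M)/\bar n_M=2\epsilon^M$. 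Because the element is nondegenerate, $\partial\CH=\bigcup_i F_i$, so every boundary point is at distance $\ge 2\epsilon^M$ from $y_0$; hence for any $p$ with $\|p-y_0\|\le\epsilon^M$ and any $q\in\partial\CH$ we get $\|p-q\|\ge\epsilon^M>0$, i.e. $B(y_0,\epsilon^M)\cap\partial\CH=\emptyset$. In particular $y_0\notin\partial\CH$, so $y_0\in\mathrm{int}\,\CH$ because $y_0\in\CH$. Since $B(y_0,\epsilon^M)$ is connected, contains the interior point $y_0$, and is disjoint from $\partial\CH$, it must lie in $\mathrm{int}\,\CH\subseteq\CH$, which is (i).

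For part (ii) I would bound the smallest singular value $\sigma_d(F(y))$ (which bounds every $\sigma_k$) via the geometry just established. The FEM deformation gradient is linear in the edge vectors $y_j-y_1$ and satisfies $|\det F(y)|=d!\,\mathrm{vol}_d\!\big(\CH(y_1,\dots,y_{d+1})\big)/|\det D_m|$, with $D_m$ the fixed invertible rest-state edge matrix. By (i), $\CH$ contains $B(y_0,\epsilon^M)$, so $\mathrm{vol}_d(\CH)\ge c_d(\epsilon^M)^d$ with $c_d$ the volume of the unit $d$-ball; therefore $|\det F(y)|\ge c_d d!(\epsilon^M)^d/|\det D_m|>0$. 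Combining with $\sigma_1(F)\le\|F\|_F\le\bar\sigma_M$ gives $\sigma_d(F)=|\det F|/\prod_{k<d}\sigma_k(F)\ge|\det F|/\bar\sigma_M^{\,d-1}\ge c_d d!(\epsilon^M)^d/\big(|\det D_m|\,\bar\sigma_M^{\,d-1}\big)=:\underline\sigma^M>0$, which is (ii).

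The main obstacle is part (i): turning the pointwise barrier/indicator constraints into the uniform inclusion $B(y_0,\epsilon^M)\subseteq\CH$. Two points need care. First, the estimate $\dist(y_0,F_i)\ge h(M)/\|n_i\|$ relies on $q-y_0$ being a \emph{convex} combination of the $y_j-y_0$, so that its $n_i$-component is at least $\min_j n_i^T(y_j-y_0)\ge h(M)$ --- this is exactly where the sign pattern enforced by the barrier terms is used. Second, the step ``$y_0\in\CH$, $y_0\notin\partial\CH$, ball connected $\Rightarrow$ ball $\subseteq\mathrm{int}\,\CH$'' genuinely needs the indicator $\iota_{\CH}(y_0)$ to be finite; the separating-plane constraints alone would only give $y_0\notin F_i$ for each $i$, which does not place $y_0$ inside the simplex. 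Once (i) supplies a uniform lower bound on $\mathrm{vol}_d(\CH)$, part (ii) is routine linear algebra whose only nonstandard ingredient is the standard identity relating $\det F$ to the element volume.
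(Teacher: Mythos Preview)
Your argument is correct and in fact more explicit than the paper's, but the mechanics differ in both parts.

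For part (i), the paper does not make direct quantitative use of the regularization term $\tfrac{\sigma}{2}\|n_i\|^2$. Instead it argues nonconstructively: the separating-plane barrier $-\sum_{j\ne i}\log_\epsilon(n_i^T(y_j-y_0))$, viewed as a function of the configuration, tends to infinity as $\dist(y_0,\CH_i)\to 0$ (this is outsourced to~\cite{liang2024second}), so a level-$M$ bound forces a positive lower bound on each facet distance. Your route is fully self-contained: you extract $n_i^T(y_j-y_0)\ge h(M)$ from the single-term barrier bound, pull the explicit estimate $\|n_i\|\le\sqrt{2M/\sigma}$ out of the regularizer, and convert the half-space margin into a Euclidean one by projecting onto $n_i/\|n_i\|$. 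This is cleaner and yields an explicit $\epsilon^M$; the connectedness argument to pass from ``ball misses $\partial\CH$'' to ``ball lies in $\CH$'' is correctly supported by the indicator term and the nondegeneracy coming from $\sigma_k(F)>0$.

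For part (ii), the paper takes a different linear-algebra route that avoids the ARAP bound $\|F-R\|_F^2\le M$ altogether. Writing $F(y)=X(y)[X^r]^{-1}$ with $X(y)=(y_2-y_1,\dots,y_{d+1}-y_1)$, it observes that every point $y_0+q$ of the contained ball has barycentric coordinates in $[0,1]$, so $q=X(y)v$ with $\|v\|$ uniformly bounded; this directly gives $\sigma_{\min}(X(y))\gtrsim\epsilon^M$ and hence $\sigma_{\min}(F)$ is bounded below via the fixed rest-state matrix. Your argument instead bounds $|\det F|$ from the contained volume and then invokes $\|F\|_F\le\sqrt M+\sqrt d$ to control the larger singular values, so that $\sigma_d=|\det F|/\prod_{k<d}\sigma_k$ is bounded below. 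Both work; yours trades one extra ingredient (the ARAP term) for a completely elementary determinant/volume identity, while the paper's route uses one fewer hypothesis but leans on a slightly more delicate barycentric-coordinate estimate.
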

\begin{proof}
i) First note that all the terms in $P_{F,\epsilon}$ are positive, so each of them is smaller than $M$. Let us define $\CH_i=\CH(\{y_j|0<j\neq i\})$ as the face of the simplex element opposite to vertex $y_i$. We can claim that there exists some $\epsilon^M>0$ such that $\dist(y_0,\CH_i)\geq\epsilon^M$. Because otherwise, the term $-\sum_{0<j\neq i}\log_\epsilon(n_i^T(y_j-y_0))=\infty$ models a separating plane between $y_0$ and $\CH_i$, which is continuous and tends to infinity as $\dist(y_0,\CH_i)\to0$ according to~\cite{liang2024second}, contradicting the fact that $P_{F,\epsilon}(y,z)\leq M$. We further have $y_0\in\CH(y_1,\cdots,y_{d+1})$ because $0=\iota_{\CH(y_1,\cdots,y_{d+1})}(y_0)\leq M$. This means the $d$-dimensional sphere centered at $y_0$ with radius $\epsilon^M$ is entirely contained the simplex element.

ii) We recall the following formula for calculating the deformation gradient $F(y)$:
\begin{align*}
&X^r=\THREE{y_2^r-y_1^r}{\cdots}{y_{d+1}^r-y_1^r}\\
&X(y)=\THREE{y_2-y_1}{\cdots}{y_{d+1}-y_1}\\
&F(y)=X(y)[X^r]^{-1},
\end{align*}
where we use the superscript $r$ to denote the rest pose. We know that $0<\underline{\sigma}^r\leq\sigma_k(X^r)$ for some $0<\underline{\sigma}^r$, i.e. $X^r$ is non-singular by definition. By the barycentric coordinate interpolation formula, we know that for any $q\in\mathbb{R}^d$ with $\|q\|=\epsilon^M$, we have the barycentric coordinates $0\leq\omega(q)\leq1$, such that $y_0+q=X(y)\omega(q)$. Now since $\iota_{\CH{y_1,\cdots,d+1}}(y_0)=0\leq M$ by assumption, $y_0$ also has the barycentric coordinates, such that $y_0=X(y)\omega(y_0)$. Subtracting the two equations and we have $\|q\|=\|X(y)[\omega(q)-\omega(y_0)]\|$, which immediately implies that $\sigma_k(X(y))\geq\epsilon^M/d$. Combined with the fact that $\sigma_k(X^r)\geq\underline{\sigma}^r$, we conclude that $\sigma_k(F(y))\geq\epsilon^M/(d\underline{\sigma}^r)=\underline{\sigma}^M$, which is our desired result.
\end{proof}
\begin{lemma}
\label{lem:ARAP}
$g=P_{F,\epsilon}(y,z)$ and $\mathcal{Z}=\prod_{i=1}^{d+1}\{n_i|\|n_i\|\leq1\}\times\text{SO}(3)$ satisfies~\prettyref{ass:bivariable}.
\end{lemma}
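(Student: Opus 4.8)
I would proceed exactly as in the proof of \prettyref{lem:mass-spring}: the parts of \prettyref{ass:bivariable} other than (iv) and (vi) are immediate or concern $(f,A)$, so the work is in (iv) and (vi), and the single enabling tool is \prettyref{lem:sigma-bound}. \emph{For (i) and (iv)}, the first observation I would make is that on $\dom(g)$ the two indicator terms vanish: the strict separating-plane inequalities $n_i^T(y_j-y_0)>0$ together with $\iota_{\CH(y_1,\dots,y_{d+1})}(y_0)$ confine $y_0$ to the interior of a non-degenerate simplex, which keeps $\sigma_k(F(y))>0$. Hence on $\dom(g)$, $g(y,z)=\|F(y)-R\|_F^2+\sum_i\frac{\sigma}{2}\|n_i\|^2-\sum_i\sum_{0<j\neq i}\log_\epsilon(n_i^T(y_j-y_0))$, which is $C^2$; and $g(\bullet,z)$ is convex because $F$ is affine in $y$ (so $\|F(y)-R\|_F^2$ is a convex quadratic), $-\log_\epsilon$ is convex and $C^2$ and blows up at $0^+$ (so each $-\log_\epsilon(n_i^T(y_j-y_0))$ is convex), and the $\frac{\sigma}{2}\|n_i\|^2$ are constant in $y$. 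The one step I would have to be careful about here is convexity of the effective domain $\{y:n_i^T(y_j-y_0)>0\ \forall i,j\}\cap\{y_0\in\CH(y_1,\dots,y_{d+1})\}$ --- the $\iota_{\CH}$ term is genuinely needed (the plane inequalities alone do not force $y_0$ inside) --- which I would establish following \cite{liang2024second}, using that those strict inequalities force the barycentric weights of $y_0$ to be strictly positive.

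\emph{For (vi)}, I would fix $(y,z)$ with $g(y,z)\le M$ and first identify the minimizer $z(y)=\argmin{z\in\mathcal{Z}}g(y,z)$. Since the problem decouples over $R$ and the $n_i$ (only $\|F(y)-R\|_F^2$ depends on $R$, and the remaining $z$-dependent terms are separate over the $n_i$), the $R$-block is the polar-decomposition rotation factor $R(F(y))$ --- well-defined and unique because $\sigma_k(F(y))\ge\underline{\sigma}^M>0$ by \prettyref{lem:sigma-bound} and $\det F(y)>0$ (see obstacle below) --- and each $n_i$-block minimizes a $\sigma$-strongly convex function over the compact ball $\{\|n_i\|\le1\}$; feasibility of $(y,z)$ makes the ball meet the domain, and the barrier terms blow up at the domain boundary, so the minimizer $n_i(y)$ exists and is unique. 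Then $z(y)=(n_1(y),\dots,n_{d+1}(y),R(F(y)))$ is unique; since $\|z(y)\|\le\sqrt{2d+1}$ a priori, $\|(y,z(y))\|\le\|y\|+\sqrt{2d+1}\le r+\sqrt{2d+1}\triangleq\bar{r}_{zy}^{r,M}$, and $g(y,z(y))\le g(y,z)\le M$ by optimality, giving $(y,z(y))\in\mathcal{S}_\mathcal{Z}(\bar{r}_{zy}^{r,M},M)$.

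Next I would prove that $z(y)$ is Lipschitz on $\{(y,z(y))\in\mathcal{S}_\mathcal{Z}(r,M)\}$, a compact set. For the rotation part, $F$ is affine in $y$ and, by \prettyref{lem:sigma-bound}, $F(y)$ stays in the open set of matrices with $\sigma_k\ge\underline{\sigma}^M$ (and fixed orientation), on which the polar rotation factor is a smooth function of $F$; composing with $y\mapsto F(y)$ and restricting to the compact set yields a uniform modulus $L_R$. For each $n_i(y)$, by \prettyref{lem:sigma-bound} i) (equivalently, $\log_\epsilon\ge-M$ forces its argument away from $0$) the quantities $n_i^T(y_j-y_0)$ are bounded below by a positive constant there, so the $n_i$-subproblem has $C^2$ data with bounded first and second derivatives and is $\sigma$-strongly convex; I would then invoke the same sensitivity argument as in \prettyref{lem:zy-case1} --- the inverse/implicit function theorem, \cite[Theorem 2F.7]{dontchev2009implicit} --- to get that $n_i(\bullet)$ is locally Lipschitz with a modulus uniform over the compact set. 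Setting $L_{zy}^{r,M}=L_R+\sum_{i=1}^{d+1}L_{n_i}$ finishes (vi), and \prettyref{thm:BC-ADMM} then gives the conclusion.

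\emph{Hardest part.} I expect the two obstacles to be: (a) convexity of $\dom(g(\bullet,z))$ --- one must unpack how the $\iota_{\CH}$ constraint interacts with the plane inequalities, following \cite{liang2024second}; and (b) the orientation in the polar decomposition --- the $\text{SO}(3)$-projection of $F$ is unique only when $\det F>0$ (or when the two smallest singular values are strictly separated) and smooth only away from the singular matrices. Both are resolved by \prettyref{lem:sigma-bound} together with the orientation-preserving convention for the ARAP element; so the real crux is simply to deploy the uniform bounds $\underline{\sigma}^M$ and $\epsilon^M$ of \prettyref{lem:sigma-bound} at these two points, after which everything parallels \prettyref{lem:mass-spring} and \prettyref{lem:zy-case1}.
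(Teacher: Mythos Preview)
Your treatment of part (vi) is essentially the same as the paper's: decouple the $z$-minimization into the $R$-block (polar/Procrustes) and the $n_i$-blocks (strongly convex over a ball), use \prettyref{lem:sigma-bound} for well-definedness and bounds, and get Lipschitz continuity of $R(F(y))$ from the singular-value lower bound and of each $n_i(y)$ from the implicit-function argument of \prettyref{lem:zy-case1}. The paper does the $R$-Lipschitz step via the explicit Araki inequality for $R(F)^TF$ rather than your compactness-plus-smoothness argument, but both are fine.

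\textbf{The genuine gap is in part (i).} You argue only that $g$ is $C^2$ on $\dom(g)$, but \prettyref{ass:bivariable} i) also requires that $g$ be \emph{continuous as an extended-real function on all of $\mathbb{R}^{m+l}$} (cf.\ Remark~2 in the paper, which stresses that both properties are essential, and gives $\iota_{\{x>0\}}$ as a counterexample). The indicator terms $\iota_{\CH(y_1,\dots,y_{d+1})}(y_0)$ and $\sum_k\iota_{\sigma_k(F)>0}(F)$ are each discontinuous in isolation, and you never show that their discontinuities are ``masked'' by the barrier terms so that the total $P_{F,\epsilon}$ is continuous. This is not automatic: one must check, at every boundary point of the indicators' support, that some other term of $P_{F,\epsilon}$ already blows up. The paper devotes roughly half of its proof to exactly this, via a case analysis (e.g., if $y_0^\star\in\partial\CH(y_1,\dots,y_{d+1})$ then the separating-plane barrier term diverges by \prettyref{lem:sigma-bound}~i); if $\sigma_k(F(y^\star))=0$ one reduces to the previous case, etc.). Your plan does not mention this at all, so as written it does not establish (i).

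A secondary concern: your proposed route to convexity of the effective $y$-domain is sketchy. The set $\{y:y_0\in\CH(y_1,\dots,y_{d+1})\}$ is not convex by itself (in one dimension it is the union of two orderings), and the orientation constraint $\sigma_k(F)>0$ is needed to cut it down to a convex piece; your appeal to \cite{liang2024second} and ``barycentric weights strictly positive'' does not make this reduction explicit. The paper simply asserts bi-convexity in the main text without proof, so this is not something its proof of the lemma addresses either, but since you flag it as a ``hardest part'' you should be aware that your stated resolution is incomplete.
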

\begin{proof}
We start by showing~\prettyref{ass:bivariable} vi). First note that when optimizing $z=\FOUR{n_1}{\cdots}{n_{d+1}}{R}$, the $n_i$-subproblems and $R$-subproblem are separable. Suppose $\TWO{y}{z(y)}\in\mathcal{S}_\mathcal{Z}(r,M)$, then we have the analytic solution that $R=R(F)$ because $R$ solves the orthogonal Procrustes problem~\cite{sorkine2007rigid}. The uniqueness of $R$ follows from the uniqueness of polar decomposition. The uniqueness of $n_i$ follows from the $\sigma$-strong convexity.

Now suppose $\TWO{y}{z}\in\mathcal{S}_\mathcal{Z}(r,M)$, then we have $\|y\|\leq r$ and $\|z(y)\|\leq d+1+9$, so $\|\TWO{y}{z(y)}\|\leq r+d+10\triangleq\bar{r}_{zy}^{r,M}$, proving the second part of~\prettyref{ass:bivariable} vi). 

We still need to show that $z(y)=\TWO{n_i}{R(F)}$ is Lipschitz continuous when $\TWO{y}{z(y)}\in(r,M)$. We only need to show that each $n_i$ and $R(F)$ is Lipschitz continuous with respect to $y$ and sum up the Lipschitz modulus to yield $L_{zy}^{r,M}$. The Lipschitz continuity of $n_i$ follows by the same argument as in~\prettyref{lem:zy-case1}. From~\prettyref{lem:sigma-bound} ii), we know that $\sigma_k(F(y))\geq\underline{\sigma}^M>0$. It is known by~\cite{araki1981inequality} that $R(F)^TF$ is a Lipschitz continuous function of $F$ with a modulus of $\sqrt{2}$. As a result, we have the following inequality:
\begin{align*}
&\|R(F(y))^TF(y)-R(F(y'))^TF(y')\|_F\\
\leq&\sqrt{2}\|F(y)-F(y')\|_F.
\end{align*}
We then have the following inequality:
\begin{align*}
&\|R(F(y))^T-R(F(y'))^T\|_F\\
=&\|(R(F(y))^TF(y)-R(F(y'))^TF(y))F(y)^{-1}\|_F\\
=&\|(R(F(y))^TF(y)-R(F(y'))^TF(y')+\\
&R(F(y'))^TF(y')-R(F(y'))^TF(y))F(y)^{-1}\|_F\\
\leq&\|(R(F(y))^TF(y)-R(F(y'))^TF(y')\|_F\|F(y)^{-1}\|_F+\\
&\|R(F(y'))\|_F\|F(y')-F(y)\|_F\|F(y)^{-1}\|_F\\
\leq&(\sqrt{2}+\sqrt{3})\|F(y)^{-1}\|_F\|F(y')-F(y)\|_F\\
\leq&(\sqrt{2}+\sqrt{3})\sqrt{\sum_{k=1}^d\sigma_k(F(y))^{-2}}\|F(y')-F(y)\|_F\\
\leq&(\sqrt{2}+\sqrt{3})\sqrt{\sum_{k=1}^d(\underline{\sigma}^M)^{-2}}\|F(y')-F(y)\|_F,
\end{align*}
which shows that $R=R(F)$ is a Lipschitz-continuous function of $F$. We have thus proved the last part of~\prettyref{ass:bivariable} vi).

Finally, we turn to~\prettyref{ass:bivariable} i) and show that $P_{F,\epsilon}(y,z)$ is continuous in its entire domain, even though the indicator function terms $\iota_{\CH(y_1,\cdots,y_{d+1})}(y_0)$ and $\sum_{i=1}^3\iota_{\sigma_k(F)>0}(F(y))$ are not continuous in general. Let us suppose there is sequence $\TWO{y}{z}\to\TWO{y^\star}{z^\star}$. We first show that the term $\iota_{\CH(y_1,\cdots,y_{d+1})}(y_0)$ does not hinder continuity. In Case I, we assume $\iota_{\CH(y_1,\cdots,y_{d+1})}(y_0^\star)=\infty$ then since $\CH(y_1,\cdots,y_{d+1})$ is a closed set, there is a neighborhood of $\TWO{y^\star}{z^\star}$ in which $\iota_{\CH(y_1,\cdots,y_{d+1})}(y_0)=\infty$. In Case II, we assume $\iota_{\CH(y_1,\cdots,y_{d+1})}(y_0^\star)=0$, then there are two sub-cases. In Case II.a, we assume $y_0^\star\in\CH^\circ(y_1,\cdots,y_{d+1})$, then there is a neighborhood in which $\iota_{\CH(y_1,\cdots,y_{d+1})}(y_0)=0$. In Case II.b, we assume $y_0^\star\in\partial\CH(y_1,\cdots,y_{d+1})$, then by~\prettyref{lem:sigma-bound} i), we know that $P_{F,\epsilon}(y^\star,z^\star)=\infty$ because the plane separating term $-\sum_{0<j\neq i}\log_\epsilon((n_i^\star)^T(y_j^\star-y_0^\star))=\infty$ for some $i$. And since this term is continuous, we have $\lim_{y,z}P_{F,\epsilon}(y,z)=\infty$.

Next, we show that the term $\sum_{i=1}^3\iota_{\sigma_k(F)>0}(F(y))$ does not hinder continuity. Again, we consider two cases. In Case I, we assume $\sum_{i=1}^3\iota_{\sigma_k(F)>0}(F(y^\star))=0$, then since singular values are continuous functions of the matrix, we have a neighborhood in which $\sum_{i=1}^3\iota_{\sigma_k(F)>0}(F(y))=0$. In Case II, we assume $\sigma_k(F(y^\star))<0$ for some $k$, then there is a neighborhood in which $\sum_{i=1}^3\iota_{\sigma_k(F)>0}(F(y))=\infty$. In Case III, we have $\sigma_k(F(y^\star))=0$ but $\sigma_k(F(y))>0$ for every $\TWO{y}{z}$, then again we consider two sub-cases. In Case III.a, we assume $\iota_{\CH(y_1,\cdots,y_{d+1})}(y_0^\star)=\infty$, then by the argument in the previous section, we have $\lim_{y,z}P_{F,\epsilon}(y,z)=\infty$. In Case III.b, we assume $\iota_{\CH(y_1,\cdots,y_{d+1})}(y_0^\star)=0$, then as argued in the previous section, we can again consider two sub-cases. In Case III.b1, $y_0^\star\in\CH^\circ(y_1,\cdots,y_{d+1})$ and there is a neighborhood in which $\iota_{\CH(y_1,\cdots,y_{d+1})}(y_0)=0$. But this is not possible by~\prettyref{lem:sigma-bound} ii). In Case III.b2, $y_0^\star\in\partial\CH(y_1,\cdots,y_{d+1})$ then we again have $\lim_{y,z}P_{F,\epsilon}(y,z)=\infty$ as argued above. Thus all is proved.
\end{proof}
\fi

\end{document}